\newtheorem{theo}{Theorem}[section]
\newtheorem{lem}[theo]{Lemma}
\newtheorem{prop}[theo]{Proposition}
\theoremstyle{remark} \newtheorem{remark}[theo]{Remark}%[section]
\newtheorem{defn}[theo]{Definition}
\def\RR{\mathbb{R}}
\def\CC{\mathbb{C}}
\def\NN{\mathbb{N}}
\def\supp{\operatorname{supp}}
\def\ang#1{{\langle #1 \rangle }}
\def\eps{\epsilon}
\def\diag{\operatorname{{diag}}}
\def\tE{\tilde E}
\renewcommand\Re{\operatorname{Re}}
\renewcommand\Im{\operatorname{Im}}
\def\Id{\operatorname{Id}}
\def\ang#1{{\langle #1 \rangle }}
\def\bang#1{{\big\langle #1 \big\rangle }}
\newcommand{\norm}[1]{\left|\!\left|{#1}\right|\!\right|}
\newcommand\spec{{\operatorname{spec}}}
\newcommand\umin{u_{\operatorname{min}}}
\newcommand\tttm{\tilde t_h}
\newcommand\Deltab{\Delta_{\partial \Omega}}
\newcommand\Omegabar{\overline{\Omega}}
\newcommand\dOmega{\partial \Omega}
\newcommand\dbar{\overline{\partial}}
\newcommand\Cd{C_{\delta}(\partial \Omega)}
\newcommand\gd{g_{\partial}}
\newcommand\ud{u^{\partial\Omega}}
\newcommand\pO{{\partial\Omega}}
\newcommand{\be}{\begin{equation}}
\newcommand{\ee}{\end{equation}}
\newcommand{\tbox}[1]{{\mbox{\rm \tiny #1}}}
\DeclareMathOperator{\Span}{span}
\newcommand{\eclas}{\varepsilon_\tbox{clas}}    % errors
\newcommand{\enew}{\varepsilon_\tbox{new}}
\theoremstyle{plain}
\begin{document}
\title[Neumann eigenfunctions at the boundary] {Comparable upper and lower bounds for boundary values of Neumann eigenfunctions and tight inclusion of eigenvalues}

\author{Alex H. Barnett}
\address{Department of Mathematics, Dartmouth College, Hanover, NH, 03755, USA}
\email{ahb@math.dartmouth.edu}
\author{Andrew Hassell} %
\address{Department of Mathematics, Australian National University, Canberra ACT 0200, AUSTRALIA}
\email{Andrew.Hassell@anu.edu.au}

\author{Melissa Tacy} %
\address{Department of Mathematics, Adelaide University, Adelaide, SA 5005, AUSTRALIA}

\email{melissa.tacy@adelaide.edu.au}

\keywords{Neumann eigenfunctions, eigenfunction estimates, boundary values, spectral weight, quasi-orthogonality, inclusion bounds}

\begin{abstract}
For smooth bounded domains in $\RR^n$,
we prove upper and lower $L^2$ bounds
on the boundary data of Neumann eigenfunctions,
and prove quasi-orthogon\-ality of this boundary data in a spectral window.
The bounds are tight in the sense that both are independent of eigenvalue;
this is achieved by working with 
an appropriate norm for boundary functions, which includes a `spectral weight', that is, a
function of the boundary Laplacian. This spectral weight is chosen to cancel concentration
at the boundary that can happen for `whispering gallery' type eigenfunctions. 
These bounds are closely related to wave equation estimates due to Tataru.

Using this,
we bound the distance from an arbitrary Helmholtz parameter $E>0$ to the nearest Neumann eigenvalue, in terms of boundary normal-derivative data of a trial function $u$ solving the Helmholtz equation $(\Delta-E)u=0$.
This `inclusion bound' improves over previously known bounds by a factor of $E^{5/6}$.
It is analogous to a recently improved inclusion bound in the Dirichlet case, due to the first two authors. 

Finally, we apply our theory to present an improved numerical
implementation of the method of particular solutions
for computation of Neumann eigenpairs on smooth planar domains. 
We show that the new inclusion bound improves the relative accuracy in 
a computed Neumann eigenvalue (around the $42000$th)
from 9 digits to 14 digits, with little extra effort.

%\
%
%For smooth bounded domains,
%we prove upper and lower $L^2$ bounds
%on the boundary data of Neumann eigenfunctions,
%and prove quasi-orthogonality of this boundary data in a spectral window.
%The bounds are tight in the sense that both are independent of eigenvalue;
%this is achieved by working in % constructing ?
%an appropriate boundary norm.
%%
%This leads to a numerical analysis application:
%we give inclusion bounds on the distance from the Helmholtz parameter $E$ to the nearest Neumann eigenvalue, in terms of boundary normal-derivative data of a trial function $u$ solving the Helmholtz equation $(\Delta-E)u=0$.
%These improve over previously known bounds by a factor of $E^{5/6}$.
%%
%The results are analogous to recently improved results in the Dirichlet case, due to the first two authors. However, the norm now involves a carefully-chosen function of the surface Laplacian, in which powers of $h^{1/3}$ play a key role, where $h = E^{-1/2}$ is the semiclassical parameter.
%%
%We present an improved implementation of the method of particular solutions
%for smooth planar domains,
%showing that the new inclusion bound improves the
%computed error from 9 digits to 14 digits with little extra effort,
%for a Neumann eigenvalue around the 42000th.

\end{abstract}

\thanks{Research of A. B. was partially supported by the NSF on grants DMS-0811005 and DMS-1216656, and
research of A. H.  was supported by the Australian Research Council
through grants FT0990895, DP1095448, DP120102019 and DP150102419.
A. B. acknowledges the hospitality of the Mathematical Sciences, ANU, in 2013 and 2015,
and A. H. and M. T. acknowledge the hospitality of Dartmouth College during a
visit in 2010.}

\subjclass[2010]{35J67, 35J05, 58J50, 65N25}

\maketitle

\tableofcontents

\section{Introduction}
In this paper we consider Laplace eigenfunctions on a smooth bounded domain $\Omega \subset \RR^n$. As is well known, the positive Laplacian\footnote{Note that our sign convention is opposite to that of \cite{bnds}.},
$$
\Delta = - \sum_{i=1}^n \frac{\partial^2}{\partial x_i^2},
$$
with domain either $H^2(\Omega) \cap H^1_0(\Omega)$ or $$
\{ u \in H^2(\Omega) \mid d_n u |_{\partial \Omega} = 0 \}
$$
is self-adjoint. Here and below, $d_n$ denotes the directional derivative with respect to the inward unit normal vector at the boundary, $\partial \Omega$. These are known as the Laplacian with Dirichlet, resp.\ Neumann, boundary conditions and will be denoted $\Delta^D$, resp.\ $\Delta^N$. In either case, there is an orthonormal basis $u_j$ of $L^2(\Omega)$ consisting of real eigenfunctions, with eigenvalues $E_j \to \infty$.
It will usually be clear from context whether we are considering Dirichlet or Neumann eigendata, but when necessary we will write $u^D_j$, resp.\ $u^N_j$ for Dirichlet, resp.\ Neumann eigenfunctions, and similarly $E^D_j$, resp.\ $E^N_j$.
This paper presents new results for the Neumann case (two such eigenfunctions are shown in Fig.~\ref{f:intro}(a--b)).
Recall that in acoustics applications,
the wavenumbers $\sqrt{E_j^N}$ can be interpreted as resonant frequencies of a closed sound-hard cavity.
The myriad other applications and properties of %Neumann and Dirichlet
Laplace eigenfunctions and eigenvalues are reviewed in \cite{KS,grebenkov},
while numerical solution methods are reviewed in \cite{KS,ungerbook,sca}.

\begin{figure}[t] % fffffffffffffffffffffffffffffffffffffffffffffffffffffffff
\mbox{\hspace{-3ex}%
\raisebox{-.7in}{\includegraphics[width=1.2in]{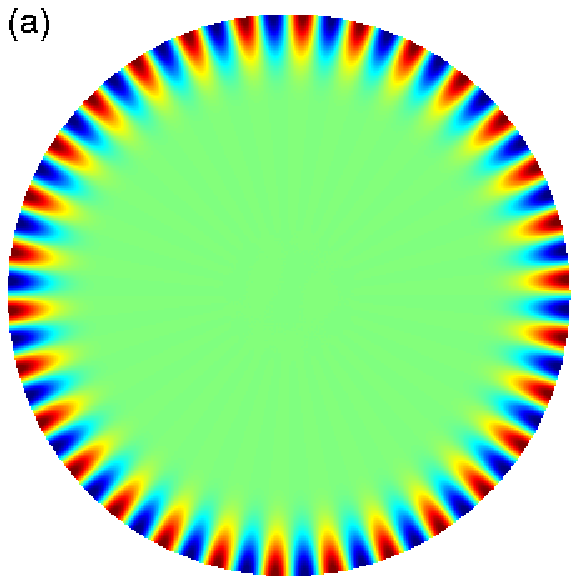}}
\raisebox{-1in}{\includegraphics[width=1.7in]{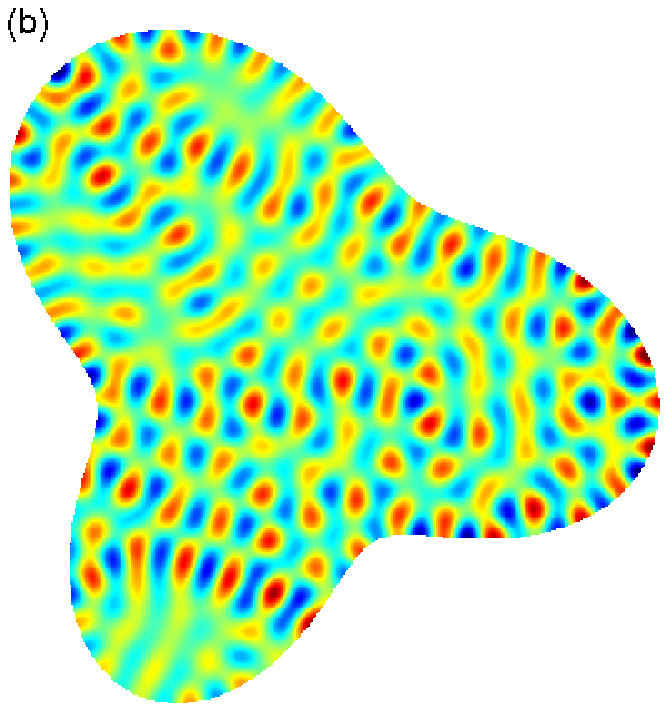}}
\raisebox{-1.3in}{\includegraphics[width=2.4in]{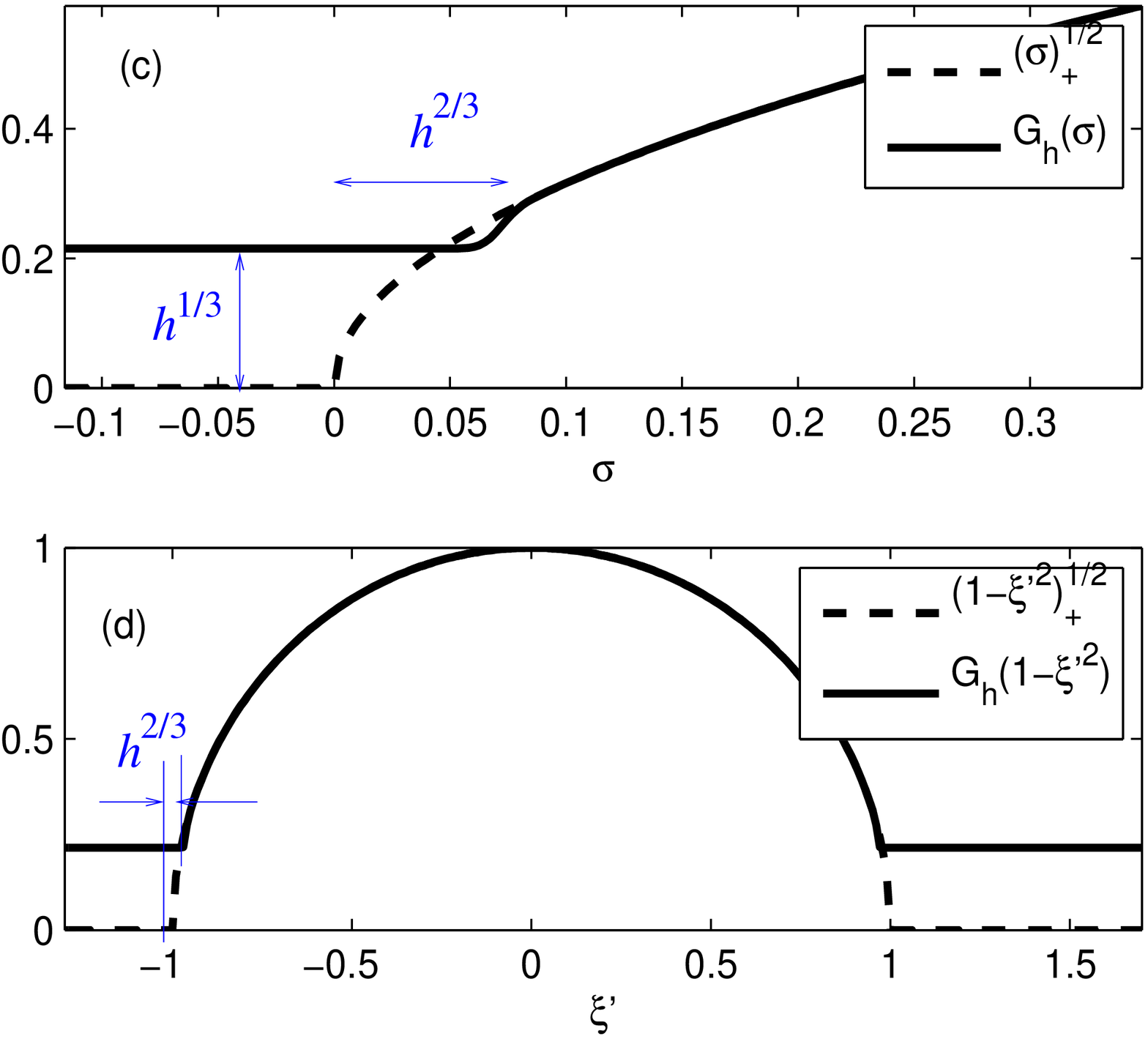}}
}%
\caption{(a) An example Neumann disc eigenmode
$u = v_{30,1}$ of the form with maximal boundary norm $\|u\|_{L^2(\partial\Omega)}$,
with eigenfrequency $\sqrt{E} =\mu_{30,1} = 32.534223556790\cdots$.
(b) Neumann eigenmode of generic nonsymmetric smooth planar domain
with eigenfrequency $\sqrt{E_j} = 40.512821995008\cdots$.
In (a) and (b), red is positive, green zero, and blue negative.
(c) Smooth regularized positive square-root function $G_h(\sigma)$
defined in \eqref{Gh}, showing $h$ scalings.
(d) The spectral weight $G_h(1-\xi'^2)$ vs transverse scaled
frequency $\xi'\in\mathbb{R}$ in the two-dimensional case.
\label{f:intro}
}
\end{figure}

\subsection{Heuristic discussion}

The equation $(\Delta - E) u = 0$ on $\Omega$ is known as the Helmholtz equation. We will refer to a solution (with no prescribed behaviour at the boundary) as a Helmholtz function of energy $E$, or frequency $\sqrt{E}$. If a Helmholtz function additionally satisfies the boundary condition $u |_{\dOmega} = 0$, resp.\ $d_n u |_{\dOmega} = 0$, then it is a Dirichlet, resp.\ Neumann eigenfunction of eigenvalue $E$. We write inverse frequency $h_j = E_j^{-1/2}$, to conform with semiclassical notation that will be used in the body of the paper. 
%We will use semiclassical notation throughout this paper, so the Helmholtz equation will be written $(h^2 \Delta - 1)u = 0$. Such a function (with no prescribed behaviour at the boundary) is called a Helmholtz function at the scale (or wavelength) $h$, or alternatively at frequency (wavenumber) $h^{-1}$. If a Helmholtz function additionally satisfies the boundary condition $u |_{\dOmega} = 0$, resp. $d_n u |_{\dOmega} = 0$, then it is a Dirichlet, resp. Neumann eigenfunction of eigenvalue $h^{-2}$. We write $h_j = E_j^{-1/2}$. 
We will denote the spectrum of $\Delta^D$, resp.\ $\Delta^N$ by $\spec^D$, resp.\ $\spec^N$.
Also, we denote the restriction of $u \in C(\overline{\Omega})$ to the boundary by $\ud$. 

It is well known that the normal derivatives $d_n u^D_j$ of Dirichlet eigenfunctions satisfy the following upper and lower estimates:
\begin{equation}
c \sqrt{E^D_j}  \leq\| d_n u^D_j \|_{L^2(\dOmega)} \leq C\sqrt{E^D_j} ,
\label{Dir-bound}\end{equation}
where $c, C$ are independent of $j$. (Here and below, all constants will be independent of the eigenvalue.) The estimates \eqref{Dir-bound} are straightforward to prove using Rellich identities; see \cite{HT}. In \cite{bnds} the first two authors proved a strengthened version of this inequality, and used it to prove a \emph{Dirichlet inclusion bound} for Helmholtz functions. Let us explain what this means. 
Let $u$ be a smooth function on $\Omega$, continuous on the closure of $\Omega$, and not identically zero. Then we define the \emph{tension} $t[u]$ by
\begin{equation}
t[u] = \frac{\| u \|_{L^2(\dOmega)}}{\| u \|_{L^2(\Omega)}}.
\label{t-def}\end{equation}
(For notational simplicity we write $\| u \|_{L^2(\dOmega)}$ instead of $\| \ud \|_{L^2(\dOmega)}$, etc.) 
Barnett \cite{incl}, followed by the first two authors
\cite{bnds}, proved the following bound, termed an inclusion bound as it specifies an interval around $E$ that includes a point of $\spec^D$.

\begin{theo}\label{MPS-Dir} There exist positive constants $c, C$ depending only on $\Omega$ such that the following holds.  Let $u$ be any nonzero solution of $(\Delta - E) u = 0$ in $C^\infty(\Omegabar)$, and let $\umin$ be the Helmholtz solution minimizing $t[u]$ (such a minimizer exists --- see \cite[Lemma 3.1]{bnds}). Then
$$
c \sqrt{E}  t[\umin] \leq d(E, \spec^D) \leq C \sqrt{E} t[u].
$$
\end{theo}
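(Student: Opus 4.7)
The plan is to expand $u$ in the $L^2(\Omega)$-orthonormal Dirichlet eigenbasis $\{u_j^D\}$. Pairing $u$ against $u_j^D$ via Green's identity, and using $u_j^D|_{\dOmega} = 0$, produces the master formula
$$
\|u\|_{L^2(\Omega)}^2 \;=\; \sum_j \frac{\big|\langle u, d_n u_j^D \rangle_{L^2(\dOmega)}\big|^2}{(E - E_j^D)^2}.
$$
Set $\delta := d(E, \spec^D)$. The classical bound $\|d_n u_j^D\|_{L^2(\dOmega)} \leq C\sqrt{E_j^D}$ from \eqref{Dir-bound} is too weak to estimate this sum term by term, because a spectral window $|E - E_j^D| \leq \sqrt{E}$ contains $\sim E^{(n-1)/2}$ eigenvalues by Weyl's law. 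The decisive input, supplied by \cite{bnds}, is the \emph{quasi-orthogonality} of the normal-derivative boundary data in such a window:
$$
\sum_{|E - E_j^D| \leq \sqrt{E}} \big|\langle u, d_n u_j^D \rangle_{\dOmega}\big|^2 \;\leq\; C E\, \|u\|_{L^2(\dOmega)}^2.
$$
I then bound $(E - E_j^D)^{-2} \leq \delta^{-2}$ inside the window, and use a dyadic decomposition of successive windows of width $2^k\sqrt{E}$ outside, where the rapidly growing denominator dominates the (same shape of) quasi-orthogonality bound. This yields $\|u\|_{L^2(\Omega)}^2 \leq CE\delta^{-2}\|u\|_{L^2(\dOmega)}^2$, which rearranges to $\delta \leq C\sqrt{E}\, t[u]$.

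\textbf{Lower bound.} Here I construct by hand a single low-tension Helmholtz function. Assume $E \notin \spec^D$ (otherwise $\delta = 0$ and the nearest Dirichlet eigenfunction itself is a Helmholtz solution with $t=0$). Let $u^* = u^D_{j^*}$ realize $|E - E^D_{j^*}| = \delta$, take the boundary datum $f := d_n u^*/\|d_n u^*\|_{L^2(\dOmega)}$ so that $\|f\|_{L^2(\dOmega)} = 1$, and let $u$ be the (well-defined, since $E\notin\spec^D$) Helmholtz solution of energy $E$ with $\ud = f$. Retaining only the $j^*$-th term of the master formula gives
$$
\|u\|_{L^2(\Omega)}^2 \;\geq\; \frac{\|d_n u^*\|_{L^2(\dOmega)}^2}{\delta^2} \;\geq\; \frac{cE}{\delta^2},
$$
where the last step combines the \emph{lower} half of \eqref{Dir-bound} with $E^D_{j^*} \geq cE$ (valid in the interesting regime $\delta \lesssim \sqrt{E}$; the complementary large-$\delta$ regime is trivial after a trace-theorem bound on $t[\umin]$). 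Hence $t[\umin] \leq t[u] = 1/\|u\|_{L^2(\Omega)} \leq \delta/\sqrt{cE}$, which is the claimed inequality.

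\textbf{Main obstacle.} The crux is the quasi-orthogonality step inside the upper-bound argument: it is precisely what replaces a brute-force Cauchy--Schwarz that would fail by a factor $\sim E^{(n-1)/2}$. Once the strengthened bound from \cite{bnds} is in hand, everything else in both directions is essentially formal: Green's identity, Parseval, the explicit test-function construction, and the elementary handling of the edge regimes $E\in\spec^D$ and $\delta \gtrsim \sqrt{E}$.
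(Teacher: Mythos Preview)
This theorem is not proved in the present paper; it is quoted from \cite{incl} and \cite{bnds}. The natural benchmark is Section~\ref{sec:inclusion}, which proves the Neumann analogue (Theorem~\ref{MPS-Neu}) by what it calls ``much the same method'' as \cite{bnds}.

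Your argument is correct and close to that benchmark, with two organizational differences. First, the paper packages everything via the boundary-to-interior solution operator $Z(E)$, notes $\|Z(E)\|=(\min_u t[u])^{-1}$, and bounds $\|Z(E)^*\|$; you instead expand a given $u$ in the eigenbasis and work with the resulting Parseval sum directly --- equivalent viewpoints. Second, for the upper bound the paper applies the \emph{quasimode} estimate (the Dirichlet analogue of Proposition~\ref{prop:upperbound}) to dyadic spectral pieces $\Pi_j$ of the resolvent, while you invoke quasi-orthogonality on unit-frequency windows; the two are $TT^*$-dual (compare the derivation of Theorem~\ref{thm:qo} in Section~\ref{sec:upperbound}). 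Your route is arguably more elementary, since it sidesteps the sharpened form \eqref{Tataru-bound-improv-2} that the paper needs for summability of the high-energy tail. Your lower-bound construction is exactly the Dirichlet transcription of the paper's one-line step $Z(E)^* u_J = (E-E_J)^{-1} G_h(\cdots)\ud_J$ combined with the lower half of \eqref{Dir-bound}.

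One caution in your dyadic step: the quasi-orthogonality constant for a unit window at frequency $m$ is $Cm^2$, not $CE$, so it grows for distant windows. Your phrase ``same shape of quasi-orthogonality bound'' glosses over this. It does work, because the denominator $|E-E_j|^2 \gtrsim |m-\sqrt{E}|^2(m+\sqrt{E})^2$ grows faster, leaving a summable $\sum_m |m-\sqrt{E}|^{-2}$; but this cancellation should be made explicit.
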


\begin{remark} This improves on the classical Moler--Payne bound \cite{molerpayne} by a factor of $\sqrt{E}$. Moreover, it is tight in the sense that the same power of $E$ appears in the lower and upper bound. 
\end{remark}

\begin{figure}[t] % fffffffffffffffffffffffffffffffffffffffffffffffffffffffff
\mbox{%
\hspace{-4ex}
\includegraphics[height=.87in]{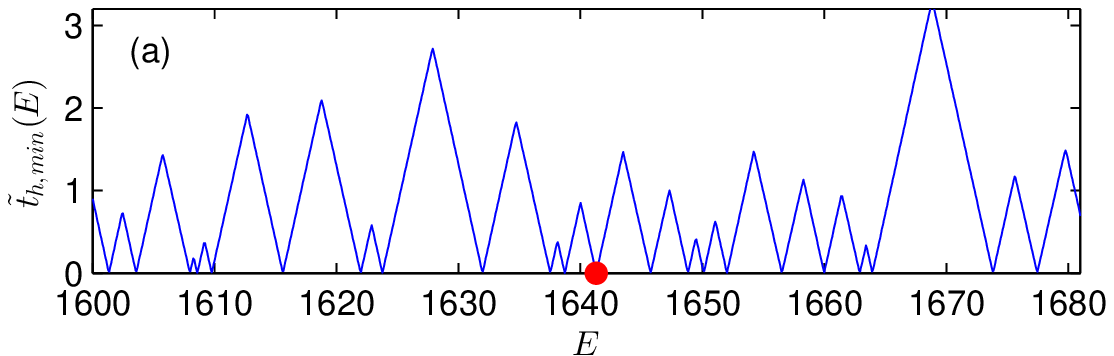}
\includegraphics[height=.87in]{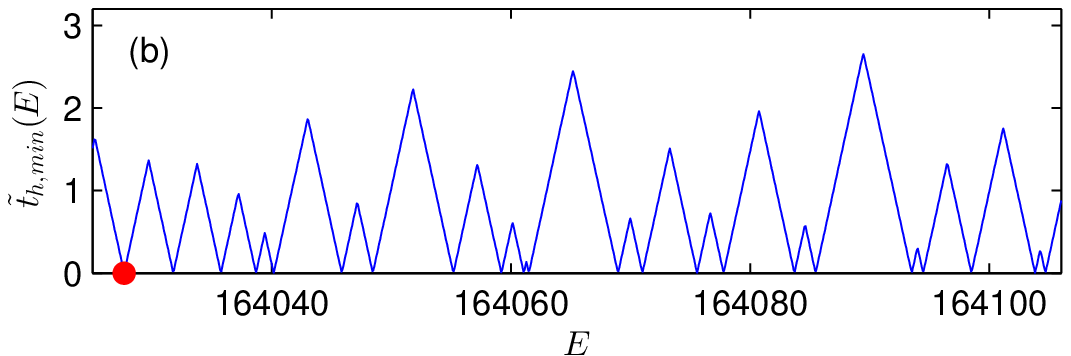}
}%
\caption{Minimum tension $\tttm$ defined by \eqref{tttm-def}
achievable at each energy parameter $E$,
for the nonsymmetric smooth planar domain
of Fig.~\ref{f:intro}(b).
(This is numerically approximated by $\tilde{t}_{h,\tbox{min}}(E)$
defined by \eqref{thmin}.)
Each local tension minimum is very close to zero,
and indicates a Neumann eigenvalue $E^N_j$.
(a) Medium frequency $\sqrt{E} \in [40, 41]$, with the large dot corresponding
to the eigenmode shown in Fig.~\ref{f:intro}(b).
(b) High frequency $\sqrt{E} \in [405, 405.1]$, with the dot corresponding
to the eigenmode of Fig.~\ref{f:high}.
The $E$ intervals in (a) and (b) have the same width, and the vertical scales
are identical, highlighting the similarity of the magnitude of the slopes
(Theorem~\ref{MPS-Neu}).
\label{f:tsweep}
}
\end{figure}

In part, the interest in inclusion bounds comes from numerical analysis. One way to numerically approximate Dirichlet eigenvalues and eigenfunctions is the \emph{method of particular solutions} (MPS) \cite{fhm,molerpayne,mps,gsvd}. In this method, one chooses an energy $E$ and then numerically minimizes $t[u]$ over all $E$-Helmholtz functions $u$. One then moves along the $E$-axis, searching for near-zeros (`roots') of this minimal value of the tension (see Fig.~\ref{f:tsweep}). Then, given an approximate root $E$ and its corresponding Helmholtz function $u$, the inclusion bound 
bounds the eigenvalue error, i.e.\ how far $E$ is from the Dirichlet spectrum.

In this paper, we seek to prove the analogous results for Neumann eigenfunctions. However, the meaning of `analogous' here is far from obvious. The first issue is that the naive analogy of \eqref{Dir-bound}, is false: that is, it is not true that  $\| u^N_j \|_{L^2(\dOmega)}$ is comparable to $1$, as $j$ tends to infinity. In fact, consideration of eigenfunctions on the circle shows that there is a lower bound of $1$, but the sharp upper bound on $\| u^N_j \|_{L^2(\dOmega)}$ is $C E_j^{1/6}$, or, now switching to semiclassical notation, where we write the Helmholtz equation in the form $(h^2 \Delta - 1) u = 0$, by  $C h_j^{-1/3}$. 

However, we can obtain a better analogy of \eqref{Dir-bound} by realizing that, in the high-frequency limit, the Neumann analogue of $h_j d_n u^D_j$ at the boundary is not simply the restriction of $u_j^N$ to the boundary. In fact, the symbol (in the semiclassical sense) of the normal derivative operator $h d_n$ is $i\xi_n$, where for $p \in \dOmega$,  $\xi_n$ is the linear function on  $T_{p} \Omega$ that vanishes on vectors tangent to $\dOmega$ and equals $1$ on the inward-pointing unit normal.\footnote{The semiclassical  symbol of a differential operator $\sum_\alpha v_\alpha(x) (hD_x)^\alpha$ on $\RR^n$, where $D_x^\alpha := \Pi_{i=1}^n (-i \partial_{x_i})^{\alpha_i}$, is equal to $\sum_\alpha v_\alpha(x) \xi_1^{\alpha_1} \dots \xi_n^{\alpha_n}$. This incorporates the semiclassical scaling, in which frequency is scaled by a factor of $h$, i.e. $\xi$ represents the oscillation $e^{ix \cdot \xi/h}$ rather than $e^{ix \cdot \xi}$. On a manifold, the semiclassical symbol is invariantly defined on the cotangent bundle.}  In the semiclassical limit, $h \to 0$, eigenfunctions are concentrated in frequency at the set $|\xi| = 1$, i.e. heuristically they are composed of a superposition of oscillations $e^{ix \cdot \xi/h}$ with frequencies $\xi$ of different directions and unit length. 
At the boundary, we can decompose such a $\xi$ into tangential and normal components, $\xi= (\xi', \xi_n)$, thus  we have $\xi_n = \pm \sqrt{1 - |\xi'|^2}$. Recalling that the symbol  of $h^2 \Deltab$ is $|\xi'|^2$, $\xi_n$ is, at least heuristically, the symbol of the operator $(1 - h_j^2 \Delta_{\dOmega})_+^{1/2}$,
where $t^{1/2}_+:=\sqrt{\max[t,0]}$. 

%
%
%
%So, for a frequency decomposition of an eigenfunction, $\xi_n$ is equal, at the boundary, to  $\pm \sqrt{1 - |\xi'|^2}$, where $\xi$ is orthogonally decomposed $\xi = (\xi', \xi_n)$ at the boundary. Acting on the boundary value $u^N_j$ of a Neumann eigenfunction, this is given (at least heuristically) as $(1 - h_j^2 \Delta_{\dOmega})_+^{1/2}$,
%where $t^{1/2}_+:=\sqrt{\max[t,0]}$.
 Thus, we can conjecture that we might get better estimates on the quantity
$$
\big\| (1 - h_j^{2} \Delta_{\dOmega} )_+^{1/2} u^N_j \big\|_{L^2(\dOmega)}.
$$

This conjecture can be tested on the unit disc $D \subset \RR^2$. On the unit disc, 
given in polar coordinates by $0 \leq \theta < 2\pi$, $0 \leq r \leq 1$, Neumann eigenfunctions take the form 
$$
v_{n, l} = (A \cos n\theta  + B \sin n\theta) J_n(\mu_{n,l} r) 
$$
(apart from constants, associated to eigenvalue $0$) where $J_n$ is the Bessel function of order $n$ and $\mu_{n,l}$ is the $l$th positive zero of $J_n'(r)$. The eigenvalue is $E = \mu_{n,l}^2 = h_{n,l}^{-2}$; it is known that every eigenvalue (except 0) has multiplicity 2. We perform the following computation, using the well-known commutation identity $[\Delta, r \partial_r ] = 2\Delta$. 
\begin{equation}\begin{gathered}
2 E \| v \|_{L^2(D)}^2 = \int_{D} [\Delta - E, r \partial_r] v \cdot v \\
= \int_D (\Delta - E) (r \partial_r v) - (r \partial_r v) (\Delta - E) v \\
= \int_{\partial D}  (-\partial_r^2 v) v = \int_{\partial D} (E + \partial_\theta^2) v \cdot v \\
= (E - n^2) \| v \|_{L^2(\partial D)}^2. 
\end{gathered}\end{equation}
This shows that 
\begin{equation}
\frac{ \| v_{n,l} \|_{L^2(\partial D)} } {\| v_{n,l} \|_{L^2(D)} } = \sqrt{2} \big(1 - h_{n,l}^2 n^2 \big)^{-1/2}.
\label{v-ratio}\end{equation}
It is standard that the positive zeroes  of $J_n'$ start from $\mu_{n,1} = n + a n^{1/3} + O(n^{-1/3})$ with $a > 0$, and tend to infinity.  Since $h_{n,l} = \mu_{n,l}^{-1}$, this shows that the ratio \eqref{v-ratio} can be as large as $C h_{n,l}^{-1/3}$ --- illustrated by the whispering gallery mode shown in Fig.~\ref{f:intro}(a), where the eigenfunction is concentrated in a $h^{2/3}$ boundary layer ---
and as small as $\sqrt{2}$.

However,  at the boundary of the unit disc, $(1 - h_{n,l}^{2} \Delta_{\partial D})_+^{1/2} v_{n,l} = (1 - h_{n,l}^2 n^2 )^{1/2} v_{n,l}$. This yields  
\begin{equation}
\frac{ \| (1 - h_{n,l}^{2} \Delta_{\partial D})_+^{1/2} v_{n,l} \|_{L^2(\partial D)} } {\| v_{n,l} \|_{L^2(D)} } = \sqrt{2}.
\label{v-ratio-2}\end{equation}
Thus, we get uniform estimates (and even an exact identity) in the case of the unit disc. 

\subsection{Main results}

The discussion above motivates the question whether for an arbitrary smooth domain $\Omega$, there exist domain-dependent positive
constants $c$ and $C$ such that 
\begin{equation}
c \| u^N_j \|_{L^2(\Omega)} \leq  \| (1 - h_j^{2} \Delta_{\dOmega})_+^{1/2} u^N_j \|_{L^2(\partial \Omega)}  \leq C \| u^N_j \|_{L^2(\Omega)}.
\label{v-ratio-Omega}\end{equation}

We shall give a positive answer to a slightly different question with a modified spectral cutoff, in which the spectral weight factor $(1-h^2 \Deltab)^{1/2}$ is truncated at the level $h^{1/3}$.

%*** MOTIVATE VIA NUMERICAL
%we will shortly explain
%\eqref{tttm-def}
%involves the inverse of the spectral weighting operator.
%***

Let $\chi_1, \chi_2 : \RR \to [0,1]$ be smooth functions such that $\chi_1(t) = 0$ for $t \leq 1$ and $\chi_1(t) = 1$ for $t \geq 2$, and such that $\chi_1^2 + \chi_2^2 = 1$. 
Consider the family of smooth functions $G_h$, depending on a parameter $h > 0$, given by 
\begin{equation}\begin{gathered}
G_h(\sigma) :=  \sqrt{\sigma} \chi_1 \big(\frac{\sigma}{h^{2/3}} \big) + h^{1/3}\chi_2 \big(\frac{\sigma}{h^{2/3}} \big) .
\end{gathered}\label{Gh}\end{equation}
This function regularizes the square-root in a smooth manner,
as sketched in Fig.~\ref{f:intro}(c). Let us motivate this choice of regularization. Since, as discussed above, heuristically an eigenfunction $u$, satisfying $(h^2 \Delta - 1) u = 0$, is composed of a superposition of oscillations $e^{i x \cdot \xi/h}$ with $|\xi| = 1$, the boundary values are composed of oscillations $e^{ix' \cdot \xi'/h}$, with $0 \leq |\xi'| \leq 1$. That is, the (semiclassically scaled) frequencies comprising the boundary values of eigenfunctions lie in the unit ball, $\{ |\xi'| \leq 1 \}$ of $T^* \dOmega$ --- the `classically allowed region'. This corresponds to the region where the symbol of $1 - h^2\Deltab$ is nonnegative. Therefore, the size of 
%the norm of a function of $1 - h^2\Deltab$ applied to the boundary value of $u$, that is, 
$\| G_h(1 - h^2 \Deltab) \ud \|_{L^2(\dOmega)}$ should be  rather insensitive to the behaviour of $G_h(\sigma)$ for $\sigma$ negative, allowing flexibility in the functional form of $G_h(\sigma)$ for $\sigma < 0$. In numerical applications we want to use the reciprocal, $F_h(\sigma) := 1/G_h(\sigma)$  as a spectral weight --- see \eqref{tttm-def}. Thus we want to choose $G_h(\sigma)$ strictly positive, and not too small, when $\sigma$ is negative. 
Regularization at the scale $h^{1/3}$ is then very natural, as it is the largest quantity that cancels the maximal possible growth of the $L^2$ norm of $\ud$ in view of \eqref{eq:Neumannbound}. We plot the resulting weight 
$G_h(1-|\xi'|^2)$ in terms of the scaled transverse frequency $\xi'$ in
Fig.~\ref{f:intro}(d).

Our modified question is then whether for any arbitrary smooth domain $\Omega$, there exist constants positive $c$ and $C$ such that 
\begin{equation}
c \| u_j \|_{L^2(\dOmega)} \leq  \| G_{h}(1 - h^2 \Delta_{\dOmega}) u_j \|_{L^2(\dOmega)}  \leq C \| u_j \|_{L^2(\dOmega)}, \quad |\sqrt{E_j} - h^{-1} | \leq 1. 
\label{v-ratio-Omega-2}\end{equation}
%Given Proposition ****, for sufficiently small $\epsilon$, this is equivalent to \eqref{v-ratio-Omega}. 

Our first theorem is a positive answer to this modified question. 

\begin{theo}\label{Neu-est}
Let $\Omega \subset \RR^n$ be a smooth bounded domain, and let $\ud_j$ be the restriction to $\partial \Omega$ of the $j$th $L^2$-normalized Neumann eigenfunction $u_j$. Then there are positive constants $c, C$ such that 
\begin{equation}
c \leq \|  G_{h}(1-h^2 \Deltab) \ud_j \|_{L^2(\partial \Omega)} \leq C,
\qquad
|h^{-1} -\sqrt{E_j}| \leq 1.
\label{cC}\end{equation}
\end{theo}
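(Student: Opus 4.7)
The plan is to prove Theorem~\ref{Neu-est} via a Rellich-type commutator identity combined with a microlocal decomposition of the boundary data in the tangential frequency variable.

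The central tool is the Rellich identity for Neumann eigenfunctions. Choose a smooth real vector field $Z$ on $\Omegabar$ with $Z\cdot\nu=1$ on $\pO$. Pair $(\Delta-E)u=0$ against $\overline{Zu}$, integrate by parts in the interior, and use the Neumann condition $d_n u=0$ to eliminate normal-derivative boundary terms. After identifying $|\nabla u|^2|_{\pO}=|\nabla_T \ud|^2$ and integrating by parts on $\pO$ to rewrite $E|\ud|^2-|\nabla_T\ud|^2=h^{-2}(1-h^2\Deltab)\ud\cdot\ud$, one obtains
\begin{equation*}
\int_{\pO}(1-h^2\Deltab)|\ud|^2\,dS = \int_\Omega(\operatorname{div}Z)|u|^2 + 2h^2\Re\!\int_\Omega (\nabla Z)[\nabla u,\overline{\nabla u}] - h^2\!\int_\Omega(\operatorname{div}Z)|\nabla u|^2.
\end{equation*}
Combined with the Neumann Green identity $\int_\Omega|\nabla u|^2=E\int_\Omega|u|^2=h^{-2}$ and $\|u\|_{L^2(\Omega)}=1$, the right-hand side is $O(1)$. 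Choosing $Z=\nabla\phi$ with $\operatorname{Hess}\phi$ positive definite and $\operatorname{div}Z$ bounded below by $c>0$ (by gluing locally star-shaped fields through a partition of unity, as in the Dirichlet argument of \cite{HT}), the first term dominates and yields a lower bound $\geq c>0$. Hence $\int_{\pO}(1-h^2\Deltab)|\ud|^2\,dS$ is comparable to $1$.

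Next, set $P=1-h^2\Deltab$ and use the spectral theorem for $\Deltab$ to decompose the boundary frequencies into an elliptic region ($P\geq 2h^{2/3}$), a glancing region ($|P|\leq 2h^{2/3}$), and a hyperbolic region ($P\leq-2h^{2/3}$). On the elliptic region $G_h(P)^2=P$, so its contribution to $\|G_h(P)\ud\|_{L^2(\pO)}^2$ equals the Rellich mass there. On the glancing and hyperbolic regions $G_h(P)^2\asymp h^{2/3}$, so their combined contribution is bounded by $C h^{2/3}\|\ud\|_{L^2(\pO)}^2\leq C'$ via the Tataru whispering-gallery bound $\|\ud\|_{L^2(\pO)}\leq Ch^{-1/3}$. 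Together with the Rellich estimate this yields the upper bound.

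The hardest step is the lower bound, which requires ruling out cancellation between positive elliptic and negative hyperbolic contributions in $\int P|\ud|^2$. For this, invoke a microlocal elliptic regularity estimate of Tataru type: the semiclassical operator $h^2\Delta-1$ is elliptic near $\pO$ at tangential frequencies $|\xi'|>1$, so boundary microlocal energy estimates yield $\|\mathbf{1}_{P\leq -c}\ud\|_{L^2(\pO)}^2=O(h^\infty)$ for any fixed $c>0$. Hence the hyperbolic contribution to $\int P|\ud|^2$ is negligible, and the positive Rellich mass must be concentrated in $\{P\geq -c\}$. A short case analysis then gives $\|G_h(P)\ud\|_{L^2(\pO)}^2\gtrsim 1$: either the elliptic mass $\int_{P\geq 2h^{2/3}}P|\ud|^2\gtrsim 1$ (forcing the elliptic contribution to be $\gtrsim 1$), or else the glancing $L^2$-mass $\|\mathbf{1}_{|P|\leq 2h^{2/3}}\ud\|^2\gtrsim h^{-2/3}$ (forcing the glancing contribution $\gtrsim h^{2/3}\cdot h^{-2/3}=1$). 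Finally, extension from $E_j=h^{-2}$ to the spectral window $|h^{-1}-\sqrt{E_j}|\leq 1$ follows from the $O(1)$-stability of each step under perturbations of $h^{-1}$, together with a non-concentration estimate for $(\Delta-h^{-2})u_j$ in the Rellich remainder.
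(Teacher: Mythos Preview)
Your overall architecture---Rellich identity plus a tangential-frequency decomposition into elliptic/glancing/hyperbolic pieces---is essentially the paper's strategy for the \emph{lower} bound (Section~\ref{sec:lowerbound}). But there is a genuine gap in your control of the hyperbolic region, and it breaks both halves of the argument.

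The problem is this. You invoke ``microlocal elliptic regularity of Tataru type'' to assert $\|\mathbf{1}_{P\leq -c}\ud\|_{L^2(\pO)}^2=O(h^\infty)$ for any \emph{fixed} $c>0$, and then conclude that the hyperbolic contribution to $\int_{\pO}P|\ud|^2$ is negligible. But your hyperbolic region is $\{P\leq -2h^{2/3}\}$, not $\{P\leq -c\}$. The intermediate band $\{-c\leq P\leq -2h^{2/3}\}$ is not covered by the $O(h^\infty)$ estimate, and on that band the only a priori control you have is $|P|\leq c$ together with the Tataru bound $\|\ud\|^2\leq Ch^{-2/3}$, giving a contribution of size $c\cdot Ch^{-2/3}$, which blows up. This wrecks the lower-bound case analysis: the ``positive Rellich mass'' you want to locate in $\{P\geq -2h^{2/3}\}$ could be entirely cancelled by this intermediate band. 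It also wrecks your upper bound: from Rellich you know $\int P|\ud|^2=O(1)$, but to deduce $\int_{P\geq 2h^{2/3}}P|\ud|^2=O(1)$ you must subtract the (negative) hyperbolic piece, and that requires $\int_{P<0}|P|\,|\ud|^2=O(1)$, which you have not established.

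What is actually needed is a \emph{quantitative} exterior-mass estimate at the $h^{2/3}$ scale: roughly, $\int_{P\leq -M^2h^{2/3}}|P|\,|\ud|^2\leq C/M$ uniformly in $h$. This is exactly the content of the paper's Proposition~\ref{prop:extmass}, proved by an ODE comparison argument (exponential growth of a spectrally-localized mass function $L(r)=h^2\langle f_M(P)v,v\rangle$ if $L(0)$ were too large). Alternatively, you could invoke the full strength of Tataru's trace estimate \eqref{Tataru-norm}, whose third term $\|(1-h^2\Deltab)_-^{1/2}\ud\|=O(1)$ is precisely this bound---but you only cited the $h^{-1/3}$ part. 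Either way, this step is not a triviality and cannot be replaced by fixed-scale elliptic regularity. For the upper bound the paper in fact avoids Rellich entirely and instead proves $\langle f(P)P\,\ud,\ud\rangle=O(1)$ directly via a modified energy functional (Section~\ref{sec:modE}); your Rellich route would work too, but only after the exterior-mass estimate is in hand.
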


\begin{remark} The upper bound holds also for spectral clusters, that is, linear combinations of eigenfunctions with frequencies $h_j^{-1}$ in the range $[h^{-1}, h^{-1} + c]$,  and the lower bound also holds for clusters provided that $c$ is sufficiently small. In fact, the upper bound is more generally true for quasimodes satisfying $\| (h^2 \Delta - 1) u \|_{L^2(\Omega)} = O(h)$ and $\|  \nabla (h^2 \Delta - 1) u \|_{L^2(\Omega)} = O(1)$; see Proposition~\ref{prop:upperbound}. 
\end{remark}

\begin{remark} This is closely related to results of Tataru from the 1990s on the wave equation. In particular, the upper bound follows from \cite{Tataru}. See Section~\ref{subsec:lit} for more details. 
\end{remark}

We also prove a stronger version,  in which the upper bound is replaced by an upper bound for the norm of an operator built from all the eigenfunctions with eigenfrequency lying in an $O(1)$ frequency window. 
%We prove 
%
%The stronger version of the upper bound is an operator norm estimate which shows that the functions $(1 - h^2 \Deltab) u_j$, for $u_j$ in a spectral window as for spectral clusters above, are quasi-orthogonal in some sense. 

\begin{theo}[Quasi-orthogonality of spectrally weighted boundary values of Neumann eigenfunctions in a spectral window]\label{thm:qo}
The operator norm of \begin{equation}
\sum_{|\sqrt{E_j} - h^{-1}| \leq 1} G_{h}(1-h^2 \Deltab) \ud_j \ \Big\langle G_{h}(1-h^2 \Deltab) \ud_j, \ \cdot \ \Big\rangle 
\label{Nopnorm}\end{equation}
on $L^2(\dOmega)$ is uniformly bounded as $h \to 0$.
\end{theo}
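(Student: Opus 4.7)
The plan is to reduce the operator-norm bound by duality to a Bessel-type inequality on spectral clusters, and then invoke the cluster version of the upper bound from Theorem~\ref{Neu-est}. Write $A_h := G_h(1-h^2\Deltab)$ for brevity, and $J_h := \{j : |\sqrt{E_j} - h^{-1}| \leq 1\}$ for the summation index set in \eqref{Nopnorm}. The operator in \eqref{Nopnorm} factors as $T = K^* K$, where $K: L^2(\dOmega) \to \ell^2(J_h)$ is the frame-analysis map $(Kf)_j := \langle A_h \ud_j,\, f\rangle$, with adjoint $K^*: (c_j) \mapsto \sum_{j\in J_h} c_j A_h \ud_j$. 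Since $\|T\|_{op} = \|K^*\|^2$, uniform boundedness in $h$ of \eqref{Nopnorm} is equivalent to the Bessel inequality
\[
\Big\| \sum_{j\in J_h} c_j A_h \ud_j \Big\|_{L^2(\dOmega)}^2 \leq C \sum_{j\in J_h} |c_j|^2,
\]
with $C$ independent of $h$ and of the finite sequence $(c_j)$.

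Given such a sequence $(c_j)$, I would assemble the spectral cluster $u := \sum_{j\in J_h} c_j u^N_j \in L^2(\Omega)$. By $L^2(\Omega)$-orthonormality of the Neumann eigenfunctions, $\|u\|_{L^2(\Omega)}^2 = \sum_j |c_j|^2$, while linearity of the trace and of $A_h$ gives $\sum_j c_j A_h \ud_j = A_h \ud$. The Bessel inequality thus becomes exactly the cluster-form upper bound
\[
\|A_h \ud\|_{L^2(\dOmega)} \leq C\, \|u\|_{L^2(\Omega)},
\]
asserted in the remark following Theorem~\ref{Neu-est} for clusters whose eigenfrequencies lie in a window of some small but fixed width $c > 0$. (One checks that such a cluster satisfies the quasimode hypotheses $\|(h^2\Delta-1)u\|_{L^2}=O(h)$ and $\|\nabla(h^2\Delta-1)u\|_{L^2}=O(1)$ relative to $\|u\|_{L^2}$, using $h^2 E_j-1=O(h)$ on the window and the standard identity $\|\nabla u\|^2=\sum_j |c_j|^2 E_j$.)

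To bridge from the small window of width $c$ to the width-$2$ window $J_h$, I would cover $[h^{-1}-1, h^{-1}+1]$ by $N = \lceil 2/c \rceil$ consecutive sub-windows of width at most $c$, and split $u = \sum_{k=1}^N v_k$ accordingly. The $v_k$ are mutually orthogonal in $L^2(\Omega)$, so $\sum_k \|v_k\|_{L^2(\Omega)}^2 = \|u\|_{L^2(\Omega)}^2$. Applying the small-window cluster bound to each $v_k$, then the triangle and Cauchy--Schwarz inequalities, yields
\[
\|A_h \ud\|_{L^2(\dOmega)} \leq \sum_{k=1}^N \|A_h v_k^{\partial\Omega}\|_{L^2(\dOmega)} \leq C \sum_k \|v_k\|_{L^2(\Omega)} \leq C\sqrt{N}\,\|u\|_{L^2(\Omega)},
\]
which is the desired cluster bound for the full window $J_h$ with $C^2 N$ in place of $C$. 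The real substance of the argument lies upstream, in the cluster form of the upper bound from Theorem~\ref{Neu-est}: naively bounding each summand individually and summing via the triangle inequality would cost a factor of $\sqrt{\#J_h} \sim h^{-(n-1)/2}$ by Weyl's law, so genuine cluster-level orthogonality --- which we expect to obtain by a propagation or wave-equation argument in the spirit of Tataru --- is essential. Once the cluster upper bound is in hand, Theorem~\ref{thm:qo} follows by pure duality and orthogonal decomposition in the interior $L^2(\Omega)$ norm.
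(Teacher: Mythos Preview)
Your argument is essentially the paper's own: the operator in \eqref{Nopnorm} is $T_1 T_1^*$ for the restriction-and-weight map $T_1: L^2(\Omega)\to L^2(\dOmega)$, $u\mapsto G_h(1-h^2\Deltab)((\Pi_h u)^{\dOmega})$, which is your $K^*$ after identifying $\ell^2(J_h)$ isometrically with the range of $\Pi_h$; one then bounds $\|T_1\|$ by applying Proposition~\ref{prop:upperbound} to spectral clusters and checking the quasimode hypotheses, exactly as you indicate.

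Two minor corrections. First, the sub-window decomposition is unnecessary: the quasimode verification $\|(h^2\Delta-1)u\|_{L^2}=O(h)$ and $\|\nabla(h^2\Delta-1)u\|_{L^2}=O(1)$ that you sketch works directly for the full width-$2$ window $J_h$, since $|h^2 E_j-1|\leq Ch$ there; the smallness restriction in the remark after Theorem~\ref{Neu-est} pertains only to the lower bound. Second, the cluster upper bound does not require a propagation or Tataru-type wave argument: it is Proposition~\ref{prop:upperbound}, proved in the paper by elementary energy estimates (Sections~\ref{sec:1/3-bound}--\ref{sec:modE}) with no microlocal analysis.
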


\begin{remark}
By Weyl's law there are $O(E^{(n-1)/2})$ terms in the above sum,
and since by Theorem~\ref{Neu-est} each term has norm bounded below by
a constant, then Theorem~\ref{thm:qo} implies essentially complete
mutual orthogonality of the functions $G_{h}(1-h^2 \Deltab) \ud_j$,
up to a constant.
This is related to the pairwise quasi-orthogonality result
due to the first author,
\begin{equation}
\int_\pO (x\cdot n)\biggl( \frac{E_i+E_j}{2} u_i u_j - (d_t u_i)(d_t u_j)\biggr)
\; \le \; C (E_i-E_j)^2,
\label{pqo}\end{equation}
where $C= \sup_{x\in\Omega} \|x\|^2 /4$ is given explicitly.
Eq.~\eqref{pqo}
follows, e.g., by applying Neumann boundary conditions to the identity
\cite[Eq.~(C.6)]{que}. Indeed, the inner product
$$
\big\langle G_{h}(1-h^2 \Deltab) \ud_i , G_{h}(1-h^2 \Deltab) \ud_j  \big\rangle
$$
is analogous to \eqref{pqo} (up to the $x \cdot n$ factor), if we treat $G_h$ as a square-root, and integrate
by parts. 
Estimates \eqref{Nopnorm}, \eqref{pqo} are analogous to the quasi-orthogonality results for normal
derivatives of Dirichlet eigenfunctions proved in \cite{que,bnds}.
\end{remark}

Our second main result is an inclusion bound for Neumann eigenvalues. To state this, we define the modified tension appropriate to the Neumann boundary condition. Let $u$ be a smooth function on $\Omega$, which is continuous on the closure of $\Omega$.  We define
the inverse spectral weight $F_h(\sigma) = 1/G_h(\sigma)$,
%to be the inverse of  $G_h(\Delta_{\dOmega})$. 
then define a new tension $\tttm[u]$ by the quotient
\begin{equation}
\tttm[u] \;:=\; \frac{ \| F_h(1-h^2 \Deltab)(d_n u) \|_{L^2(\dOmega)}}{ \| u \|_{L^2(\Omega)}}.
\label{tttm-def}
\end{equation}
% our main thingy

Notice that in this definition, the spectral multiplier $F_h$ acts, at least heuristically, as the inverse of the symbol of the normal derivative. So we can expect that  this tension is a good analogue of the Dirichlet tension \eqref{t-def} in the high energy limit. 
Our second theorem justifies this heuristic, by proving a Neumann inclusion bound in which both the upper and lower bounds are uniform as $h \to 0$.

\begin{theo}\label{MPS-Neu} There exist positive constants $c, C$ depending only on $\Omega$ such that the following holds.  Let $u$ be any  nonzero solution of $(\Delta - E) u = 0$ in $C^\infty(\Omega)$. Let $\tttm[u]$ be as in \eqref{tttm-def} with $h^{-2} = E$, and let $\umin$ be the Helmholtz solution minimizing $\tttm[u]$. Then
\begin{equation}
c   \tttm[\umin] \leq d(E, \spec^N) \leq C \tttm[u].
\label{inclusion-eval}\end{equation}
Moreover, if $J$ is such that $E_J$ is the closest Neumann eigenvalue to $E$, $E_* \neq E_J$ is the next closest Neumann
eigenvalue, and $\Pi_J$ is orthogonal projection onto the $E_J$-eigenspace, then there is a constant $C$ such that 
\begin{equation}
\frac{ \| u - \Pi_J u \|_{L^2(\Omega)}}{\| u \|_{L^2(\Omega)}} \leq C \frac{ \tttm[u]}{|E - E_*|}.
\label{inclusion-efn}\end{equation}
\end{theo}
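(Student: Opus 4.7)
The three assertions all flow from a single eigenfunction expansion combined with Theorems~\ref{Neu-est} and~\ref{thm:qo}. Expand the trial function as $u = \sum_j c_j u_j$ in the orthonormal Neumann basis of $L^2(\Omega)$. Since $(\Delta-E)u = 0$ in $\Omega$ and $d_n u_j|_{\dOmega} = 0$, Green's second identity delivers
\[
(E - E_j)\, c_j \;=\; \langle \ud_j,\; w\rangle_{L^2(\dOmega)},\qquad w := (d_n u)|_{\dOmega}.
\]
Using the pointwise identity $F_h(\sigma)G_h(\sigma) \equiv 1$ to insert a spectral weight on the right-hand side, this rewrites as
\[
c_j \;=\; \frac{\bigl\langle G_h(1-h^2\Deltab)\,\ud_j,\; F_h(1-h^2\Deltab)\,w\bigr\rangle}{E - E_j},
\]
and each conclusion of the theorem will be derived from this single identity.

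For the upper bound $d(E,\spec^N) \leq C\,\tttm[u]$, the plan is to estimate $\sum_j d(E,\spec^N)^2 |c_j|^2$ and invoke Parseval $\sum|c_j|^2 = \|u\|_{L^2(\Omega)}^2$. On the near window $|\sqrt{E_j}-h^{-1}|\leq 1$ one has $d(E,\spec^N)^2/(E-E_j)^2 \leq 1$, and testing Theorem~\ref{thm:qo} on the vector $v = F_h w$ gives $\sum_{\text{near}} |\langle G_h\ud_j, F_h w\rangle|^2 \leq C\,\|F_h w\|_{L^2(\dOmega)}^2$. The far contribution will be handled by decomposing into unit-width frequency bands $|\sqrt{E_j}-\sqrt{E}|\in[K,K+1)$ for $K\geq 1$, applying a translated version of Theorem~\ref{thm:qo} at shifted scale $\tilde h_K^{-1} = h^{-1}+K+\tfrac12$, and exploiting the improved denominator $|E-E_j|\gtrsim K\sqrt{E}+K^2$ to ensure summability in $K$; a comparison of the weights $F_{\tilde h_K}$ with $F_h$ (using the explicit piecewise definition of $G_h$) then controls the total by $\|F_h w\|^2$.

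For the lower bound $c\,\tttm[\umin] \leq d(E,\spec^N)$ it suffices to exhibit any one Helmholtz function with tension $O(d(E,\spec^N))$, since $\umin$ is the minimizer. Assume $E\notin\spec^N$ (otherwise $u_J$ itself has tension zero). Let $E_J$ be the closest Neumann eigenvalue with $L^2$-normalized eigenfunction $u_J$, and let $N_E: g\mapsto v$ denote the Neumann Poisson operator, sending boundary data to the unique Helmholtz solution with $d_n v|_{\dOmega}=g$. Inverting the basic identity gives the explicit expansion
\[
N_E g \;=\; \sum_k \frac{\langle \ud_k,\,g\rangle}{E-E_k}\, u_k .
\]
Taking $g := G_h(1-h^2\Deltab)^2\,\ud_J$, the $k=J$ term alone yields $\|N_E g\|_{L^2(\Omega)} \geq \|G_h\ud_J\|^2/|E-E_J|$, while $F_h g = G_h\ud_J$ gives $\|F_h g\|_{L^2(\dOmega)} = \|G_h\ud_J\|$. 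Dividing collapses the powers of $\|G_h\ud_J\|$ and the two-sided bound of Theorem~\ref{Neu-est} furnishes $\tttm[N_E g] \leq C|E-E_J| = C\,d(E,\spec^N)$. The eigenfunction estimate \eqref{inclusion-efn} then follows from the same expansion of the original $u$: $\|u-\Pi_J u\|^2 = \sum_{E_j\neq E_J}|c_j|^2$, and since $|E-E_j|\geq |E-E_*|$ on that index set, the near/far splitting from the upper-bound argument yields $\|u-\Pi_J u\|^2 \leq C\,\|F_h w\|^2/|E-E_*|^2 = C\,\|u\|^2\,\tttm[u]^2/|E-E_*|^2$.

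The principal technical obstacle is the far-spectrum part of the upper bound (and of \eqref{inclusion-efn}): naively the sum $\sum_j|\langle \ud_j, w\rangle|^2$ need not converge because $\|\ud_j\|_{L^2(\dOmega)}$ can grow like $E_j^{1/6}$ (whispering-gallery modes), and Theorem~\ref{thm:qo} is stated only at the one spectral window centred at $h^{-1}$. The shifted-window dyadic decomposition, together with the comparison of $F_{\tilde h_K}$ with $F_h$ that exploits the $h^{1/3}$-regularisation scale built into $G_h$, is what will secure the required uniformity in $h$.
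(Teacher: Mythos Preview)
Your lower bound for \eqref{inclusion-eval} and your argument for \eqref{inclusion-efn} are essentially the paper's: both rest on the formula $c_j=(E-E_j)^{-1}\langle \ud_j, d_n u\rangle$, insert the spectral weight via $F_hG_h=\Id$, and then invoke the lower bound of Theorem~\ref{Neu-est} on $G_h\ud_J$. The paper phrases the lower bound as ``apply $Z(E)^\star$ to $u_J$'', which is the same computation as your choice $g=G_h^2\ud_J$, just dualised.

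The genuine difference is in the \emph{upper bound} (far spectrum). The paper does \emph{not} iterate Theorem~\ref{thm:qo} over shifted windows. Instead it writes $Z^\star(E)=h^2\,T\,(h^2\Delta^N-1)^{-1}$ with $T=G_h(1-h^2\Deltab)\circ(\text{restriction})$, decomposes dyadically via spectral projections $\Pi_j$ onto $\{E_k\in(4^jE,4^{j+1}E]\}$, and bounds each piece by applying Proposition~\ref{prop:upperbound} in its \emph{sharpened form} \eqref{Tataru-bound-improv-2}. The crucial feature of \eqref{Tataru-bound-improv-2} is that $\|\nabla w\|$ enters only with exponent~$1/4$; this is exactly what makes the dyadic sum $\sum_j\|Z^\star_j\|$ converge (with half a power of $h$ to spare). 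The quasi-orthogonality theorem plays no role in this step.

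Your route --- unit-width bands in $\sqrt{E_j}$, Theorem~\ref{thm:qo} at each shifted scale $\tilde h_K$, then a pointwise comparison of $F_{\tilde h_K}(1-\tilde h_K^2\Deltab)$ with $F_h(1-h^2\Deltab)$ --- can be pushed through, but the weight comparison is the whole difficulty and you have only asserted it. One must bound $\sup_{\mu\ge0}\bigl|G_h(1-h^2\mu)/G_{\tilde h_K}(1-\tilde h_K^2\mu)\bigr|$ case by case (the delicate regime is where $1-\tilde h_K^2\mu$ sits near the $\tilde h_K^{2/3}$ transition while $1-h^2\mu$ is of order one), obtaining a $K$-dependent constant $C_K$, and then check that $\sum_K C_K^2/|E-E_j|^2$ is uniformly summable. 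It comes out, but several sub-sums are only borderline convergent (e.g.\ $O(h^{1/3}\log h^{-1})$), so there is no slack. By contrast, the paper's use of \eqref{Tataru-bound-improv-2} bypasses the weight comparison entirely and gives a cleaner, more robust argument.
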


\begin{remark}
Existing classical Neumann inclusion bounds, e.g.\
taking the case of a Helmholtz trial function in
\cite[Eq.~(14)]{KSbounds} or \cite[Thm.~7]{EnNeu},
bound the error
in the eigenvalue by a constant times $E \|d_n u\|_{L^2(\pO)} / \|u\|_{L^2(\Omega)}$.
Since the operator norm $\|F_h(1-h^2\Deltab)\|_{L^2(\pO)} = O(h^{-1/3}) = O(E^{1/6})$,
this means that our new bound is %asymptotically
a factor at least $E^{5/6}$ tighter than
existing bounds at high frequencies.
For comparison, the factor of improvement in the Dirichlet inclusion bound in \cite{bnds} was $E^{1/2}$.
%Heuristically, in the case where the boundary error function $u_n$ is
%band-limited and equally distributed in the transverse frequency variable $\xi$,
%then our new bound would be a full factor $E$ tighter than existing bounds.
\label{r:factor}
\end{remark}

Theorem~\ref{MPS-Neu} is illustrated by Fig.~\ref{f:tsweep},
which plots a good numerical
approximation to $\tttm[\umin]$ vs $E$ over two different $E$ ranges.
Eq. \eqref{inclusion-eval} implies that,
in the neighbourhood of each eigenvalue,
such graphs must lie between `absolute value' type functions with
slopes uniformly bounded above and below
(also see Remark~\ref{melissa}).
Indeed, it is visually clear
that the slopes are very similar and independent of $E$.

% cite Weinberger and other various Neumann bnds? even if not MPS inclusions?

\subsection{Analytic techniques}
A feature of this paper is that we need only relatively standard analytic tools to obtain our results; in particular we
use no microlocal analysis. We make use of 
\begin{itemize}
\item Energy estimates, of a sort familiar in hyperbolic equations, but here adapted to the elliptic setting.
\item The Helffer-Sj\"ostrand formula for functions of self-adjoint operators --- see \eqref{HSform}. This is our main technical tool for analyzing functions of operators. 
%It will allow us to accurately estimate the dependence on $h$ of functions such as $G_h(1 - h^2 \Deltab)$, and related operators.  
\item Standard elliptic PDE estimates of the sort found in Gilbarg-Trudinger \cite{MR1814364}, Taylor \cite{TaylorI} or Evans \cite{evanspde}, which tell us 
that elliptic operators have good mapping properties between Sobolev spaces. More precisely, let $M$ be a compact manifold (such as $\dOmega$), and let $P$ be an elliptic operator of second order on $M$. Then if $Pu = f$, and if $f \in H^s(M)$, $s \geq 0$, then 
\begin{equation}
\| u \|_{H^{s+2}(M)} \leq C \Big( \| f \|_{H^{s}(M)} + \| u \|_{L^2(M)} \Big).
\end{equation}
(This is readily deduced from the corresponding `interior' estimate on open subsets of $\RR^n$.) Moreover, if $P$ is invertible, then we can remove the $L^2$ norm of $u$ on the RHS. We need just a minor semiclassical variant of this estimate:  we use semiclassical Sobolev spaces $H^s_h(M)$, $s \in \NN$,  with norms defined by 
$$
\| u \|_{H^s_h(M)}^2 = \sum_{|\alpha| \leq s} \| (h D)^\alpha u \|_{L^2(M)}^2;
$$
these norms are equivalent to the usual norm for each fixed $h$, but not uniformly as $h \to 0$. Then if $P$ is a semiclassical elliptic operator, and $Pu = f$, we get estimates in these spaces 
\begin{equation}
\| u \|_{H^{s+2}_h(M)} \leq C \Big( \| f \|_{H^{s}_h(M)} + \| u \|_{L^2(M)} \Big)
\label{sclSob}\end{equation}
with $C$ independent of $h$ \cite[Theorem 7.1]{zworski}. Moreover, if $P$ is invertible in a uniform sense, for example if $P \geq 1$, then we can remove the $u$ term on the RHS. 
\end{itemize}

\subsection{Related eigenfunction literature}\label{subsec:lit} 

Estimates on the concentration of eigenfunctions of the Laplacian and related operators have a rich literature, see for example  surveys by Zelditch \cite{Z09} or Sogge \cite{sogge15}, and the books \cite{MR1205579}, \cite{MR3186367} by Sogge. As the literature is quite large, we mention only a few of the key related papers below. 

The `modern history' of eigenfunction estimates on manifolds begins with Sogge's classic 1988 paper \cite{sogge88}, in which he obtained sharp $L^p$ estimates for eigenfunctions on compact manifolds for all $p \in [2, \infty)$. The key approach was to estimate the operator norm, acting from $L^2$ to $L^p$, of a smoothed spectral projector. These estimates hold for spectral clusters (combinations of eigenfunctions within a frequency interval of unit length) just as well as they do for eigenfunctions. The analogous estimates on manifolds with boundary have been studied, but are complicated by phenomena such as `whispering gallery modes', which prevent such results holding in generality, for example on convex domains. However in \cite{SS95} Smith and Sogge proved that if boundary is everywhere strictly geodesically concave the results from boundaryless manifolds hold. In the general case, where the boundary may be convex, Smith and Sogge \cite{SS07} proved a sharp set of bounds for dimension two. They were also able to obtain results in higher dimensions but it is not known whether those results are sharp. In \cite{Blair12} Blair obtained estimates for the restriction of eigenfunctions to hypersurfaces in the low regularity setting (which includes the boundary case). Extreme concentration of eigenfunctions on manifolds was studied in \cite{soggegrowth, SZ2}.    See also the related work on  Strichartz estimates on manifolds with boundary by Blair, Smith and Sogge \cite{BSS06,BSS08}.

Estimates on the boundary values of eigenfunctions, or quasimodes (approximate eigenfunctions) have been obtained as a by-product of estimates on solutions of the wave equation. For example, estimates on the normal derivatives of Dirichlet eigenfunctions can be read off from the main result of \cite{BLR}. The case of Neumann boundary conditions was studied by Tataru in \cite{Tataru}, \cite{Tataru-1998}. As there is considerable overlap between these results and those of the present paper, we describe them in more detail. 
In \cite{Tataru}, Tataru showed that a certain norm on the boundary values of a solution to the wave equation is bounded by the interior $H^1$ norm. Specializing to time-harmonic solutions, and using our notation, it shows that the norm 
\begin{equation}
h^{1/3} \| u \|_{L^2(\dOmega)} + \| (1 - h^2 \Deltab)^{1/2}_+ u \|_{L^2(\dOmega)} +  \| (1 - h^2 \Deltab)^{1/2}_- u \|_{L^2(\dOmega)}
\label{Tataru-norm}\end{equation}
 is bounded by the interior $L^2$ norm. In \cite{Tataru-1998}, an argument is sketched to show that this is in fact equivalent to the interior $L^2$ norm. A priori, \eqref{Tataru-norm} is larger than our $G_h(\Deltab)$ norm, so Tataru's result is stronger than our Theorem~\ref{Neu-est} for the upper bound but  a little weaker for the lower bound.\footnote{Since both norms are equivalent (on eigenfunctions) to the interior $L^2$ norm, they are equivalent to each other, but this is not a priori clear. The direct equivalence of these two boundary norms can be deduced from our exterior mass estimate in Section~\ref{sec:eme}.} We note that the method of proof is quite different: Tataru's proof uses rather sophisticated microlocal analysis, while ours uses no microlocal analysis at all. 
 % while the $h^{-1/3}$ estimate from Section~\ref{sec:1/3-bound} is a consequence of the $H^{1/3}$ Sobolev estimate for boundary traces of solutions of the wave equation in \cite{Tataru}. 
 
  Estimates on normal derivatives of Dirichlet eigenfunctions were also obtained in \cite{HT}, where the energy method used here in Section~\ref{sec:1/3-bound} was used. Ergodic properties of boundary values of eigenfunctions were studied in \cite{gerard93}, \cite{hassell}, \cite{Bu}. 

Another strand of current research is to estimate eigenfunctions restricted to interior submanifolds, which was studied in 
\cite{BGT07}, \cite{tacy09}, \cite{Blair12}, \cite{MR3340369} and in the ergodic  case in \cite{MR2913617}, \cite{MR3053760}.

A slightly different framework for obtaining estimates on quasimodes of more general semiclassical operators was introduced by  Koch, Tataru and Zworski \cite{koch07}. These results have been extended in \cite{tacy09}, \cite{MR2868958}.

\subsection{Plan of this paper}

The plan of the rest of this paper is as follows. In Section~\ref{sec:1/3-bound} we prove a bound $C h_j^{-1/3}$ for the $L^2$ norm of $u^N_j$ restricted to $\dOmega$. This result is implicit in Tataru's work on boundary traces of solutions of the wave equation \cite{Tataru} mentioned above, but as the result is not particularly accessible in the literature we give a direct proof here based on energy estimates, following the paper \cite{HT}. In Section~\ref{sec:modE} we introduce a modified energy and prove an upper bound on the `$\chi_1$' part (see \eqref{Gh}) of $G_h(1-h^2 \Delta_{\dOmega})u^N_j$, that is, the part in the classically allowed region.  
Using the modified energy and the $h^{-1/3}$ upper bound we prove the upper bound in Theorem~\ref{Neu-est}. 
In Section~\ref{sec:eme} we prove an estimate on the exterior mass, that is, on the  part of $G_h(1-h^2 \Delta_{\dOmega})u^N_j$ that is  sufficiently far (at the scale $h^{2/3}$) from the classically allowed region. In Section~\ref{sec:lowerbound}, we use the exterior mass estimate to prove the lower bound from Theorem~\ref{Neu-est}. In Sections~\ref{sec:lowerbound}--\ref{sec:inclusion} we prove Theorem~\ref{thm:qo}, and finally the Neumann inclusion bound, Theorem~\ref{MPS-Neu}. 

In the last section, we present an improved implementation of the MPS for the 
high-accuracy computation of Neumann eigenvalues of smooth planar domains,
which exploits our tight inclusion bounds for the tension $\tilde{t}_h$. % and correctly handles the interior norm.
In one example our theorem leads to error bounds that are improved by 5 digits over the existing Neumann MPS 
with little extra numerical effort.

%%%%%%%%%%%%%%%%%%%%%%%%%%%%%%%%%%
%%%%%%%%%%%%%%%%%%%%%%%%%%%%%%%%%%
%%%%%%%%%%%%%%%%%%%%%%%%%%%%%%%%%%

%%%%%%%%%%%%%%%%%%%%%%%%%
\section{The $h^{-1/3}$ bound on boundary values of Neumann eigenfunctions}\label{sec:1/3-bound}

In this section, we establish Tataru's $h^{-1/3}$ estimate for the $L^2$ norm of Neumann or Robin eigenfunctions restricted to the boundary, which is just the upper bound for the first term of \eqref{Tataru-norm}. We adapt the method from Hassell--Tao \cite{HT}. Since the boundary of $\Omega$ is assumed to be smooth, there is a collar neighbourhood on which we have Fermi coordinates $r$ and $y$, where $r \in [0, \delta]$ is distance to the boundary, a smooth function for $\delta > 0$ sufficiently small, and $y$ is constant on geodesics normal to the boundary. We denote such a collar by $\Cd$, and we denote the set of points at distance $r$ from the boundary by $Y_r$; this is a smooth submanifold for $r \leq \delta$. The advantage of Fermi coordinates is that, in these coordinates the vector fields $\partial_r$ and $\partial_{y_j}$ are orthogonal; thus the metric takes the form 
$$
g = dr^2 +  k_{ij}(r) dy_j dy_j
$$
where we sum (from $1$ to $n-1$) over repeated indices. 

\begin{remark} In this section, and the next, $\Omega$ can be any compact Riemannian manifold with smooth boundary. Indeed, all the upper bound results in this paper hold in this generality. However, for the lower bound to hold, it is known in the Dirichlet case that one must have a `nontrapping' or `geometric control' condition, that is, that all billiard trajectories of some given length $T$ meet the boundary nontangentially --- see \cite{BLR} and \cite{HT}. We intend to explore this further in a future article. 
\end{remark}

To analyze the Laplacian, it is convenient to make a unitary transformation $U$  from $L^2(\Cd)$, with the Riemannian measure, to $L^2([0, \delta] \times \dOmega)$ with the product measure $dr d\gd$, where $\gd$ is the induced metric on $\dOmega$. This is done by multiplication with a suitable factor,
$$
L^2(\Cd) \ni u \mapsto v  = Uu := u \gamma, \quad \gamma(r, y) = \Big( \frac{ \det k_{ij}(r, y)}{\det k_{ij}(0, y)} \Big)^{1/4}.
$$
Here, $\gamma$ is a well-defined function on $\Cd$, independent of the choice of local coordinates $y$. 
Then a standard calculation shows that the conjugation 
$U (h^2 \Delta - 1) U^{-1}$ takes the form
\begin{equation}
U (h^2 \Delta - 1) U^{-1} = -(h^2\partial_r^2 + \tilde P(r)),
\label{tconj}\end{equation}
where $\tilde P(r)$ is a self-adjoint, elliptic semiclassical differential operator on $\dOmega$, with coefficients depending on $r$. In fact, it takes the form in local coordinates 
$$
\tilde P(r) = 1+ h^2  A^{-1} \partial_{y_i} \Big( k^{ij}(r) \partial_{y_j} A \, \cdot \Big)+ h^2 b  , \quad A = (\det k_{ij}(0,y))^{1/4}, \quad b \in C^\infty(\Cd). 
$$
Here $k^{ij}$ is the inverse matrix of $k_{ij}$. Moreover, we claim that
\begin{equation}
\tilde P(0) =  1-h^2 \Delta_{\dOmega} + h^2 b',
\label{tP0}\end{equation}
where $b'$ is a smooth function. 
To see why \eqref{tP0} is true, we first conjugate by $(\det k_{ij}(r, y))^{1/4}$, and afterward by $(\det k_{ij}(0, y))^{-1/4}$.
It is well known that the first conjugation reduces the Laplacian to divergence form, plus a zeroth order term, i.e. the form
$$
 -\partial_r^2 - \partial_{y_i} \Big( k^{ij}(r, y) \partial_{y_j} \, \cdot \Big)  + b .
$$
Now consider the conjugation by $(\det k_{ij}(0, y))^{1/4}$. This term commutes with $\partial_r$, so it produces an operator of the form 
\begin{equation}
-\partial_r^2 + \text{ (tangential derivatives of order $1$ and $2$) } + b'. 
\label{Pform}\end{equation}
In particular, there are no first order $r$-derivatives. On the other hand, the factor $\gamma$ is equal to $1 + O(r)$ at the boundary. 
At the boundary, the Laplacian in Fermi coordinates takes the form
$$
-\partial_r^2 - a \partial_r + \Delta_{\dOmega}.
$$
Conjugating with $1 + O(r)$ changes this, at $r=0$, to 
$$
-\partial_r^2 - a' \partial_r + \Delta_{\dOmega} + b'.
$$
Here $a, a', b'$ are smooth functions on $M$. 
Comparing this to the form \eqref{Pform}, we see that  $a' = 0$. This establishes \eqref{tP0}. (Moreover, we could identify $b'$ explicitly in terms of the mean and scalar curvatures of $\partial \Omega$ --- see \cite[end of proof of Theorem 1]{MR3396090} for a similar calculation.) 

We now define
\begin{equation}
P(r) = \tilde P(r) - h^2 b',
\label{Prdef}\end{equation}
so that 
\begin{equation}
P(0) =  1-h^2 \Delta_{\dOmega} 
\label{P0}\end{equation}
and 
\begin{equation}
U (h^2 \Delta - 1) U^{-1} = -(h^2\partial_r^2 + P(r) + h^2 b').
\label{conj}\end{equation}

Now suppose that $u$ is a real-valued approximate eigenfunction in the sense that 
\begin{equation}\begin{gathered}
(h^2\Delta - 1) u = w, \quad \| u \|_{L^2(\Omega)} = 1, \\
\| w \|_{L^2(\Omega)} \leq c h, \\ % \quad \| \nabla w \|_{L^2(\Omega)} \leq c_1 k^2, \\
d_r u = 0  \text{ at } \dOmega.  
\end{gathered}\label{approxefn}\end{equation}

We let $v = Uu$. Then on $[0, \delta] \times \dOmega$,  $v$ satisfies the equation
\begin{equation}
-(h^2 \partial_r^2 + P(r))v = w' := Uw +  h^2 b' Uu, \quad \| w' \|_{L^2([0, \delta] \times \dOmega)} \leq c h.
\label{veqn}\end{equation}
In preparation for our $h^{-1/3}$ estimate on the boundary values of Neumann eigenfunctions, we prove an estimate on the following energy functional introduced (apart from some inessential changes) in \cite{HT}. For each $r \in [0, \delta]$, and for $v = Uu$ as above, we define the energy $E(r)$ given by
\begin{equation}
E(r) = h^2 \ang{v_r, v_r}_{\{ r \} \times \dOmega} + \ang{Pv, v}_{\{ r \} \times\dOmega} = \int_{Y_r} \big(h^2 v_r^2 + (Pv) v \big) \, d\gd . 
\label{E1}\end{equation}
In \eqref{E1}, we consider the functions $v_r$ and $v$ as defined on the product $[0, \delta] \times \dOmega$, with the product measure $dr d\gd$. That is, the inner product in \eqref{E1} is given by that on $\dOmega$, through our identification of $Y_r$ and $\dOmega$ along geodesics normal to the boundary. The inner product is 
thus fixed, independent of $r$, which makes calculating the $r$-derivative of $E(r)$ more straightforward. Notice that $E(r)$ does not have a fixed sign, as the operator $P$ has spectrum in $(-\infty, 1]$. 

\begin{remark} The energy above is a formal analogue, in the elliptic setting, for the well-known energy for the wave equation, which is 
$
\int \big( |u_t(x, t)|^2 + |\nabla u(x, t)|^2 \big) \, dx
$
integrated over a time slice. 
Similar energies have been used elsewhere, for example \cite[Eq. (3.14)]{Katogrowth}, \cite[below Eq. (2.9)]{Datchev}, etc.
\end{remark}

\begin{lem}\label{lem:Energy} Suppose that $u \in C^2(\overline{\Omega})$ and that $u$ satisfies the Neumann boundary condition $d_n u = 0$ at $\dOmega$. Then there is a constant $C$ depending only on $\Omega$ such that for all $h \leq 1$, the energy \eqref{E1} for $v = Uu$ satisfies 
\begin{equation}|E(r)| \leq C \Big( \| u \|_{L^2(\Omega)}^2 + h^{-2} \| (h^2 \Delta - 1) u \|_{L^2(\Omega)}^2 \Big), \quad r \in [0, 2\delta/3].
\label{Ebound}\end{equation}
\end{lem}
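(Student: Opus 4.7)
My plan is to control $|E(r)|$ pointwise by first bounding the integrals $\int_0^{2\delta/3}|E(r)|\,dr$ and $\int_0^{2\delta/3}|E'(r)|\,dr$, and then converting to a pointwise bound via a simple averaging argument. The principal obstacle is that $E(r)$ has no definite sign (since $P(r)$ has spectrum in $(-\infty,1]$), so a direct Gronwall-type argument on $E$ itself is unavailable.

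The first step is the energy identity. Differentiating \eqref{E1}, using self-adjointness of $P(r)$, and substituting $h^2 v_{rr}=-P(r)v-w'$ from \eqref{veqn}, the cross terms $2\langle P(r)v,v_r\rangle$ and $2\langle v_r,P(r)v\rangle$ cancel, leaving
\begin{equation}
E'(r) \;=\; \langle P'(r)v,v\rangle_{Y_r} \;-\; 2\langle v_r,w'\rangle_{Y_r}.
\label{EprimeSketch}
\end{equation}
Since $P(r)=1+h^2 L(r)$ for a smooth $r$-dependent family of second-order elliptic operators $L(r)$ on $\partial\Omega$, the derivative $P'(r)=h^2 L'(r)$ is itself a second-order operator of the same type. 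Integration by parts once on the closed manifold $\partial\Omega$ then yields
\begin{equation}
|\langle P'(r)v,v\rangle_{Y_r}| \;\leq\; C\bigl(\|h\nabla_{\partial\Omega}v\|^2_{Y_r}+h^2\|v\|^2_{Y_r}\bigr).
\end{equation}

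The critical global ingredient is the elliptic estimate
\begin{equation}
\|h\nabla u\|^2_{L^2(\Omega)} \;\leq\; C\bigl(\|u\|^2_{L^2(\Omega)}+\|w\|^2_{L^2(\Omega)}\bigr),
\label{ellestSketch}
\end{equation}
obtained by pairing $(h^2\Delta-1)u=w$ with $u$ and integrating by parts: the resulting boundary term $h^2\int_{\partial\Omega}(d_n u)u$ vanishes by the Neumann hypothesis, which is precisely where that assumption is used. Since $v=\gamma u$ with $\gamma$ smooth, \eqref{ellestSketch} controls $\|v\|_{L^2}$ and $\|h\nabla_{\partial\Omega}v\|_{L^2}$ on $[0,2\delta/3]\times\partial\Omega$, and also gives $\|v_r\|_{L^2}^2\leq Ch^{-2}(\|u\|^2+\|w\|^2)$ (the extra $h^{-1}$ arising because $v_r$ is an unscaled derivative). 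Substituting these bounds into \eqref{EprimeSketch}, using the trivial $\|w'\|_{L^2}\leq C(\|w\|_{L^2}+h^2\|u\|_{L^2})$, and applying Cauchy--Schwarz to the $\langle v_r,w'\rangle$ term produces
\begin{equation}
\int_0^{2\delta/3}\bigl(|E(r)|+|E'(r)|\bigr)\,dr \;\leq\; C\bigl(\|u\|^2_{L^2(\Omega)}+h^{-2}\|w\|^2_{L^2(\Omega)}\bigr),
\label{EintbdSketch}
\end{equation}
the factor $h^{-2}$ arising precisely from that last Cauchy--Schwarz.

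To conclude, for any $r,s\in[0,2\delta/3]$ the triangle inequality gives $|E(r)|\leq|E(s)|+\int_0^{2\delta/3}|E'(t)|\,dt$; averaging in $s$ over $[0,2\delta/3]$ then yields
$$
|E(r)|\;\leq\;(2\delta/3)^{-1}\!\!\int_0^{2\delta/3}\!\!|E(s)|\,ds+\int_0^{2\delta/3}\!\!|E'(t)|\,dt,
$$
which by \eqref{EintbdSketch} is bounded by $C(\|u\|^2+h^{-2}\|w\|^2)$, establishing \eqref{Ebound}.
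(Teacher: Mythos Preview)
Your proof is correct and follows essentially the same route as the paper: compute $E'(r)$ via the equation for $v$, bound both $\int|E|$ and $\int|E'|$ using integration by parts on $\langle Pv,v\rangle$ and $\langle P'v,v\rangle$ together with the global elliptic estimate $\|h\nabla u\|^2\leq C(\|u\|^2+\|w\|^2)$ (which uses the Neumann condition), and then pass to a pointwise bound. The only cosmetic difference is the last step: the paper introduces a cutoff $\eta$ equal to $1$ on $[0,2\delta/3]$ and vanishing at $\delta$, writing $|E(s)|=|\eta(s)E(s)|\leq\int_0^\delta|\partial_r(\eta E)|\,dr$, whereas you use the averaging identity $|E(r)|\leq(2\delta/3)^{-1}\int|E(s)|\,ds+\int|E'|\,dt$; both are equivalent standard devices.
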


\begin{proof}
To prove the lemma, we choose a smooth function $\eta$ such that $\eta = 1$ on $[0, 2\delta/3]$ and $\eta(r) = 0$ for $r \geq \delta$. Then we have
\begin{equation}\begin{gathered}
|E(s)| \leq \int_s^\delta \big| \frac{d}{dr} \eta(r) E(r) \big| \, dr \\
\leq \int_0^\delta \Big( |E(r)| + \big| \frac{dE}{dr} \big| \Big)  \, dr.
\end{gathered}\label{E2}\end{equation}
To estimate $\int |E(r)|$, we note that $P(r)$ only involves tangential derivatives, so we can integrate by parts once and estimate
$$
 |E(r)|   \leq C\int_{Y_r} \big( v^2 + h^2|\nabla v|^2 \big) \leq C' \int_{Y_r} \big( u^2 + h^2|\nabla u|^2 \big) . 
$$

To estimate the derivative term $|E'(r)|$, we compute, as in \cite{HT}, with the dot indicating derivative with respect to $r$ 
\begin{equation}\begin{gathered}
\frac{dE}{dr} = 2h^2 \ang{v_{rr}, v_r} + 2 \ang{P(r) v, v_r} + \ang{\dot P(r) v, v}. 
\end{gathered}\label{E3}\end{equation}
%Putting \eqref{E2} together with \eqref{E1} and \eqref{E3}, we find that 
(Notice that, since we transformed our measure to be independent of $r$, there is no derivative term due to the change of measure.) Using \eqref{veqn}, we find that
\begin{equation}\begin{gathered}
\frac{dE}{dr} = -2 \ang{w', v_r} + \ang{\dot P(r) v, v }, \quad w' = U \big( (h^2 \Delta - 1) u + h^2 b' u\big). 
\end{gathered}\label{E3.5}\end{equation}
We treat the $\dot P(r)$ term as with the $\ang{P(r) v, v}$ term above, and find that 
\begin{equation}
|E(r)| + |E'(r)| \leq C\int_{Y_r} \big( u^2 + h^2|\nabla u|^2 - h^{-1} (w + h^2 b' u) (h u_r) \big). 
\label{E+E'}\end{equation}
Substituting into \eqref{E2} we find that 
\begin{equation}
|E(s)| \leq C\int_{\Cd} \big( u^2 + h^2|\nabla u|^2 - h^{-1} (w + h^2 b' u) (h u_r) \big), \quad s \in [0, 2\delta/3]. 
\label{E4}\end{equation}
Now we estimate 
\begin{equation}
h^2 \int_{\Omega} |\nabla u|^2 =  \int_{\Omega} u ( h^2 \Delta u ) = \int_{\Omega} u (u + w) \leq  \| u \|_{L^2(\Omega)}^2 +  \| u \|_{L^2(\Omega)} \| w \|_{L^2(\Omega)},
\end{equation}
where $w = (h^2 \Delta - 1) u$, 
which implies that 
\begin{equation}
h^2 \int_{\Cd} |\nabla u|^2 \leq C \Big( \| u \|_{L^2(\Omega)}^2 + h^{-2} \| w \|_{L^2(\Omega)}^2 \Big).
\label{nablau}\end{equation}
Combining \eqref{E4} and \eqref{nablau} proves the lemma. 
\end{proof}

\begin{prop}\label{1/3}
Suppose that $u \in C^2(\overline{\Omega})$ and that $u$ satisfies the Neumann boundary condition $d_n u = 0$ at $\dOmega$. Then there is a constant $C$ depending only on $\Omega$ such that for all $h \leq 1$, and all $r \in [0, \delta/3]$, we have 
\begin{equation}
\| u \|_{L^2(Y_r)}^2 \leq C h^{-2/3}\Big( \| u \|_{L^2(\Omega)}^2 + h^{-2} \| (h^2 \Delta - 1) u \|_{L^2(\Omega)}^2 + \| \nabla (h^2 \Delta - 1) u \|_{L^2(\Omega)}^2 \Big) .
\label{Tataru-bound}\end{equation}
In particular, for normalized Neumann eigenfunctions $u$, with eigenvalue $h^{-2}$, we have
\begin{equation}
\| \ud \|_{L^2(\dOmega)} \leq C h^{-1/3}. 
\label{eq:Neumannbound}\end{equation}
\end{prop}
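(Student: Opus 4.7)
The plan is to combine Lemma~\ref{lem:Energy} with a decomposition of the boundary data by tangential frequency, in the spirit of the Dirichlet energy method of \cite{HT}. Setting $\mathcal{E}:=\|u\|^2_{L^2(\Omega)}+h^{-2}\|(h^2\Delta-1)u\|^2_{L^2(\Omega)}$, Lemma~\ref{lem:Energy} gives $|E(r)|\le C\mathcal{E}$ uniformly for $r\in[0,2\delta/3]$. Passing to $v=Uu$ on $[0,\delta]\times\pO$, the Neumann condition becomes $v_r(0,y)=\gamma_r(0,y)\,v(0,y)$, so $h^2\|v_r(0)\|^2_{L^2(\pO)}=O(h^2)\|u|_\pO\|^2$, and \eqref{P0} yields
$$E(0) \;=\; \bigl(1+O(h^2)\bigr)\|u|_\pO\|^2_{L^2(\pO)} \;-\; h^2\|\nabla_\pO u|_\pO\|^2_{L^2(\pO)}.$$
The tangential-gradient term has the unfavorable sign, so the energy bound does not directly control $\|u|_\pO\|^2$.

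To get around this, I decompose $u|_\pO = u_{\rm hyp}+u_{\rm gl}+u_{\rm ev}$ by tangential spectral support for $h^2\Delta_\pO$, respectively in $\mu\in[0,\,1-h^{2/3}]$, $[1-h^{2/3},\,1+h^{2/3}]$, and $[1+h^{2/3},\,\infty)$; these three pieces are mutually orthogonal in $L^2(\pO)$. On $u_{\rm hyp}$, the operator $1-h^2\Delta_\pO$ is bounded below by $h^{2/3}$, so $\|u_{\rm hyp}\|^2\le h^{-2/3}\langle(1-h^2\Delta_\pO)u_{\rm hyp},u_{\rm hyp}\rangle$, and orthogonality reduces the right-hand side to a quantity controlled by $E(0)$, giving $\|u_{\rm hyp}\|^2\le Ch^{-2/3}\mathcal{E}$. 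On $u_{\rm ev}$, the projected equation $h^2 v_{rr}+P(r)v\approx 0$ is coercive because $P(r)\le -h^{2/3}$ on this spectral subspace, so solutions with Neumann data at $r=0$ grow like $\cosh(cr/h^{2/3})$ into the interior; the $L^2(\Omega)$ bound then forces $\|u_{\rm ev}\|^2$ to be exponentially small in $h$. On the glancing piece $u_{\rm gl}$, an Airy rescaling at the turning point $\mu=1$ shows that each mode has a boundary-to-bulk $L^2$ ratio bounded by $Ch^{-2/3}$ (reflecting the Neumann Airy solution's peak at the caustic), and summing over the $O(h^{2/3})$-wide spectral band yields $\|u_{\rm gl}\|^2\le Ch^{-2/3}\mathcal{E}$.

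For the statement with general $r\in(0,\delta/3]$, the same decomposition is applied at the slice $Y_r$, using Lemma~\ref{lem:Energy} to bound $|E(r)|$. The extra $\|\nabla(h^2\Delta-1)u\|^2$ term in the conclusion appears when relating the slice energy at $r>0$ back to boundary quantities via $E(0)-E(r)=2\int_0^r\langle w',v_s\rangle\,ds-\int_0^r\langle\dot Pv,v\rangle\,ds$: integrating the first integral by parts in $s$ converts $\langle w',v_s\rangle$ into $\langle\partial_s w',v\rangle$, which is bounded by $\|\nabla(h^2\Delta-1)u\|_{L^2(\Omega)}\cdot\|v\|_{L^2(\Cd)}$.

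The main obstacle is the glancing regime: a naive Cauchy--Schwarz or Gronwall argument from the energy bound alone loses $h^{-1}$, and the sharp $h^{-2/3}$ improvement requires the Airy scaling at the turning point (equivalently, the $O(h^{2/3})$ thinness of the glancing spectral band). Arranging the cross terms between the three spectral regions so that none contributes more than $h^{-2/3}$ is the other bookkeeping point that needs care.
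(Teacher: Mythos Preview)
Your frequency decomposition is a natural idea, but as written it has a genuine circularity and several steps that do not go through without substantially more work than you indicate.

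The circularity is in the hyperbolic piece. You write $\|u_{\rm hyp}\|^2\le h^{-2/3}\langle(1-h^2\Delta_\pO)u_{\rm hyp},u_{\rm hyp}\rangle$ and then say orthogonality reduces the right-hand side to something controlled by $E(0)$. But the spectral pieces of $\langle P(0)v,v\rangle$ sum to $\langle P(0)v,v\rangle$, not to bounds on each piece separately: the evanescent contribution $\langle P(0)u_{\rm ev},u_{\rm ev}\rangle$ is \emph{negative} and a priori unbounded below, so the hyperbolic piece could be huge with the total still $\le C\mathcal{E}$. You therefore need the evanescent bound \emph{before} the hyperbolic one, not after. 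Your evanescent argument, in turn, is only a heuristic: the exponential-growth picture is correct morally, but $P(r)$ genuinely depends on $r$, so the spectral projector at $r=0$ does not commute with the evolution, and a mode that is evanescent at $r=0$ need not remain so. Making this rigorous requires exactly the kind of Helffer--Sj\"ostrand commutator machinery the paper develops in later sections (and that machinery itself \emph{uses} the $h^{-1/3}$ bound as input). The glancing ``Airy rescaling'' has the same variable-coefficient issue, and the claim that the $O(h^{2/3})$ band width converts a per-mode $h^{-2/3}$ ratio into a total $h^{-2/3}$ bound is not right as stated: summing over modes in a band does not give a factor of the band width.

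The paper avoids all of this by a completely different and much shorter route. It sets $L(r)=h^2\|v(r,\cdot)\|^2_{L^2(\pO)}$, computes $\ddot L\ge \dot L^2/L - C$ (using the equation and Lemma~\ref{lem:Energy}), and runs a feedback argument in the spirit of \cite{HT}: first $\dot L>0\Rightarrow\dot L^2<4CL$, then $\dot L^2\le 5CL$ always, hence $|d\sqrt{L}/dr|\le C'$. If $L(r_0)>MCh^{4/3}$ for large $M$, this forces $L$ to stay large on an interval of length $\sim h^{2/3}$, contradicting $\int_0^\delta L\,dr\le h^2$. No spectral decomposition, no microlocal analysis, and the $\|\nabla w\|$ term enters only through the trace estimate $\|w'\|_{L^2(Y_r)}\|v\|_{L^2(Y_r)}\le C$ needed in the $\ddot L$ computation.
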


\begin{remark} The exponent $1/3$ sharp as shown by the example of the disc. 
\end{remark}

\begin{remark} This result is essentially contained in Tataru \cite{Tataru}. However, \eqref{Tataru-bound} and \eqref{Tataru-bound-improv} are slight variants which will be convenient in Section~\ref{sec:inclusion}. 
% follows from Tataru; however, we give a more elementary proof here using only energy methods.
\end{remark}

\begin{remark} This result can be generalized in various ways. For example, we can prove the same result for boundary conditions of the form $hd_n u= b u$, where $b \geq -c_2$. Then we obtain the same estimate with constant $C$ now depending on $c_2$ as well as $c_1$ and $\Omega$. %This extension is very easy in the case of  nonnegative $b$, while for $-c_2 /h \leq b \leq 0$ it requires the weaker estimate $\| u \|_{L^2(\dOmega)} \leq C (1 + k^{1/2})$, which essentially follows from the Sobolev trace embedding. 

Another straightforward generalization is that the operator $h^2 \Delta - 1$ can be a more general semiclassical operator, e.g. $h^2 \Delta + V(x) - 1$.
\end{remark}

\begin{proof}
For notational convenience, we scale $u$ so that 
$$
 \| u \|_{L^2(\Omega)} + h^{-1} \| (h^2 \Delta - 1) u \|_{L^2(\Omega)} + \| \nabla (h^2 \Delta - 1) u \|_{L^2(\Omega)}  = 1.
 $$
 Then we need to show that $\| u \|_{L^2(Y_r)} \leq C h^{-1/3}$. 

As in the proof of Lemma~\ref{lem:Energy}, we switch to $v= Uu$. 
Following \cite{HT}, we define
$$
L(r) = h^2 \int_{Y_r} v^2 \, dy. 
$$
We can compute, with dots indicating derivatives in $r$, 
\begin{equation}
\dot L(r) = 2h^2 \int_{Y_r}  v v_r \, dy
\label{L'}\end{equation}
and 
\begin{equation}\begin{gathered}
\ddot L(r) = 2h^2  \int_{Y_r} (v_r^2 + v v_{rr} ) \, dy \\
= 2 \int_{Y_r} 4h^2 v_r^2 - 2 E(r) \pm 2 w' v \\
\geq \frac{\dot L(r)^2}{L(r)} - 2 E(r) - 2 C' \\
\geq \frac{\dot L(r)^2}{L(r)} - C. 
\end{gathered}\label{L-feedback}\end{equation}
Here, we used equation \eqref{veqn} for $v$ in the second line, Cauchy-Schwarz (applied to the integral in \eqref{L'}) in the third line, and \eqref{Ebound}  in the fourth line. Also, in the third line, we used the estimates 
\begin{equation}\begin{aligned}
\| w' \|_{L^2(Y_r)}^2 &\leq \| w' \|_{L^2(\Omega)} \| \nabla w' \|_{L^2(\Omega)} + C \| w' \|_{L^2(\Omega)}^2 \\
\| v \|_{L^2(Y_r)}^2 &\leq \| v \|_{L^2(\Omega)} \| \nabla v \|_{L^2(\Omega)} + C \| v \|_{L^2(\Omega)}^2 \\
%& \leq C h^{-1} \Big( \| u \|_{L^2(\Omega)}^2 + \| u \|_{L^2(\Omega)}^{3/2}  \| u \|_{L^2(\Omega)}^{1/2} \Big) 
\end{aligned}\end{equation}
(which is an immediate consequence of the fundamental theorem of calculus) to estimate $\| w' \|_{L^2(Y_r)} \leq C h^{1/2}$ and  $\| v \|_{L^2(Y_r)} \leq C h^{-1/2}$. Thus, we have 
$$
\int_{Y_r} w' v \geq - C.
$$

From this equation, we show, as in the erratum \cite{MR2661180} of Hassell--Tao,
that 
\begin{equation}
\dot L(r) > 0 \implies (\dot L(r))^2 < 4C L(r) \text{ for all } r \in [0, 2\delta/3].
\label{claim}\end{equation}
For if this is not so, then we have $\dot L(r_0) > 0$ and $(\dot L(r_0))^2 \geq 4C L(r_0)$ for some $r_0 \in [0, 2\delta/3]$. Plugging into the last line of \eqref{L-feedback}, we obtain $\ddot L(r_0) > 3C $, and this shows that
$$
\frac{d}{dr} \Big( (\dot L(r))^2 - 4C L(r) \Big) = 2 \dot L(r) \ddot L(r) - 4C \dot L(r) \geq 2C \dot L(r) > 0 \text{ for } r = r_0.
$$
This means that the properties $\dot L(r)$ positive, and $(\dot L(r))^2 \geq 4C L(r)$ persist in some interval $[r_0, r_0 + \epsilon)$, $\epsilon > 0$. By a continuity argument we see that these properties hold for all $r \in [r_0, \delta]$. But then we would have
$$
\frac{d}{dr} \sqrt{L(r)} \geq \sqrt{C}, \quad r \in [r_0, \delta]. 
$$
This implies 
$$
L(r) \geq C (r - r_0)^2, \quad r \in [r_0, \delta],
$$
which is a contradiction for sufficiently small $h$, since
\begin{equation}
\int_0^\delta L(r) \, dr \leq h^2 \int_\Omega u^2 \leq h^2.
\label{norm}\end{equation}
We deduce that \eqref{claim} holds. 

Next we improve \eqref{claim} by showing that 
\begin{equation}
(\dot L(r))^2 \leq 5C L(r) \text{ for all } r \in [0, 2\delta/3],
\label{claim2}\end{equation}
that is, we remove the assumption $\dot L(r) > 0$. 
Again we do this by contradiction. Assume \eqref{claim2} fails; then there exists 
$r^* \in (0, 2\delta/3]$ such that $\dot L(r^*)^2 > 5C L(r^*)$. Since
$L(r) = 0 \implies \dot L(r) = 0$, we see that $L(r^*) > 0$. By continuity, there is some interval $[r_*, r^*]$, $r_* < r^*$,  on which 
\begin{equation}
\text{
$\dot L(r)^2 \geq 5C L(r)$ and $L(r) \geq L(r^*)/2$ for all $r$ in $[r_*, r^*]$.}
\label{s**def}\end{equation}
 Let $r_{**}$ be the infimum of all such $r_*$. Notice that $r_{**} > 0$, as $\dot L(r_{**}) > 0$. 
%Also, we have $L(0) > 0$ and $\dot L(0) = 0$, so the first inequality fails for small $s$; therefore,   $s_{**} > 0$.
We must have either (i) 
$$
L(r_{**}) = L(r^*)/2 \text{ and } \dot L(r_{**})^2 \geq 5C L(r_{**})
$$
or (ii) 
$$
L(r_{**}) > L(r^*)/2 \text{ and } \dot L(r_{**})^2 = 5C L(r_{**})
%  \dot L(s)^2 < 5C L(s) \text{ for all } s < s_{**}. 
$$
In case (i), $\dot L(r_{**}) \geq 0$, since $L$ has its minimum value on the interval $[r_{**}, r^*]$ at $r_{**}$. From the second inequality we see that in fact $\dot L(r_{**}) > 0$. But this contradicts \eqref{claim}. So (i) is not possible. Next consider (ii). In this case, since $r_{**}$ is the infimum of $r_* \in (r_{**} - \epsilon, r^*) $ satisfying $\dot L(r_{*})^2 \geq 5C L(r_{*})$, we have 
$$
\frac{d}{dr} \Big( \dot L(r)^2 - 5C L(r) \Big) \geq 0 \text{ at } r= r_{**}.
$$
This can be rewritten
$$
2\dot L(r_{**}) \Big( \ddot L(r_{**}) - 5C/2 \Big) \geq 0.
$$
But from \eqref{L-feedback} we have $\ddot L(r_{**}) \geq \dot L(r_{**})^2/L(r_{**}) - C \geq 4C$, so this shows that
$\dot L(r_{**}) \geq 0$. As before, this implies that $\dot L(r_{**}) > 0$ and again contradicts \eqref{claim}. This  establishes \eqref{claim2}.

We are now in a position to prove that $L(r) \leq M C h^{4/3}$ for some $M$ and all  $r \leq \delta/3$ and sufficiently small $h$, which is equivalent to \eqref{Tataru-bound}. The idea is that if $L(r)$ is very big, then \eqref{claim2} shows that $L(r)$ will be remain big in some interval around $r$, which will lead to a contradiction with \eqref{norm}. 

Equation \eqref{claim2}  implies that, with a slightly bigger $C$, 
$$
\big| \frac{d}{ds} \sqrt{L(s)} \big| \leq \sqrt{C}  . 
$$
Therefore, for $r \in [0, \delta/3]$, $s \in [r, 2\delta/3]$, we have
$$
\sqrt{L(s)} \geq  \sqrt{L(r)} - \sqrt{C} (s-r)  .
$$
Now suppose that $L(r) \geq MC h^{4/3}$ with $M \geq 1$. Then we would have
$$
\sqrt{L(s)} \geq \sqrt{MC} (h^{2/3} - (s-r)) \geq \sqrt{MC} \frac{h^{2/3}}{2} \text{ for } s \in [r, r+h^{2/3}/2]
$$
provided $h$ is small enough so that $h^{2/3}/2 \leq \delta/3$. 
Squaring and integrating between $r$ and $r+h^{2/3}/2$, we find that
$$
\int_r^{r+h^{2/3}/2} L(s) \, ds \geq \int_0^{h^{2/3}/2} MC  \frac{h^{4/3}}{4} \, dr = \frac{MCh^2}{8} 
$$
which contradicts \eqref{norm} for $M > \max(1, 8/C)$.
\end{proof}

\begin{remark}\label{rem:w-improvement}
The sharp-eyed reader may notice that we can slightly strengthen the result by replacing the RHS of 
\eqref{Tataru-bound} by 
\begin{equation}\begin{gathered}
C h^{-2/3} \Big( \| u \|_{L^2(\Omega)}^2 + h^{-1} \| u \|_{L^2(\Omega)}^{1/2} \| w \|_{L^2(\Omega)}^{3/2}   \\
+ \ \big( \| u \|_{L^2(\Omega)} + \| u \|_{L^2(\Omega)}^{3/4} \| w \|_{L^2(\Omega)}^{1/4} \big)  \| w \|_{L^2(\Omega)}^{1/2}  \| \nabla w \|_{L^2(\Omega)}^{1/2}
\Big)
\end{gathered}\label{Tataru-bound-improv}\end{equation}
where $w = (h^2 \Delta - 1) u$. The key improvement here is that the $\nabla w$ term only appears with exponent $1/2$. This improvement plays a role in the proof of the inclusion bound in Section~\ref{sec:inclusion}. 
%The reason for this is that the functions $(h^2 \Delta -1)u$ and $\nabla (h^2 \Delta -1)u$ only appear linearly in the quadratic expressions estimated above, so we only need one factor of these norms in the estimate. 
\end{remark}

%\begin{remark} The assumption on $b$ can be weakened to $-k A \leq b$ for some constant $A$. Then the constant $C$ in the conclusion depends on $A$ as well as the other variables listed in the proposition. However, we make no use of this fact in the present paper. 
%\end{remark}

%We can also show
%
%\begin{lem} There is a uniform bound 
%\begin{equation}
%\int_{Y_r} v_r^2 \leq C \lambda^2, \quad r \in [0, \delta]
%\end{equation}
%\end{lem}
%
%\begin{proof} Suppose that for some  large constant $M$ we had 
%\begin{equation}
%\int_{Y_r} v_r^2 = M \lambda^2.
%\label{Ml2}\end{equation}
%We will show a contradiction for large $M$. We first show that 
%\begin{equation}
%L''(s) \geq C \frac{M}{2} \lambda^2
%\label{M/2}\end{equation}
%for $s$ in a $\lambda^{-1}$-length neighbourhood of $r$. This cannot happen, since if so, by integrating up we find that $L(s) \geq M/4 \lambda$ in a $(2\lambda)^{-1}$ neighbourhood of $r$, hence $\int L(s)$ is too large on this interval. 
%
%To show \eqref{M/2}, we look at the equation for $L''(s)$:
%$$
%L''(s) = 2 \int_{Y_s} v_r^2 - 4E(s).
%$$
%Differentiating in $s$ we find that 
%$$
%L'''(s) = 4 \int_{Y_s} v_r v_{rr} - O(\lambda^2) = 4 \int_{Y_s} v_r Qv + O(\lambda^2). 
%$$
%Using 
%$$
%\langle{v_r, Qv} = \frac1{2} \Big( \partial_r \langle {v, Qv} - 2 \langle{v, Q_r v} \Big). 
%$$
%We integrate this over an interval of length $\lambda^{
%Integrating over an interval ...... argument does not work (yet)
%
%%%%%%%%%%%%%%%%%%%%%%%%%%%%%%%%%%%%%
%%%%%%%%%%%%%%%%%%%%%%%%%%%%%%%%%%%%
%%%%%%%%%%%%%%%%%%%%%%%%%%%%%%%%%%%%

\section{Modified energy}\label{sec:modE}
In this section, we define a modified energy, using a spectral cutoff in the $P(r)$ operator, where $P(r)$ is as in \eqref{Prdef} and \eqref{conj}. We then carry out the same computation as in Lemma~\ref{lem:Energy} to show that this modified energy is also uniformly bounded in $h$. 

Recall that the semiclassical operator $P(r)$, acting on functions defined on $Y_r$, is approximately given by $1-h^2 \Delta_r$, where $\Delta_r$ is the Laplacian on $Y_r$. We shall cut off spectrally in the region where $P(r)$ is positive. We want a fairly sharp cutoff; it turns out that the correct scale is $h^{2/3}$, i.e. the cutoff function we can transition from $0$ to $1$ in an interval of length $h^{2/3}$ in the spectrum of $P(r)$ and no faster, for the estimates below to be valid. 

Let $\chi : \RR \to [0,1]$ satisfy $\chi(t) = 0$ for $t \leq 1$ and $\chi(t) = 1$ for $t \geq 2$. Let 
$f(t)$ be the $h$-dependent function of $t$ given by 
\begin{equation}
f(t) = \chi \big( \frac{t}{h^{2/3}} \big). 
\label{fdefn}\end{equation}

\begin{defn}
Let $f$ be given by \eqref{fdefn}, and let $u$ be a function on $\Cd$. 
Then the modified energy $\tE$  is defined to be
\begin{equation}
\tE(r) = \ang{f(P)hv_r, hv_r} + \ang{f(P) P v, v}, \quad v = Uu
\end{equation}
where the inner products are on $L^2(Y_r)$ with fixed Riemannian measure $d\gd$. 
\end{defn}

\begin{remark}
Notice that $f(P)$ and $f(P) P$ are positive operators. Hence $\tE(r)$, unlike $E(r)$, is a positive quantity for each $r$. Curiously, we make no use of this fact in the present paper. 
\end{remark} 

To compute with the modified energy $\tE(r)$, we need to be able to take $r$-derivatives of $f(P) = f(P(r))$. The following lemma will be useful. 

\begin{lem}\label{lem:F}
The operator $(d/dr)^k f(P)$ is bounded in operator norm by $C_k h^{-2k/3}$ for $k = 0, 1, 2, 3$. 
\end{lem}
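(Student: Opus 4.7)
The plan is as follows. The case $k=0$ is immediate from the spectral theorem since $0\le f\le 1$, hence $\|f(P)\|_{L^2\to L^2}\le 1$. For $k\ge 1$, I would first handle the non-compact support of $f = \chi(\cdot/h^{2/3})$ by a decomposition $f = f_1 + f_2$, with $f_1 := \phi f$ for a fixed cutoff $\phi\in C_c^\infty(\RR)$ equal to $1$ on $[-1,1]$ and supported in $[-2,2]$, and $f_2 := (1-\phi)f$. For small $h$, $f_2$ is $h$-independent (it equals $1-\phi$ restricted to $\{t\ge 1\}$, where $f\equiv 1$), so $f_2(P(r))$ and all its $r$-derivatives are uniformly bounded by a routine Helffer--Sj\"ostrand argument with $h$-independent constants. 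It therefore suffices to treat $f_1\in C_c^\infty(\RR)$, which satisfies $|f_1^{(j)}(t)|\le C_j h^{-2j/3}$.

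Next, I would apply the Helffer--Sj\"ostrand formula \eqref{HSform} to write
\[
f_1(P(r)) = \frac{1}{\pi}\int_{\CC}\bar\partial \tilde f_1(z)\,(z-P(r))^{-1}\,dL(z),
\]
where $\tilde f_1$ is an almost analytic extension, rescaled to the $h^{2/3}$ scale of variation so that $|\bar\partial \tilde f_1(z)| \le C_N h^{-2(N+1)/3}|\Im z|^N$ for every $N\ge 0$, with $\bar\partial\tilde f_1$ supported in the box $\{h^{2/3}\le \Re z \le 2h^{2/3},\ |\Im z|\le h^{2/3}\}$. Differentiating repeatedly in $r$ under the integral via $\partial_r(z-P)^{-1} = (z-P)^{-1}(\partial_r P)(z-P)^{-1}$ expresses $(d/dr)^k f_1(P)$ as a finite sum of integrals
\[
\int_{\CC}\bar\partial \tilde f_1(z)\,(z-P)^{-1}Q_1(z-P)^{-1}Q_2\cdots Q_m(z-P)^{-1}\,dL(z),
\]
with $1\le m\le k$ and each $Q_i = \partial_r^{j_i}P$ for some $1\le j_i\le k$.

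The key estimate is $\|Q_i(z-P)^{-1}\|_{L^2\to L^2}\le C/|\Im z|$ for $z$ in the support of $\bar\partial\tilde f_1$. This would follow from: (a) each $\partial_r^j P(r)$ is a second-order semiclassical differential operator on $\dOmega$ with smooth coefficients on the compact collar, mapping $H^2_h(\dOmega)\to L^2(\dOmega)$ uniformly in $r$ and $h$; (b) the semiclassical elliptic estimate \eqref{sclSob} applied to the genuinely elliptic operator $h^2\Deltab$, combined with $P = 1-h^2\Deltab + O(h^2)$, gives $\|u\|_{H^2_h}\le C(\|Pu\|_{L^2}+\|u\|_{L^2})$; and (c) for $u=(z-P)^{-1}v$, $\|Pu\|\le|z|\|u\|+\|v\|\le C(|z|+1)\|v\|/|\Im z|$ and $|z|\le Ch^{2/3}\le C$ on the support of $\bar\partial\tilde f_1$, so $\|(z-P)^{-1}\|_{L^2\to H^2_h}\le C/|\Im z|$. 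Combined with the trivial bound $\|(z-P)^{-1}\|_{L^2\to L^2}\le|\Im z|^{-1}$, each $k$-th derivative integrand has operator norm at most $C|\bar\partial\tilde f_1(z)|/|\Im z|^{k+1}$. Choosing $N>k$ and integrating yields
\[
\Big\|\frac{d^k}{dr^k}f_1(P)\Big\| \le C h^{-2(N+1)/3}\cdot h^{2/3}\int_0^{h^{2/3}}y^{N-k-1}\,dy \le C h^{-2k/3},
\]
completing the argument. The main subtlety will be the bookkeeping between the $h^{2/3}$-scaling of the almost analytic extension and the semiclassical regularity of $(z-P)^{-1}$, so that the various powers of $h$ conspire to give exactly $h^{-2k/3}$; in particular the fact that $P$ itself is not semiclassically elliptic forces the elliptic regularity in (b) to be applied to $h^2\Deltab$ rather than to $P$ directly.
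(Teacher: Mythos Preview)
Your approach has a genuine gap in the support claim for $\bar\partial\tilde f_1$. You assert that $\bar\partial\tilde f_1$ can be taken supported in the small box $\{h^{2/3}\le\Re z\le 2h^{2/3},\ |\Im z|\le h^{2/3}\}$, but this is impossible: your $f_1=\phi f$ also varies on the interval $[1,2]$ (where $\phi$ transitions from $1$ to $0$), so any almost analytic extension has $\bar\partial\ne 0$ near that region as well. More fundamentally, no extension $\tilde f$ with $\tilde f|_\RR=0$ for $t<h^{2/3}$ and $\tilde f|_\RR=1$ for $t>2h^{2/3}$ can have $\bar\partial\tilde f$ compactly supported in the small box, since the exterior of the box is connected and analytic continuation would force $\tilde f$ to be simultaneously $0$ and $1$ there. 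The correct support of $\bar\partial F$ (cf.\ the paper's construction) has width $O(1)$ in $\Re z$ and height $O(h^{2/3})$ in $\Im z$.

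This matters because the factor $h^{2/3}$ in your final display comes precisely from the $\Re z$-width of the support. With the true $O(1)$ width, your direct estimate $\int|\bar\partial F(z)|\,|\Im z|^{-(k+1)}\,dL(z)$ yields only $Ch^{-2(k+1)/3}$, i.e.\ you lose a factor $h^{-2/3}$ at every $k$. (One can check this already at $k=0$: the crude bound $\|f(P)\|\le\int|\bar\partial F|/|\Im z|$ gives $O(h^{-2/3})$, whereas spectral theory gives $O(1)$.) This loss is fatal for the application: in the proof of Proposition~\ref{prop:modE} the term $h^2\langle\dddot{f(P)}v,v\rangle$ must be $O(1)$, which requires exactly $\|\dddot{f(P)}\|=O(h^{-2})$, not $O(h^{-8/3})$.

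The paper recovers this missing $h^{2/3}$ by \emph{not} integrating directly. Instead, it commutes all resolvent factors to one side (generating commutator remainders, which are tracked via the classes $\mathcal{O}^k$) and then integrates by parts in $z$ using $\partial_z(P-z)^{-1}=(P-z)^{-2}$, converting $\int\bar\partial F(z)(P-z)^{-j-1}dL(z)$ into $c_j f^{(j)}(P)$. The sharp functional-calculus bound $\|f^{(j)}(P)\|=\sup_t|f^{(j)}(t)|\le Ch^{-2j/3}$ then gives the correct estimate; the unbounded differential operator factors left over are controlled by sandwiching with powers of $(2-P)^{-1}$. The integration by parts is the step your argument is missing.
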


\begin{proof}
We select an almost analytic extension of $f$,  denoted $F$. We do this as follows: using H\"ormander's method we extend the smooth, compactly supported function 
$$
g(t) = \begin{cases} 
\chi(t), \quad t \leq h^{-2/3} \\
\chi(h^{-2/3} +2-t), \quad t \geq h^{-2/3}
\end{cases}$$
to an almost analytic extension $G(z)$, satisfying
$$
\Big| \partial_z^k \overline{\partial} G(z) \Big| \leq C_{k, N} |\Im z|^N \quad \forall \, z \in \CC, \quad \forall N \in \NN. 
$$
Notice that these estimates are uniform in $h$. We
 then define 
 $$
 F(z) = F_h(z) = G(z h^{-2/3}).
 $$
 Notice that $F(t) = f(t)$ for $t \leq 1$ real, and since $P(r) \leq 1$ as an operator, it follows that $F(P) = f(P)$. 
 
 Due to the scaling in the definition of $h$, we have 
 \begin{equation}
 \Big| \partial_z^k \overline{\partial} F(z) \Big| \leq C_{k, N} h^{-2(k+N+1)/3} |\Im z|^N \quad \forall \, z \in \CC, \quad \forall N \in \NN. 
 \label{F-est}\end{equation}
In addition, we can assume that $G$ is supported in the set $[0, h^{-2/3} + 2] \times i[-1, 1]$. Consequently, $F$ is supported in $[-2h^{2/3}, 1 + 2h^{2/3}] \times i[-h^{2/3}, h^{2/3}]$, which is a set of measure $O(h^{2/3})$.

 We can express $f(P)$ in terms of $F$ using the standard Helffer-Sj\"ostrand formula \cite[Theorem 8.1]{MR1735654}
 \begin{equation}
f(P) = \frac1{2\pi} \int_{\CC} \dbar F(z) (P - z)^{-1} \, dL(z). 
\label{HSform}\end{equation}
Here the integral is over the entire complex plane $\CC$, $L(z)$ is Lebesgue measure on $\CC$ and $\dbar$ is the 
usual d-bar operator, $\partial_x + i \partial_y$. 
Using this formula we can easily express the $r$-derivatives of $f(P)$. For example, using 
$$
\dot{(P - z)^{-1}} = -(P-z)^{-1} \dot P (P-z)^{-1},
$$
we have 
\begin{equation}\begin{gathered}
\dot {f(P)} = -\frac1{2\pi} \int_{\CC} \dbar F(z) (P - z)^{-1} \dot P  (P - z)^{-1} \, dL(z), \\
\ddot {f(P)} = \frac{2}{2\pi} \int_{\CC} \dbar F(z) (P - z)^{-1} \dot P  (P - z)^{-1} \dot P  (P - z)^{-1} \, dL(z) \\
 - \frac1{2\pi} \int_{\CC} \dbar F(z) (P - z)^{-1} \ddot P  (P - z)^{-1} \, dL(z).
\end{gathered}\label{ddotfP}\end{equation}

Now let $\mathcal{O}^k$ be the set of operators $A$, parametrized by $r$, on $L^2(Y_r)$ that are expressible in the following way: 
\begin{equation}
\int_{\CC} \partial^{j_1} \dbar F(z) Q_1(z) \dots Q_l(z) \, dL(z)
\label{Ok1}\end{equation}
where 
\begin{itemize}
\item 
each $Q_i(z)$ is either $(P-z)^{-1}$ or a multi-commutator involving $P, \dot P, \ddot P, \dots \frac{d^k}{dr^k} P$, 
\item There are a total of $j_2$ factors of $(P-z)^{-1}$ and $j_3$ commutators in the product $Q_1(z) \dots Q_l(z)$, such that
\begin{equation}
j_2 \geq 1 \text{ and } 2(j_1 +j_2 - 1) - 3 j_3 \leq k;
\label{jk1}\end{equation}
\item
the total differential order of the product $Q_i(z) \dots Q_l(z)$ is nonpositive. 
\end{itemize}
For example, the expression 
$$
\int_{\CC} \partial^{2} \dbar F(z) (P-z)^{-1} [P, [\dot P, P]] (P-z)^{-1} \ddot P (P-z)^{-2} \, dL(z)
$$
in an expression of this form with $j_1=2$, $j_2=4$ and $j_3=2$, and the total differential order is $-2$, and hence this is in $\mathcal{O}^k$ for all $k \geq 4$.

Then it is straightforward to see that $(d/dr)^k f(P)$ is in $\mathcal{O}^{2k}$ for $k = 1, 2, 3$ --- for $k = 1, 2$ it is immediate from \eqref{ddotfP}. (In fact, this is true for  all $k$, but we only need $k \leq 3$ in this proof.)

\

To complete the proof of the lemma, we show that  if $A \in \mathcal{O}^k$, then $A$ is bounded by  a multiple of $h^{-k/3}$ uniformly in $r$. 

This is proved as follows: we reorder the $Q_i$ factors in \eqref{Ok1} so that all the resolvent factors are at the left. This leaves us with commutator terms, which we discuss later. 
%These are all in $\mathcal{O}^{k-1}$, since each time we commute a factor past a resolvent factor, we gain an additional resolvent factor (increasing $j_2$ by 1) but we gain a commutator, increasing $j_3$ by $1$. 
Then, for the expression with all the resolvent factors at the left, we integrate by parts $j_2-1$ times, obtaining an expression of the form 
$$
\int_{\CC} \partial^{j_1+j_2-1} \dbar F(z) (P-z)^{-1} \, dL(z) \circ  Q $$
where $Q$ is $h^{j_3}$ times a semiclassical differential operator, say $Q = h^{j_3} \tilde Q$. Here we use the key fact in semiclassical analysis that each time we take a commutator between two semiclassical differential operators, we gain a power of $h$ (since the order as a differential operator decreases by $1$, leaving an $h$ unpaired with a derivative).  This is equal to a constant times
$$
h^{j_3} f^{(j_1 + j_2 -1)}(P) \circ \tilde Q. 
$$
Let $2d$ be the differential order of $\tilde Q$. Then, note that $2-P(0) \geq 1 + O(h^2)$ using \eqref{P0}, hence $2 - P \geq 1 + O(h^2) + O(r)$, and so $2-P$ is therefore invertible for $r$ and $h$ small. So we can write this as
$$
h^{j_3} f^{(j_1 + j_2 -1)}(P) (2- P)^d \circ (2- P)^{-d} \tilde Q. 
$$
The operator $h^{j_3} f^{(j_1 + j_2 -1)}(P) (2- P)^d$ has an operator norm bound 
\begin{multline}
h^{j_3} \| f^{(j_1 + j_2 -1)}(P) (2- P)^d \|_{L^2(\dOmega) \to L^2(\dOmega)} \\ = h^{j_3} \sup_{t \in (-\infty, 1]} f^{(j_1 + j_2 -1)}(t) (2- t)^d  = O(h^{j_3 -2(j_1 + j_2-1)/3}). 
\label{fderivbound}\end{multline}
On the other hand, using the mapping properties of $(2-P)$ and $Q$ on semiclassical Sobolev spaces, as discussed in the Introduction, we see that $(2 - P)^{-d}\tilde Q$ is a bounded map from $L^2$ to $L^2$ with a bound uniform in $h$ as $h \to 0$. Indeed, to show this, it is sufficient to show a uniform bound on the adjoint $\tilde Q^* (2-P)^{-d}$. From  \eqref{sclSob},  we see that $(2-P)^{-d}$ maps $L^2(\dOmega)$ to $H^{2d}_h(\dOmega)$ with a bound independent of $h$, and $\tilde Q^*$ maps $H^{2d}_h(\dOmega)$ to $L^2(\dOmega)$ with a bound independent of $h$. Combining this observation with \eqref{fderivbound} and  \eqref{jk1} we see that the operator norm of this term is $O(h^{-k/3})$. 

We are left with commutator terms. The key point to note is that commuting a multicommutator term $Q_i$ past a resolvent term $(P-z)^{-1}$ gives us the factor 
$$
[(P-z)^{-1}, Q] = (P-z)^{-1} [P, Q] (P-z)^{-1}.
$$
We see that the number of resolvent factors increases by $1$, and the number of commutators increases by $1$. This means that LHS of \eqref{jk1} decreases by $1$. Moreover, the total differential order decreases by $1$. That is, all the commutator terms are in $\mathcal{O}^{k-1}$. 

By repeating this argument, we see that the commutator terms are bounded in operator norm by $O(h^{-(k-1)/3})$ up to a finite sum of double commutator terms lying in $\mathcal{O}^{k-2}$.  Thus it suffices to show that an element of $\mathcal{O}^{k-2}$ has an operator norm bound $C h^{-k/3}$. Consider the integral that yields an element of $\mathcal{O}^{k-2}$. It takes the form 
\begin{equation}
h^{j_3} \int_{\CC} \partial_z^{j_1} \dbar F(z) Q_0 (P-z)^{-1} Q_1 (P-z)^{-1} Q_3 \dots (P - z)^{-1} Q_{j_2} \, dL(z).
\label{aae-int-to-bound}\end{equation}
Here the $Q_i$ are semiclassical differential operators independent of $z$, and there are $j_2$ factors of $(P-z)^{-1}$ in all. Let $d_i$ be the differential order of $Q_i$ Since the total differential order is nonpositive, we have 
\begin{equation}
\sum_i d_i \leq 2j_2. 
\label{-2}\end{equation}
We insert powers of $(2- P)$ into this product (recall that $2-P$ is an invertible operator) as follows:
\begin{equation}\begin{gathered}
Q_0 (P-z)^{-1} Q_1 (P-z)^{-1} Q_3 \dots (P_z)^{-1} Q_{l} = \\  \Big( Q_0 (2- P)^{-a_0} \Big)  \Big( (2- P) (P-z)^{-1} \Big) \Big( (2- P)^{a_0 - 1}  Q_1 (2-P)^{-a_1} \Big) \Big( (2-P) (P-z)^{-1} \Big)  \\ \circ \,  \Big( (2- P)^{a_1-1}  Q_3 (2-P)^{-a_2} \Big)\dots \Big(  (2-P) (P-z)^{-1}\Big) \Big( (2- P)^{a_{j_2}-1} Q_{j_2} \Big)
\end{gathered}\label{expand}\end{equation}
where the $a_i$ are chosen so that each $z$-independent group $Q_0 (2- P)^{-a_0} $, $(2- P)^{a_0 - 1}  Q_1 (2-P)^{-a_1}$, etc, has nonpositive differential order; this can clearly be done by choosing the $a_0, a_1, a_2, \dots$ in turn so that each group
has differential order $0$, except for the last, which necessarily has nonpositive differential order. Thus, using the same reasoning as above (uniform boundedness on semiclassical Sobolev spaces), each $z$-independent group has an $O(1)$ operator norm  bound on $L^2(Y_r)$, uniform in $r$ and $h$. On the other hand, using spectral theory, the operator norm of $(P-z)^{-1} (2- P)$ is, for $z \in \supp F \subset [-2, 2] + i[-1, 1]$, bounded by
$$
\sup_{t \in [-\infty, 1)} (t-z)^{-1} (2- t) \leq C |\Im z|^{-1}. 
$$
Now we can bound the operator norm of \eqref{aae-int-to-bound} by substituting \eqref{expand}, using the operator norms bounds just deduced for each grouping,
together with the bound on $\dbar F$, 
$$
| \partial_z^{j_1} \dbar F(z) | \leq C |\Im z|^{j_2} h^{-2(j_1 + j_2 + 1)/3},
$$
together with the $O(h^{2/3})$ estimate on the area of the support of $F$. We get an operator bound of
$$
C h^{j_3} h^{-2(j_1 + j_2 + 1)/3}  h^{2/3} = C h^{-2(j_1+j_2-3j_3)/3} \leq C h^{-k/3},
$$
since \eqref{aae-int-to-bound} is in $\mathcal{O}^{k-2}$ and hence $2(j_1 + j_2-1) - 3j_3 \leq k-2$ according to \eqref{jk1}. 
This completes the proof.
\end{proof}

We also will need the following result.

\begin{lem}\label{lem:fPP} The operator $\dot {f(P)} P$ is uniformly bounded as $h \to 0$. 
\end{lem}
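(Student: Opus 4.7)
The plan is to reduce to bounding $\dot{g(P)}$ where $g(t) := tf(t) = t\chi(t/h^{2/3})$, via the Leibniz identity
\[
\dot{f(P)} P \;=\; \frac{d}{dr}\bigl[f(P)P\bigr] - f(P)\dot P \;=\; \dot{g(P)} - f(P)\dot P,
\]
and then to show the two pieces are bounded on $L^2$ separately.

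For $f(P)\dot P$, I would factor as $[f(P)(2-P)]\circ[(2-P)^{-1}\dot P]$. The spectral multiplier $f(P)(2-P)$ has operator norm at most $2$: since $f$ vanishes on $(-\infty,h^{2/3}]$ and $\spec(P)\subset(-\infty,1]$, the function $t\mapsto f(t)(2-t)$ is supported in $[h^{2/3},1]$ where it is bounded by $2$. The second factor is uniformly bounded on $L^2$ by the semiclassical elliptic estimate \eqref{sclSob} applied to $(2-P)^{-1} : L^2 \to H^2_h(\dOmega)$, together with the fact that $\dot P$, being $h^2$ times a fixed second-order differential operator on $\dOmega$, maps $H^2_h \to L^2$ uniformly in $h$ (passing between left and right composition via self-adjointness).

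For $\dot{g(P)}$, the plan is to rerun Lemma~\ref{lem:F}'s Helffer--Sj\"ostrand argument with $g$ in place of $f$, exploiting that $g$ is ``one order smoother'' than $f$. Indeed $g^{(k)}(t) = kf^{(k-1)}(t) + tf^{(k)}(t)$ for $k\geq 1$, and since $f^{(k)}$ is supported in $[h^{2/3},2h^{2/3}]$ where $|t|=O(h^{2/3})$, one obtains $\|g^{(k)}\|_\infty = O(h^{-2(k-1)/3})$ -- a factor $h^{2/3}$ better than $\|f^{(k)}\|_\infty = O(h^{-2k/3})$. Constructing an almost analytic extension $G(z) := h^{2/3}\tilde\Psi(zh^{-2/3})$, where $\tilde\Psi$ is an almost analytic extension of a compactly supported smoothing of $s\mapsto s\chi(s)$, the chain rule gives
\[
|\partial_z^k \dbar G(z)| \;\leq\; C_{k,N}\, h^{-2(k+N)/3}|\Im z|^N,
\]
which is uniformly $h^{2/3}$ smaller than the corresponding bound \eqref{F-est} on $\partial_z^k \dbar F$. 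Substituting $G$ into the Helffer--Sj\"ostrand representation
$\dot{g(P)} = -(2\pi)^{-1}\int \dbar G\,(P-z)^{-1}\dot P\,(P-z)^{-1}\,dL(z)$
and repeating Lemma~\ref{lem:F}'s integration-by-parts and commutator bookkeeping, every operator-norm estimate improves by $h^{2/3}$, producing $\|\dot{g(P)}\|_{L^2\to L^2} = O(1)$.

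The main technical point is verifying that this $h^{2/3}$ gain on $\dbar G$ propagates cleanly through all terms in Lemma~\ref{lem:F}'s argument -- both the leading term and every commutator term lying in $\mathcal{O}^{k-2}$. This is straightforward but tedious: one inspects that the proof depends on $F$ only through the pointwise estimates \eqref{F-est} and the $O(h^{2/3})$ area of $\mathrm{supp}(F)$, both of which retain their shape (with the $h^{2/3}$ improvement in the former) under the substitution $F \rightsquigarrow G$.
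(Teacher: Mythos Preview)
Your Leibniz decomposition $\dot{f(P)}P=\dot{g(P)}-f(P)\dot P$ with $g(t)=tf(t)$ is correct, and your treatment of $f(P)\dot P$ is fine. The route is genuinely different from the paper's: rather than packaging the gain into a new function $g$ and re-running Lemma~\ref{lem:F}, the paper expands $\dot{f(P)}=\dot P f'(P)+[\dot P,P]f''(P)+\mathcal{O}^0$ directly, composes each term with $P$, and observes that $f^{(k)}(P)P$ gains $h^{2/3}$ because $\sup_t|t f^{(k)}(t)|\sim h^{2/3}\|f^{(k)}\|_\infty$ (the support of $f^{(k)}$ lives in $[h^{2/3},2h^{2/3}]$). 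Your approach is more modular in that it re-uses the machinery of Lemma~\ref{lem:F} as a black box; the paper's is more hands-on but avoids building a new almost analytic extension.

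There is, however, a real gap in your construction of $G$. You write $G(z)=h^{2/3}\tilde\Psi(zh^{-2/3})$ with $\tilde\Psi$ an almost analytic extension of a ``compactly supported smoothing of $s\mapsto s\chi(s)$''. If $\tilde\Psi$ is $h$-independent with support in a fixed compact set, then $G(t)=g(t)$ only for $t=O(h^{2/3})$, not on all of $\spec P(r)\subset(-\infty,1+o(1)]$; the Helffer--Sj\"ostrand formula would then compute the wrong operator. What you need (exactly as the paper does for $\chi$) is an $h$-dependent truncation of $s\chi(s)$ at $s\sim h^{-2/3}$, done so that all $s$-derivatives remain uniformly bounded. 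This is not automatic: unlike $\chi(s)$, the function $s\chi(s)=s$ is not constant for large $s$, so the paper's mirror trick fails and you must instead multiply by a cutoff of the form $\rho(sh^{2/3})$ (i.e.\ transition width $\sim h^{-2/3}$ in $s$). With that choice the derivatives of the truncated function are indeed uniformly $O(1)$, your claimed bound $|\partial_z^k\dbar G|\leq C_{k,N}h^{-2(k+N)/3}|\Im z|^N$ holds, $\supp G$ has area $O(h^{2/3})$, and the rest of your argument goes through.
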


\begin{proof}
This result  seems surprising at first, since, according to Lemma~\ref{lem:F}, we only have a bound of $\sim h^{-2/3}$ on the norm of $\dot{f(P)}$. The key observation is that $\dot{f(P)}$ is (at least heuristically) spectrally supported on the support of $f'$, which is on the interval $[0, 2h^{2/3}]$, and that means that composing with $P$ gains us an additional factor of $h^{2/3}$. 

To see this rigorously, we note that by commuting operators in the first line of \eqref{ddotfP}, we
obtain an expansion for $\dot{f(P)}$ of the form
$$
\dot{f(P)} = \dot P f'(P) + [\dot P, P] f''(P) + \text{ a term in } \mathcal{O}^0.
$$
Then composing with $P$ on the right, and inserting a smooth compactly supported function $g(P)$ of $P$ where $g$  is independent of $h$ and $g = 1$ on the support of $f$, we can express 
\begin{equation}
\dot{f(P)}P = \dot P  g(P) f'(P) P + [\dot P, P] g(P) f''(P) P + \text{ a term in } \mathcal{O}^0.
\label{fPP}\end{equation}
Notice that $f'(P) P$ is a uniformly bounded family of operators, as the function $f'(t) t$ is uniformly bounded in $h$. Similarly, $f''(P) P$ is bounded in operator norm by $C h^{-2/3}$. Notice that in both of these statements, we gain $h^{2/3}$ over a naive application of Lemma~\ref{lem:F}. 

On the other hand, $\dot P g(P)$  is a uniformly bounded operator, as
we can write it as the composition of $\dot P (2-P)^{-1}$ and $(2-P) g(P)$. This first is bounded using the arguments above (as it has differential order $0$) and the second is bounded by $\sup_{t \in \RR} |(2-t) g(t)| < \infty$. Similarly, $[\dot P, P] g(P)$ is $O(h)$ as a bounded operator on $L^2(\dOmega)$. Finally, the third term on the RHS of \eqref{fPP} is $O(1)$ in operator norm by Lemma~\ref{lem:F}. Putting these statements together we see that $\dot{f(P)}$ is $O(1)$ in operator norm. 
\end{proof}

Now we prove the analogue of Lemma~\ref{lem:Energy} for the modified energy $\tE$. 

\begin{prop} \label{prop:modE}
Suppose that $u \in C^2(\overline{\Omega})$ and that $u$ satisfies the Neumann boundary condition $d_n u = 0$ at $\dOmega$. Then there is a constant $C$ depending only on $\Omega$ such that for all $h \leq 1$, the modified energy $\tE(0)$, namely $\ang{f(P(0)) P(0)v, v}$  at $\dOmega$, for $v = Uu$, is bounded by a constant times 
\begin{equation}
\Big( \| u \|_{L^2(\Omega)}^2 + h^{-2} \| (h^2 \Delta - 1) u \|_{L^2(\Omega)}^2 + \| \nabla (h^2 \Delta - 1) u \|_{L^2(\Omega)}^2 \Big) .
\label{modE-bound}\end{equation}
\end{prop}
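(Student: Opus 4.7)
My plan is to mirror the proof of Lemma~\ref{lem:Energy} with the modified energy $\tE(r)$ in place of $E(r)$. I fix a cutoff $\eta\in C^\infty([0,\delta])$ with $\eta(0)=1$, $\eta(\delta)=0$, and exploit the identity
\[
\tE(0) = -\int_0^\delta \partial_r\bigl(\eta(r)\tE(r)\bigr)\,dr,
\]
so that $|\tE(0)|\le C\int_0^\delta(|\tE(r)|+|\tE'(r)|)\,dr$. To reconcile $\tE(0)$ with the quantity $\ang{f(P(0))P(0)v,v}$ actually named in the statement, I use the Neumann condition together with $v=\gamma u$ and $\gamma(0,y)=1$ to write $v_r(0)=\gamma_r(0,y)\,u|_\pO$; the resulting boundary contribution $h^2\ang{f(P(0))v_r(0),v_r(0)}$ is then bounded by $Ch^2\|u|_\pO\|_{L^2(\pO)}^2$, which by Proposition~\ref{1/3} is $O(h^{4/3})$ times the RHS of \eqref{modE-bound}, hence harmless.

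Bounding $\int_0^\delta|\tE(r)|\,dr$ is routine: since $\|f(P)\|_{\mathrm{op}}\le 1$ and $\|f(P)P\|_{\mathrm{op}}\le C$ (as $tf(t)$ is bounded on $(-\infty,1]$), one has $|\tE(r)|\le C(\|v\|_{L^2(Y_r)}^2+h^2\|v_r\|_{L^2(Y_r)}^2)$; integrating in $r$ and applying \eqref{nablau} yields a bound by the RHS of \eqref{modE-bound}. For $\int|\tE'(r)|\,dr$, I differentiate and substitute $h^2 v_{rr}=-Pv-h^2 b'v-Uw$ from \eqref{conj}. The crucial cancellation---analogous to the one in Lemma~\ref{lem:Energy}---of $2h^2\ang{f(P)v_{rr},v_r}$ against $2\ang{f(P)Pv,v_r}$ gives
\[
\tE'(r)=h^2\ang{\dot{f(P)}v_r,v_r}+\ang{\dot{f(P)}Pv,v}+\ang{f(P)\dot Pv,v}-2h^2\ang{f(P)b'v,v_r}-2\ang{f(P)Uw,v_r}.
\]
Among these five, $\ang{\dot{f(P)}Pv,v}$ is bounded by $C\|v\|^2$ via Lemma~\ref{lem:fPP}; $\ang{f(P)\dot Pv,v}$ is handled by writing $\dot P=h^2Q$ for a second-order tangential operator $Q$, integrating by parts in the $y$ variables, and invoking $H^1_h$-boundedness of $f(P)$ (which commutes with the symmetric elliptic piece of $P$), giving $\le C\|v\|_{H^1_h(Y_r)}^2$; the final two terms yield to weighted Cauchy--Schwarz, with the $\|v_r\|^2$ contributions absorbed using \eqref{nablau}.

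The main obstacle is the first term, $h^2\ang{\dot{f(P)}v_r,v_r}$. A bare operator-norm bound via Lemma~\ref{lem:F} gives only $\|\dot{f(P)}\|_{\mathrm{op}}\le Ch^{-2/3}$, which combined with $h^2\int\|v_r\|^2\le C(\|u\|^2+h^{-2}\|w\|^2)$ leaves a stray factor of $h^{-2/3}$, falling short of the uniform estimate required. The resolution exploits the fine structure of $\dot{f(P)}$ already identified in Lemma~\ref{lem:fPP}: $\dot{f(P)}=\dot Pf'(P)+[\dot P,P]f''(P)+R$ with $R\in\mathcal O^0$. The remainder $R$ contributes only $Ch^2\|v_r\|^2$ and integrates acceptably. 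For the leading pieces, one uses that $\dot P=h^2Q$ carries an extra factor of $h^2$, transferring a semiclassical derivative off $\dot P$ onto $v_r$ via tangential integration by parts on $\dOmega$, and then quantifies the residual factor using $H^1_h$-regularity of $v_r$. The required tangential control $\int_0^\delta\|h\nabla_y v_r\|_{L^2(Y_r)}^2\,dr\le C(\|u\|^2+h^{-2}\|w\|^2+\|\nabla w\|^2)$ is obtained by differentiating $(h^2\Delta-1)u=w$ in tangential directions and applying the semiclassical elliptic estimate \eqref{sclSob}---which is precisely where the $\|\nabla w\|^2$ term on the RHS of \eqref{modE-bound} enters the argument. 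Tracking the spectral localization of $f'(P)$ and $f''(P)$ (both essentially supported where $P\lesssim h^{2/3}$) to recover the final $h^{2/3}$-factor needed for uniformity is the most delicate bookkeeping step.
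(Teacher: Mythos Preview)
Your overall framework---writing $\tE(0)$ as the integral of $\partial_r(\eta\tE)$ and controlling $\int(|\tE|+|\tE'|)$---matches the paper, and your handling of the $\ang{\dot{f(P)}Pv,v}$, $\ang{f(P)\dot Pv,v}$, and $w'$ terms is fine. But your treatment of the critical term $h^2\ang{\dot{f(P)}v_r,v_r}$ has a genuine gap.

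Your plan is to expand $\dot{f(P)}=\dot Pf'(P)+[\dot P,P]f''(P)+R$, move one tangential derivative off $\dot P=h^2Q$ onto $v_r$ by integration by parts, and close using the claimed bound $\int_0^\delta\|h\nabla_y v_r\|^2_{L^2(Y_r)}\,dr\le C(\text{RHS})$. That last estimate is false. From semiclassical $H^2$ regularity for the Neumann problem one gets $h^4\int_{C_\delta}|\nabla_y\partial_r v|^2\le C$, which only yields $\int_0^\delta\|h\nabla_y v_r\|^2\,dr\le Ch^{-2}$; a plane-wave computation $u=e^{ix\cdot\xi/h}$ shows this is sharp. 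Tangentially differentiating the equation does not improve matters: $h\partial_{y_j}v$ satisfies a Helmholtz-type equation with $O(h)$ right-hand side, and the \eqref{nablau} argument again returns only $h^4\int|\nabla\partial_{y_j}v|^2\le C$. With the correct $h^{-2}$ in hand, your scheme gives at best $\int|h^2\ang{\dot{f(P)}v_r,v_r}|\,dr=O(h^{-2/3})$, which is exactly the naive bound you started from. The vague appeal to ``spectral localization of $f'(P)$'' cannot recover the missing $h^{2/3}$: localization to $\{P\lesssim h^{2/3}\}$ helps only when paired with an extra factor of $P$ (as in Lemma~\ref{lem:fPP}), which is absent here.

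The paper avoids this obstruction altogether by an integration by parts in $r$ rather than in $y$: it writes
\[
\ang{\dot{f(P)}v_r,v_r}=\tfrac{d}{dr}\ang{\dot{f(P)}v_r,v}-\ang{\ddot{f(P)}v_r,v}-\ang{\dot{f(P)}v_{rr},v},
\]
trading one factor of $v_r$ for $v$. The total-derivative term is integrated against $\eta$, and its boundary contribution at $r=0$ vanishes by Neumann. The $v_{rr}$ term is replaced using $h^2v_{rr}=-Pv-w'$, producing an extra $\ang{\dot{f(P)}Pv,v}$ (handled by Lemma~\ref{lem:fPP}) plus a harmless $w'$ term. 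The remaining $\ang{\ddot{f(P)}v_r,v}$ is iterated once more to $\tfrac12\tfrac{d}{dr}\ang{\ddot{f(P)}v,v}-\tfrac12\ang{\dddot{f(P)}v,v}$; the boundary term $h^2\ang{\ddot{f(P)}v,v}|_{r=0}$ is then controlled by $h^2\cdot h^{-4/3}\cdot\|u\|^2_{L^2(\pO)}=O(1)$ via Lemma~\ref{lem:F} and Proposition~\ref{1/3}. This is where the $\|\nabla w\|$ dependence actually enters---through the $h^{-1/3}$ boundary bound---not through any tangential regularity of $v_r$.
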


\begin{remark}
Unlike in Lemma~\ref{lem:Energy}, we only obtain this estimate at $r=0$. We could obtain a uniform estimate for $\tE(r)$ if we had a uniform estimate on $\| h v_r \|_{L^2(Y_r)}$ for $r$ small.
\end{remark}

\begin{proof}
As before, we scale $u$ so that the RHS of \eqref{modE-bound} equals one. We then need to show that $\tE(0)$ is bounded by a constant depending only on $\Omega$. In the collar neighbourhood $\Cd$ we let $v = Uu$ as in Section~\ref{sec:1/3-bound}. Thus, we have $-(h^2\partial_r^2 + P(r) ) v = w' := Uw + h^2 b' Uu$, where $w = (h^2 \Delta - 1) u$. 

We compute the $r$-derivative of $\tE$, which we indicate by a dot: 
\begin{equation}\begin{gathered}
\dot \tE(r) = 2\ang{f(P) h^2 v_{rr}, v_r} + 2 \ang{f(P) Pv, v_r} \\
+ h^2 \ang{\dot {f(P)} v_r, v_r} +  \ang{\dot {f(P)}P v, v} + 
\ang{f(P) \dot P v, v} .
\end{gathered}\label{dottE}\end{equation}
Recalling that $w' := h^2 v_{rr} + Pv$, the  first two terms combine to give $\ang{f(P) w', v_r}$. We next calculate
\begin{equation}\begin{gathered}
\ang{\dot {f(P)} v_r, v_r} = \frac{d}{dr} \ang{\dot {f(P)} v_r, v} \\
- \ang{\ddot{f(P)} v_r, v} - \ang{\dot {f(P)} v_{rr}, v}.
\end{gathered}\label{tE2}\end{equation}
We substitute this into \eqref{dottE} and use $h^2 v_{rr} + Pv= w'$ again to obtain
\begin{equation}\begin{gathered}
\dot \tE(r) =  \ang{f(P) w', v_r} + h^2 \Big(  \frac{d}{dr} \ang{\dot {f(P)} v_r, v} - \ang{\ddot{f(P)} v_r, v} \Big) \\
-\ang{\dot f(P) w', v} + 2 \ang{\dot{f(P)} Pv, v} +      
\ang{f(P) \dot P v, v} .
\end{gathered}\label{dottE2}\end{equation}
We further calculate
\begin{equation}\begin{gathered}
\ang{\ddot{f(P)} v_r, v} = \frac1{2} \frac{d}{dr} \ang{\ddot{f(P)} v, v} \\
- \frac1{2} \ang{\dddot{f(P)} v, v}.
\end{gathered}\label{dottE2.5}\end{equation}
Substituting this into \eqref{dottE2} we find that 
\begin{equation}\begin{gathered}
\dot \tE(r) =  \ang{f(P) w', v_r} + h^2 \frac{d}{dr} \Big(  \ang{\dot {f(P)} v_r, v} + \frac1{2} \ang{\ddot {f(P)} v, v} \Big) \\
-    \frac1{2}h^2\ang{\dddot{f(P)} v, v} -\ang{\dot f(P) w', v}   + 2\ang{\dot {f(P)}P v, v} +
\ang{f(P) \dot P v, v} .
\end{gathered}\label{dottE3}\end{equation}

We now use Lemma~\ref{lem:F} to show that $E(0)$ is uniformly bounded for  $h \leq 1$. To see this we write as before, with $\eta$ as in \eqref{E2},  
\begin{equation}\begin{gathered}
\tE(0) = \int_0^\delta  \frac{d}{dr} \Big( \eta(r) \tE(r) \big)  \, dr.
\end{gathered}\label{E22}\end{equation}
Substituting our expression in for $\dot \tE$, we obtain 
\begin{equation}\begin{gathered}
\tE(0) = \int_0^\delta \eta'(r) \Big( \ang{f(P)hv_r, hv_r} + \ang{f(P) P v, v} \Big)  \, dr  \\
+ \int_0^\delta \eta(r)  h^2 \frac{d}{dr} \Big(  \ang{\dot {f(P)} v_r, v} + \ang{\ddot {f(P)} v, v} \Big)  \, dr \\
 - \int_0^\delta \eta(r) \Big(  \ang{f(P) w', v_r} -\ang{\dot f(P) w', v}  \\ +    \frac{h^2}{2} \ang{\dddot{f(P)} v, v} +  2\ang{\dot {f(P)}P v, v} +
\ang{f(P) \dot P v, v} \Big) \, dr. 
\end{gathered}\label{E23}\end{equation}
Integrating by parts in the second term, and using $\eta(0) = 1$, $\eta(\delta) = 0$, we obtain 
\begin{equation}\begin{gathered}
\tE(0) = \int_0^\delta \eta'(r) \Big( \ang{f(P)hv_r, hv_r} + \ang{f(P) P v, v} \Big)  \, dr  \\
- h^2\int_0^\delta \eta'(r)  \Big( \ang{\dot {f(P)} v_r, v} + \ang{\ddot {f(P)} v, v} \Big)  \, dr   
+ \ang{\ddot f(P)v, v} \Big|_{r=0} \\ 
%+ h^2\| \ddot {f(P)} \|_{L^2(\partial M) \to L^2(\partial M)} \| (v |_{\partial M}) \|_{L^2(\partial M)}^2  \\
 - \int_0^\delta \eta(r) \Big(    \ang{f(P) w', v_r} -\ang{\dot f(P) w', v}   \\ +  \frac{h^2}{2} \ang{\dddot{f(P)} v, v} + 2\ang{\dot {f(P)}P v, v} +
\ang{f(P) \dot P v, v} \Big) \, dr. 
\end{gathered}\label{E24}\end{equation}

Now consider each of the ten terms on the RHS of \eqref{E24} in turn. For the terms that involve no derivatives of $f$ --- that is, the first, second, sixth, and tenth terms, the argument is exactly the same as in the proof of Lemma~\ref{lem:Energy}, using the fact that 
$$
\| f(P(r)) \|_{L^2(\Omega) \to L^2(\Omega)}  = 1 \text{ for all } r \leq \delta.
$$
Te remaining terms are bounded as follows: 
\begin{itemize}
%\item The first term is bounded by 
%$$
%\sup_r \| f(P) \|_{L^2(\dOmega) \to L^2(\dOmega)} 
%h^2 \| \nabla v \|_{\Cd}^2 = O(1)
%$$
%\item
%The second term is bounded by 
%$$
%\sup_r \| f(P) P \|_{L^2(\dOmega) \to L^2(\dOmega)} 
% \| v \|_{\Cd}^2 = O(1).
%$$
\item
The third term, $-h^2 \eta'(r) \ang{\dot{f(P)} v_r, v}$, is bounded by 
$$
h \sup_r \| \dot{f(P(r))} \|_{L^2(\dOmega) \to L^2(\dOmega)} 
 \| v \|_{\Cd} \| h\nabla v \|_{\Cd}  = O(h \times h^{-2/3} \times 1) = O(h^{1/3}),
$$
where we used Lemma~\ref{lem:F} to bound the operator norm $\dot{f(P)}$. 
\item The fourth term is bounded by 
$$
h^2 \sup_r \| \ddot{f(P(r))} \|_{L^2(\dOmega) \to L^2(\dOmega)} 
 \| v \|_{\Cd}^2 = O(h^2 \times h^{-4/3} ) = O(h^{2/3}).
$$
\item
The fifth term is similarly bounded by 
$$
h^2 \| \ddot{f(P)(0)} \|_{L^2(\dOmega) \to L^2(\dOmega)} 
 \| v \|_{L^2(\dOmega)}^2 = O(h^2 \times h^{-4/3} \times h^{-2/3} ) = O(1),
$$
where we used Proposition~\ref{1/3}. (Notice that the other boundary term, involving $\ang{\dot{f(P)} v_r, v}$, does not appear, due to the Neumann boundary condition at $r=0$.)
%\item
%The sixth term is estimated by 
%$$
% \sup_r \| {f(P)} \|_{L^2(\dOmega) \to L^2(\dOmega)} 
% \| w \|_{\Cd} \| \nabla v \|_{\Cd} = O( h \times h^{-1}) = O(1).
%$$
\item The seventh term is estimated by
$$
 \sup_r \| {\dot f(P(r))} \|_{L^2(\dOmega) \to L^2(\dOmega)} 
 \| w' \|_{\Cd} \| v \|_{\Cd} = O( h^{-2/3} \times h \times 1) = O(h^{1/3}).
$$
\item The eighth term is estimated using Lemma~\ref{lem:F} by 
$$
h^2 \sup_r \| \dddot{f(P(r))} \|_{L^2(\dOmega) \to L^2(\dOmega)} 
 \| v \|_{\Cd}^2 = O(h^2 \times h^{-2} ) = O(1).
$$
\item The ninth term is estimated using Lemma~\ref{lem:fPP}, by 
$$
 \sup_r \| \dot{f(P(r))}P(r) \|_{L^2(\dOmega) \to L^2(\dOmega)} 
 \| v \|_{\Cd}^2 = O(1).
$$
%\item Finally, to estimate the last term, we write $f(P) \dot P = f(P) g(P) \dot P$ as in the proof of Lemma~\ref{lem:fPP}, to show that this operator is uniformly bounded in $h$. 
\end{itemize}

%Bounding each term, we get 
%\begin{equation}\begin{gathered}
%\tE(0) \leq \sup_r \| f(P) \|_{L^2(\partial M) \to L^2(\partial M)} 
%h^2 \| \nabla v \|_{L^2(M)}^2 + \| v \|_{L^2(M)}^2 \\
%+ C h^2 \sup_r \| \dot{f(P)}  \|_{L^2(Y_r) \to L^2(Y_r)}  \| \nabla v \|_{L^2(M)} \| v \|_{L^2(M)} + C  h^2 \sup_r \| \ddot{f(P)}  \|_{L^2(Y_r) \to L^2(Y_r)}  \| v \|_{L^2(M)}^2  \\ + h^2\| \ddot {f(P)} \|_{L^2(\partial M) \to L^2(\partial M)} \| (v |_{\partial M}) \|_{L^2(\partial M)}^2 \\
%+ h^2 \sup_r \| \dddot{f(P)}  \|_{L^2(Y_r) \to L^2(Y_r)}  \| v \|_{L^2(M)}^2 +  \sup_r \| f(P) \dot P  \|_{L^2(Y_r) \to L^2(Y_r)}  \| v \|_{L^2(M)}^2. 
%\end{gathered}\label{E22}\end{equation}
%Applying Lemma~\ref{lem:F} and our bound on the $L^2$ norm of $v$ at $\partial M$, we obtain
%\begin{equation}
%\tE(0) = O(1).
%\end{equation}
\end{proof}

\begin{remark}\label{rem:w-improvement-2} As in Remark~\ref{rem:w-improvement}, we can replace the RHS of \eqref{modE-bound} by \eqref{Tataru-bound-improv}. 
\end{remark}

%%%%%%%%%%%%%%%%%%%%%%%%%%%%%%%%%%%%%%%%%%%%%%%%%%%%%%
%%%%%%%%%%%%%%%%%%%%%%%%%%%%%%%%%%%%%%%%%%%%%%%%%%%%%%

\section{Upper bound}\label{sec:upperbound}
In this section, we prove an upper bound on $G_h(1 - h^2 \Deltab) u$ for $u$ an approximate eigenfunction. In fact, following the formulation of Lemma~\ref{lem:Energy} and Propositions~\ref{1/3} and \ref{prop:modE}, we express the result more generally. Using this, we obtain Theorem~\ref{thm:qo} via a $TT^*$ argument. 

\begin{prop}\label{prop:upperbound} Suppose that $u \in C^2(\overline{\Omega})$ and that $u$ satisfies the Neumann boundary condition $d_n u = 0$ at $\dOmega$. Then there is a constant $C$ depending only on $\Omega$ such that for all $h \leq 1$,  $G_h(1 - h^2 \Deltab) u$ is bounded by a constant times 
\begin{equation}
 \| u \|_{L^2(\Omega)} + h^{-1} \| (h^2 \Delta - 1) u \|_{L^2(\Omega)} + \| \nabla (h^2 \Delta - 1) u \|_{L^2(\Omega)} ,
\label{sqrt-modE-bound}\end{equation}
or indeed, using Remarks~\ref{rem:w-improvement} and \ref{rem:w-improvement-2}, by a constant times 
\begin{equation}\begin{gathered}
 \| u \|_{L^2(\Omega)} + h^{-1/2} \| u \|_{L^2(\Omega)}^{1/4} \| (h^2 \Delta - 1) u \|_{L^2(\Omega)}^{3/4} +  \\
 \Big( \| u \|_{L^2(\Omega)}^{1/2} + \| u \|_{L^2(\Omega)}^{3/8} \| (h^2 \Delta - 1) u \|_{L^2(\Omega)}^{1/8} \Big)  \| (h^2 \Delta - 1) u \|_{L^2(\Omega)}^{1/4}  \| \nabla (h^2 \Delta - 1) u \|_{L^2(\Omega)}^{1/4}
\end{gathered} .%\ , \ w = (h^2 \Delta - 1) u .
\label{Tataru-bound-improv-2}\end{equation}
\end{prop}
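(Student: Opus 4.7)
The strategy is to split the regularized square root into the two pieces from its definition \eqref{Gh} and bound each separately, using Proposition~\ref{1/3} for the regularizing piece and Proposition~\ref{prop:modE} for the classically allowed piece. Write
$$
G_h(\sigma) \;=\; G_h^{(1)}(\sigma) + G_h^{(2)}(\sigma), \qquad G_h^{(1)}(\sigma) := \sqrt{\sigma}\,\chi_1\!\big(\tfrac{\sigma}{h^{2/3}}\big), \quad G_h^{(2)}(\sigma) := h^{1/3}\chi_2\!\big(\tfrac{\sigma}{h^{2/3}}\big).
$$
By \eqref{P0} we have $P(0) = 1 - h^2\Deltab$, and since the conjugating factor $\gamma$ equals $1$ on $\dOmega$, the boundary trace of $v = Uu$ agrees with $\ud$. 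So, by the triangle inequality, it suffices to bound $\|G_h^{(j)}(P(0))\ud\|_{L^2(\dOmega)}$, for $j = 1, 2$, by the right-hand side of \eqref{sqrt-modE-bound}.

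\emph{The regularizing piece $j=2$.} Since $\chi_2$ takes values in $[0,1]$, the spectral calculus yields
$$
\|G_h^{(2)}(P(0))\ud\|_{L^2(\dOmega)} \;\leq\; h^{1/3}\|\ud\|_{L^2(\dOmega)}.
$$
Proposition~\ref{1/3} (applied with $r=0$) gives $\|\ud\|_{L^2(\dOmega)}^2 \leq C h^{-2/3}$ times the square of the right-hand side of \eqref{sqrt-modE-bound}, so taking square roots and multiplying by $h^{1/3}$ absorbs the $h^{-1/3}$ exactly.

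\emph{The classically allowed piece $j=1$.} Note that $G_h^{(1)}(\sigma)^2 = \sigma\,\chi_1^2(\sigma/h^{2/3})$ vanishes identically on $\sigma < 0$, since $\chi_1$ vanishes there. Choose the cutoff $\chi$ appearing in the definition \eqref{fdefn} of $f$ to be $\chi_1^2$ (which satisfies the stated hypotheses on $\chi$). Then $G_h^{(1)}(\sigma)^2 = \sigma f(\sigma)$ pointwise, and by the functional calculus
$$
\|G_h^{(1)}(P(0))\ud\|_{L^2(\dOmega)}^2 \;=\; \langle\, f(P(0))\,P(0)\,\ud,\,\ud\,\rangle_{L^2(\dOmega)}.
$$
Because $f(P(0)) \geq 0$ as an operator, the kinetic contribution $h^2\langle f(P(0)) v_r(0), v_r(0)\rangle$ in $\tE(0)$ is nonnegative, so this inner product is bounded above by $\tE(0)$. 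Proposition~\ref{prop:modE} then bounds $\tE(0)$ by the square of \eqref{sqrt-modE-bound}, and taking square roots completes the estimate.

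The sharpened form \eqref{Tataru-bound-improv-2} follows by the same two-piece decomposition, substituting the improved expression \eqref{Tataru-bound-improv} (from Remarks~\ref{rem:w-improvement} and \ref{rem:w-improvement-2}) in place of the cruder bounds, and then applying $\sqrt{A+B+C} \leq \sqrt{A}+\sqrt{B}+\sqrt{C}$ together with $\sqrt{(A+B)X} \leq (\sqrt{A}+\sqrt{B})\sqrt{X}$ to expand the square root into the form displayed in \eqref{Tataru-bound-improv-2}. No real obstacle arises: the substantive analytic work has already been done in Sections~\ref{sec:1/3-bound} and \ref{sec:modE}, and the only mildly subtle point is the matching of cutoff functions, which is handled by the choice $\chi := \chi_1^2$ so that $G_h^{(1)}(\sigma)^2$ and $\sigma f(\sigma)$ are literally equal rather than merely comparable.
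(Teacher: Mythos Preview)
Your proof is correct and matches the paper's own argument essentially line for line: split $G_h$ into the $\chi_1$ and $\chi_2$ pieces, take $\chi=\chi_1^2$ in the definition of $f$ so that $G_h^{(1)}(P(0))^2 = f(P(0))P(0)$, bound the $\chi_1$-piece via Proposition~\ref{prop:modE} and the $\chi_2$-piece via Proposition~\ref{1/3}. Your observation that the kinetic term in $\tE(0)$ is nonnegative (rather than asserting it vanishes) is if anything slightly more careful than the paper, since $v_r(0)=\gamma_r(0)\ud$ need not be identically zero; either way the $\langle f(P(0))P(0)\ud,\ud\rangle$ term is controlled by $\tE(0)$.
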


\begin{proof} This is an almost  immediate consequence of Propositions~\ref{prop:modE} and Propositions~\ref{1/3}. Referring to the definition \eqref{Gh} of $G_h$, if we set $\chi = \chi_1^2$, and define $f(t) = \chi(t h^{-2/3})$, then Proposition~\ref{prop:modE} shows that 
$$
\| \chi_1(P(0) h^{-2/3} ) P(0)^{1/2} \ud \|_{L^2(\dOmega)} \leq \eqref{sqrt-modE-bound}.
$$
On the other hand, using Proposition~\ref{1/3}, and the uniform boundedness of $\chi_2(P h^{-2/3})$, we find that 
$$
h^{1/3} \| \chi_2(P(0) h^{-2/3} )  \ud \|_{L^2(\dOmega)} \leq \eqref{sqrt-modE-bound}.
$$
Since $P(0) = 1 - h^2 \Delta$, after adjusting for the $h^2 b'$ as explained in Section~\ref{sec:modE}, this completes the proof.
\end{proof}

\begin{proof}[Proof of Theorem~\ref{thm:qo}]
Let $\Pi_h$ denote the orthogonal projection onto the span of those eigenfunctions with eigenvalue $E_j$ satisfying $|\sqrt{E_j} - h^{-1}| \leq 1$. We define 
a map $T_1 : L^2(\Omega) \to L^2(\dOmega)$ by 
$$
T_1(u) = G_{h}(1-h^{2}\Delta_{\dOmega}) \big( (\Pi_h u)^{\dOmega} \big),
$$
or writing in terms of the basis of eigenfunctions, 
\begin{equation*}
T_1( \sum_j c_j u_j)=\sum_{|\sqrt{E_j} - h^{-1}| \leq 1}c_{j}G_{h}(1-h^{2}\Delta_{\dOmega})\ud_{j},
\end{equation*}
that is, 
\begin{equation}
T_1 = \sum_{|\sqrt{E_j} - h^{-1}| \leq 1} G_{h}(1-h^{2}\Delta_{\dOmega}) \ud_j \ \big\langle u_j, \cdot \big\rangle  .
\label{bndQM}\end{equation}
Then, due to the orthogonality of the $u_j$, we have 
$$T_1T_1^{\star}=\sum_{|\sqrt{E_j} - h^{-1}| \leq 1} G_{h}(1-h^2 \Deltab) \ud_j \ \Big\langle G_{h}(1-h^2 \Deltab) \ud_j, \ \cdot \ \Big\rangle$$
so it is enough to show that
\begin{equation}
\norm{T_1}_{L^2(\Omega)\to{}L^{2}(\dOmega)}\leq{}C.\label{halfest}
\end{equation}
In view of Proposition~\ref{prop:upperbound}, it suffices to show that if $u$ is in the span of eigenfunctions satisfying $|\sqrt{E_j} - h^{-1}| \leq 1$, into $L^2(\dOmega)$, and $\| u \|_{L^2(\Omega)} = 1$, then we have 
\begin{equation}
 \| u \|_{L^2(\Omega)} + h^{-1} \| (h^2 \Delta - 1) u \|_{L^2(\Omega)} + \| \nabla (h^2 \Delta - 1) u \|_{L^2(\Omega)}  \leq C.
\label{tocheck}\end{equation}
Let  $w = (h^2 \Delta - 1)u$. Then
$$w=\sum_{|\sqrt{E_j} - h^{-1}| \leq 1}c_{j}(h^{2}E_{j}-1)u$$
$$=\sum_{|\sqrt{E_j} - h^{-1}| \leq 1}c_{j}h^{2}(E_{j}^{1/2}+h^{-1})(E^{1/2}_{j}-h^{-1})u_{j}.$$
So since $|\sqrt{E_{j}}-h^{-1}|\leq{}1,$
$$\norm{(h^{2}\Delta-1)u}_{L^{2}(\Omega)}\leq{}Ch.$$
To check the gradient assumption, we write 
$$\nabla{}w=\sum_{|\sqrt{E_{j}}-h^{-1}|\leq{}1}h^{2}(E_{j}^{1/2}+h^{-1})(E^{1/2}_{j}-h^{-1})\nabla{}u_{j}$$
so
$$\norm{\nabla{}w}_{L^{2}(\Omega)}^{2}=h^{4} \!\!\!\!   \!\!\!\!  \!\!\!\! \sum_{\substack{|\sqrt{E_{j}}-h^{-1}|\leq{}1,\\ |\sqrt{E_{k}}-h^{-1}|\leq{}1}}  \!\!\!\!  \!\!\!\! (E_{j}^{1/2}+h^{-1})(E^{1/2}_{j}-h^{-1})(E_{k}^{1/2}+h^{-1})(E^{1/2}_{k}-h^{-1})\int\nabla{}u_{j}\nabla{}u_{k}.$$
However since each of the $u_{j}$ are Neumann eigenfunctions
$$\int_{\Omega}\nabla{}u_{j}\nabla{}u_{k}=-\int_{\Omega}u_{j}\Delta{}u_{k}=E_{k}\delta_{jk}$$
so
$$\norm{\nabla{}w}_{L^{2}}^{2}\leq{}Ch^{4}E^{2}\leq{}C$$
as required. This verifies \eqref{tocheck}, and completes the proof of (ii) of Theorem~\ref{Neu-est}. 
\end{proof}

\section{Exterior mass estimate}\label{sec:eme}
In the proof of Proposition~\ref{prop:modE}, we never used the fact that $f(P)$ was a positive operator. So we can apply exactly the same argument and show that the energy 
$$
\ang{f(-P) hv_r, hv_r} + \ang{f(-P)P v, v}
$$
is uniformly bounded at $r=0$, uniformly for $h < 1$.\footnote{Of course, at $r=0$, the first term vanishes due to the Neumann boundary condition.} (Here the inner product is in $L^2(\dOmega)$.)

In fact we can prove more, due to the fact that the region where $P \leq 0$ is `classically forbidden' (see the discussion in the Introduction). Let us define
$f_M(t)$ by 
$$
f_M(t) = \chi \big( -\frac{t}{M^2 h^{2/3}} \big). 
$$
Thus, $f_M$ is supported in $t \leq 0$, and it is uniformly bounded, while its derivatives $f^{(k)}$ are bounded uniformly by $C h^{-2k/3} M^{-2k}$. We think of $M$ as a large parameter. Thus, as $M \to \infty$, the spectral support of $f_M(-P)$ is further and further away from the classically allowed region. We call an estimate of (approximate) eigenfunctions exterior to the classically allowed region an `exterior mass estimate'. 

We next note that, due to the estimates on derivatives of $f$, we can modify Lemma~\ref{lem:F} to the following:

\begin{lem}\label{lem:FM}
The operator $(d/dr)^k f_M(P)(r)$ is bounded in operator norm by $$C_k h^{-2k/3} M^{-2k}$$ for $k = 0, 1, 2$, $h \leq 1$. 
\end{lem}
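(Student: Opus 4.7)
The plan is to follow the proof of Lemma~\ref{lem:F} essentially verbatim, with the scale parameter $h^{2/3}$ replaced throughout by $M^2 h^{2/3}$. The point is that $f_M$ is built out of the same cutoff $\chi$, but rescaled by $M^2 h^{2/3}$ instead of $h^{2/3}$, and is supported in the classically forbidden region $t \leq -M^2 h^{2/3}$; correspondingly $|f_M^{(k)}(t)| \leq C_k (M^2 h^{2/3})^{-k}$.

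First, I would construct an almost analytic extension $F_M$ of a compactly supported truncation of $f_M$ (truncated on the far-negative side as in the proof of Lemma~\ref{lem:F}, noting that $P$ is bounded above by $1+O(h^2)$ so the truncation does not affect $f_M(P)$). By H\"ormander's construction applied to the base function $\chi(-s)$ and then rescaling by $M^2 h^{2/3}$, we obtain $F_M$ supported in a set of area $O(M^2 h^{2/3})$, satisfying
\begin{equation*}
|\partial_z^k \bar\partial F_M(z)| \leq C_{k,N}\, (M^2 h^{2/3})^{-(k+N+1)} |\Im z|^N, \qquad z \in \CC, \; N \in \NN.
\end{equation*}
Then I would express $(d/dr)^k f_M(P)$ via the Helffer--Sj\"ostrand formula \eqref{HSform} and its $r$-differentiated versions analogous to \eqref{ddotfP}, producing sums of integrals involving resolvents $(P-z)^{-1}$ and multi-commutators of $P$ with its $r$-derivatives.

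Next, I would introduce a class $\mathcal{O}^k_M$ analogous to the class $\mathcal{O}^k$ of Lemma~\ref{lem:F}, verify that $(d/dr)^k f_M(P) \in \mathcal{O}^{2k}_M$ for $k=0,1,2$, and repeat the reordering / integration-by-parts / commutation arguments of Lemma~\ref{lem:F}. The resolvent estimate $\|(P-z)^{-1}(2-P)\|_{L^2(\dOmega) \to L^2(\dOmega)} \leq C |\Im z|^{-1}$ used there is uniform in $M$ (it uses only $P \leq 1+O(h^2)$ and the spectral theorem, and holds for all $z$ in the support of $F_M$), and the semiclassical Sobolev bounds on the $z$-independent groupings likewise do not involve $M$. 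Each factor of $(M^2 h^{2/3})^{-1}$ picked up from a $\partial_z^{j_1}\bar\partial F_M$, balanced against a factor of $|\Im z|^{-1}$ from a resolvent and the $O(M^2 h^{2/3})$ area of $\supp F_M$, produces the bound $O((M^2 h^{2/3})^{-k/2}) = O(M^{-k} h^{-k/3})$ on an element of $\mathcal{O}^k_M$. Applied with $k$ replaced by $2k$, this gives $\|(d/dr)^k f_M(P)\| \leq C_k M^{-2k} h^{-2k/3}$, as required.

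The only place requiring care is ensuring that all constants in the scheme of Lemma~\ref{lem:F} are uniform in $M$ as well as in $h$ and $r$. This is immediate for the semiclassical Sobolev estimates and the resolvent bound, but one should check, when analyzing the terms in \eqref{expand} adapted to $F_M$, that the operator norms of the groupings $(2-P)^{a_{i}-1} Q_i (2-P)^{-a_{i+1}}$ are controlled independently of $M$; this holds because those groupings involve only $P$ and its $r$-derivatives, not $F_M$. I do not anticipate any genuine obstacle beyond this bookkeeping: the entire argument of Lemma~\ref{lem:F} is scale-covariant in its cutoff parameter, and the present lemma is essentially that proof evaluated at the rescaled cutoff $M^2 h^{2/3}$.
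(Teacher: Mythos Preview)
Your proposal is correct and matches the paper's approach exactly: the paper omits the proof entirely, saying only that it is ``straightforward to obtain from that of Lemma~\ref{lem:F} by tracing the dependence of estimates on $M$,'' which is precisely your plan.

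One small correction worth noting: your truncation is on the wrong side. In Lemma~\ref{lem:F}, $f$ equals $1$ for large positive $t$ and one truncates on the far-\emph{positive} side, justified by $P \leq 1 + O(h^2)$. Here $f_M$ equals $1$ for large \emph{negative} $t$, but the spectrum of $P$ is unbounded below, so truncating there changes $f_M(P)$. The fix is immediate: for $k \geq 1$ apply the argument to $1-f_M$ (noting $(d/dr)^k f_M(P) = -(d/dr)^k(1-f_M)(P)$), which on the spectrum is supported in the bounded interval $[-2M^2 h^{2/3}, 1+O(h^2)]$ and can be truncated above $t=1$; for $k=0$ the bound $\|f_M(P)\|\leq 1$ is trivial from the spectral theorem. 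With this adjustment your bookkeeping goes through unchanged.
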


We omit the proof, which is straightforward to obtain from that of Lemma~\ref{lem:F} by tracing the dependence of estimates on $M$. 

We now show that $\ang{f_M(P)Pv, v}_{L^2(\dOmega)}$ tends to zero as $M \to \infty$. 

\begin{prop}\label{prop:extmass} Suppose that $u \in C^2(\overline{\Omega})$ and satisfies the Neumann boundary condition $d_n u = 0$ at $\dOmega$. 
Let $v = Uu$, as in Section~\ref{sec:1/3-bound}. Then the quantity $\ang{f_M(P)v, v}_{L^2(\dOmega)}$ is bounded by 
\begin{equation}
C h^{-2/3} M^{-3} \Big( \| u \|_{L^2(\Omega)}^2 + h^{-2} \| (h^2 \Delta - 1) u \|_{L^2(\Omega)}^2 + \| \nabla (h^2 \Delta - 1) u \|_{L^2(\Omega)}^2 \Big)
\label{Mbound1}\end{equation}
for $M \geq 1$, and $\ang{f_M(P)Pv, v}$ is bounded by 
\begin{equation}
C' M^{-1} \Big( \| u \|_{L^2(\Omega)}^2 + h^{-2} \| (h^2 \Delta - 1) u \|_{L^2(\Omega)}^2 + \| \nabla (h^2 \Delta - 1) u \|_{L^2(\Omega)}^2 \Big)
\label{Mbound2}\end{equation}
for $M \geq 1$, uniformly for $h < h_0(M)$. 
\end{prop}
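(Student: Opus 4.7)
The plan is to prove \eqref{Mbound2} first by adapting the modified-energy argument of Proposition~\ref{prop:modE}, and then to deduce \eqref{Mbound1} from it by a spectral-calculus trade-off. For the deduction, since $f_M$ is supported in $\{t : |t| \geq M^2 h^{2/3}\}$, the pointwise scalar inequality $f_M(t) \leq (M^2 h^{2/3})^{-1}(-t) f_M(t)$ holds on the support and yields the operator inequality $\ang{f_M(P(0))v,v} \leq (M^2 h^{2/3})^{-1}|\ang{f_M(P(0))P(0)v,v}|$. Combined with \eqref{Mbound2}, this immediately gives \eqref{Mbound1} with the factor $h^{-2/3}M^{-3}$.

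To prove \eqref{Mbound2}, I would follow the proof of Proposition~\ref{prop:modE} with $f$ replaced by $f_M$. Define the modified energy $\tilde E_M(r) = \ang{f_M(P(r))hv_r, hv_r} + \ang{f_M(P(r))P(r)v, v}$; by the Neumann condition at $r = 0$, $\tilde E_M(0) = \ang{f_M(P(0))P(0)v, v}$, the negation of the quantity \eqref{Mbound2} bounds. Write $\tilde E_M(0) = -\int_0^\delta (d/dr)[\eta(r)\tilde E_M(r)]\,dr$ and expand $\dot{\tilde E}_M$ to produce the $f_M$-analogs of the ten terms in \eqref{E24}. The six terms involving $r$-derivatives of $f_M(P)$ (of various orders $k \geq 1$) inherit factors $h^{-2k/3}M^{-2k}$ from Lemma~\ref{lem:FM}; the $\dot f_M(P)P$ contribution requires the $f_M$-analog of Lemma~\ref{lem:fPP}, which follows by the same commutator expansion upon noting that on the spectral support of $f_M'$ one has $|P| \sim M^2 h^{2/3}$. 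In total these six terms are $O(M^{-2})(\text{rhs})$ or smaller.

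The $M^{-1}$ gain on the remaining four non-derivative terms (analogs of terms 1, 2, 6, and 10 in \eqref{E24}) is the essential new ingredient, since the naive bound $\|f_M(P)\|_\text{op}\leq 1$ only gives them $O(1)(\text{rhs})$. I would exploit the spectral localization of $f_M$ in the classically forbidden region together with the elliptic character of $-h^2 \partial_r^2 - P$ there, as follows. Multiplying the equation $h^2 v_{rr} + Pv = -w'$ by $\eta(r) f_M(P) v$ and integrating by parts over $\Cd$ yields an integrated elliptic-type estimate of the form
\begin{equation*}
\int_0^\delta \eta \big(\|f_M(P)^{1/2}hv_r\|^2_{Y_r} + \|f_M(P)^{1/2}(-P)^{1/2}v\|^2_{Y_r}\big)\,dr \leq C\|w'\|_{L^2(\Cd)}\|f_M(P)v\|_{L^2(\Cd)} + (\text{commutators}),
\end{equation*}
where the commutator terms gain factors of $M$ from Lemma~\ref{lem:FM} and the source term benefits from $\|w'\|_{L^2(\Cd)} = O(h)$ combined with the spectral trade $\|f_M(P)v\|^2 \leq (M^2 h^{2/3})^{-1}\|f_M(P)^{1/2}(-P)^{1/2}v\|^2$.

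The hard part will be promoting this integrated estimate to a pointwise bound at $r=0$ that preserves the $M^{-1}$ scaling on each of the four non-derivative terms. This requires careful handling of the boundary contributions from the $r$-integration by parts (using the Neumann condition to eliminate the term at $r=0$ involving $v_r$), combined with a semiclassical trace-type inequality, or alternatively Cauchy--Schwarz pairing of each of the four non-derivative terms against the integrated elliptic bound. The combination of the spectral-trade factor $(M^2 h^{2/3})^{-1/2}$ with the $O(1)(\text{rhs})$ control of the bracketed integrated quantity then yields the required $O(M^{-1})(\text{rhs})$ bound, closing \eqref{Mbound2}.
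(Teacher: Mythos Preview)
Your deduction of \eqref{Mbound1} from \eqref{Mbound2} via the spectral inequality $f_M(t)\leq (M^2h^{2/3})^{-1}(-t)f_M(t)$ is valid. However, your proposed proof of \eqref{Mbound2} has a genuine gap, and the paper in fact proceeds in the opposite order.

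The difficulty is exactly the one you flag as ``the hard part.'' The modified-energy argument of Proposition~\ref{prop:modE} is fundamentally an $O(1)$ estimate: the four non-derivative terms in \eqref{E24} are each bounded by the right-hand side with no decay in $M$. Your integrated elliptic estimate does control $\int_0^\delta\eta\,\langle f_M(P)(-P)v,v\rangle\,dr$, but promoting this to the pointwise quantity $\tilde E_M(0)$ with an $M^{-1}$ gain is not accomplished by the mechanisms you list. In particular, the tenth term $\int_0^\delta\eta\,\langle f_M(P)\dot P v,v\rangle\,dr$ is problematic: the principal symbol of $\dot P$ is $O(|\xi'|^2)$, while on $\supp f_M$ the symbol of $-P$ is only $|\xi'|^2-1\geq M^2h^{2/3}$, so $\dot P$ is not dominated by $-P$ there, and neither a trace inequality nor Cauchy--Schwarz against the integrated bound recovers the missing factor of $M^{-1}$.

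The paper sidesteps this entirely. It proves \eqref{Mbound1} first, not by adapting Proposition~\ref{prop:modE} but by adapting Proposition~\ref{1/3}: set $L(r)=h^2\langle f_M(P)v,v\rangle$, compute $\ddot L$, and observe that the term $2\langle f_M(P)v,h^2v_{rr}\rangle=2\langle f_M(P)(-P)v,v\rangle+\text{(small)}$ contributes at least $2M^2h^{-4/3}L(r)$ because $-P\geq M^2h^{2/3}$ on $\supp f_M$. This yields an ODE inequality $\ddot L\gtrsim 2M^2h^{-4/3}L - CM^{-4}$, and comparison with the explicit solution $Z(r)=L(0)\cosh(Mh^{-2/3}r)-CM^{-3}h^{4/3}\sinh(Mh^{-2/3}r)$ shows that unless $L(0)\leq 2Ch^{4/3}M^{-3}$, the exponential growth of $L$ over $r\in[0,h^{2/3}]$ contradicts $\int L\,dr\leq h^2$. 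This is the ellipticity being exploited directly, without any energy identity. Then \eqref{Mbound2} follows from \eqref{Mbound1} by the dyadic telescoping sum $\langle f_M(P)Pv,v\rangle=\sum_{j\geq 0}\langle(f_{M2^j}-f_{M2^{j+1}})(P)Pv,v\rangle$, bounding $|P|\leq 2M^2 4^j h^{2/3}$ on the $j$th shell and applying \eqref{Mbound1} with $M$ replaced by $M2^j$, which sums geometrically to $O(M^{-1})$.
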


\begin{proof}
To prove this, we adapt the proof of Proposition~\ref{1/3}. First,  scale $u$ so that 
\begin{equation}
 \| u \|_{L^2(\Omega)}^2 + h^{-2} \| (h^2 \Delta - 1) u \|_{L^2(\Omega)}^2 + \| \nabla (h^2 \Delta - 1) u \|_{L^2(\Omega)}^2 = 1.
\label{normalization}\end{equation}
Thus, we need to show that $\ang{f_M(P)v, v} \leq C h^{-2/3} M^{-3}$, and $\ang{f_M(P)Pv, v} \leq C' M^{-1}$.
 Let 
 $$
 L(r) = h^2 \ang{f_M(P)v, v}.
 $$
 Then, with dots indicating differentiation with respect to $r$, 
 $$
 \dot L(r) = h^2 \ang{ \dot {f_M(P)} v, v} + 2 h^2 \ang{f_M(P) v, v_r}
 $$ 
 and
 \begin{equation}
  \ddot L(r) = h^2 \ang{\ddot{f_M(P)} v, v} + 4h \ang{\dot{f_M(P)} v, hv_r} + 2\ang{f_M(P) hv_r, hv_r} + 2 \ang{f_M(P) v, h^2v_{rr}}.
 \label{L''}\end{equation}
 We write
 \begin{equation}
L(r) = L(0) + r \dot L(0) + \int_0^r ds \int_0^s dt \, \ddot L(t). 
 \label{Levolution}\end{equation}
 The key observation is that the term $\ang{f(P)v, h^2 v_{rr}}$ in \eqref{L''} is strongly positive, since $h^2 v_{rr} = - Pv+ w'$, and $-P \geq M^2 h^{2/3}$ on the support of $f_M$. We will show that, unless $L(0)$ is very small,  this term drives exponential growth of $L$ which will contradict \eqref{normalization}, by integrating $L$ on the interval $[0, h^{2/3}]$. 
 
Using the Neumann boundary condition, Lemma~\ref{lem:FM} and Proposition~\ref{1/3}, we estimate $L'(0)$ from below: 
 $$
 \dot L(0) \geq -\frac{Ch^{2/3}}{M^2}. 
 $$
 Similarly, for the terms on the RHS of \eqref{L''},  we have
 $$
|\ang{\ddot{f_M(P)} v, v}| \leq \frac{C}{M^4},$$
$$
 \quad 4h \ang{\dot{f_M(P)} v, hv_r} = 2h^2 \frac{d}{dr} \ang{\dot{f_M(P)} v, v} - 2h^2 \ang{\ddot{f_M(P)} v, v} \geq 2 h^2\frac{d}{dr} \ang{\dot{f_M(P)} v, v} -\frac{Ch^{2/3}}{M^2},
$$
and the key inequality
$$
2 \ang{f(P) v, h^2 v_{rr}} \geq 2M^2 h^{2/3} L(r) -  \ang{f_M(P)v,w'} \geq  2M^2 h^{-4/3} L(r) - h^{1/6}.
$$
Putting these together, and choosing $h$ small enough so that $h^{1/6} < M^{-4}$, we obtain
\begin{equation}
\ddot L(r) \geq -\frac{C}{M^4} + 2 h^2 \frac{d}{dr} \ang{\dot{f_M(P)} v, v} +  2M^2 h^{-4/3} L(r).
\end{equation}
Using this in \eqref{Levolution},  we get an inequality
$$
L(r) \geq L(0) - r \frac{Ch^{2/3}}{M^2} + \int_0^r ds \int_0^s dt \Big( -\frac{C}{M^4} + 2h^2 \frac{d}{dt} \ang{ \dot{f_M(P)} v, v} +2 M^2 h^{-4/3} L(t) \Big).
$$
 For $r \in [0, h^{2/3}]$, using Lemma~\ref{lem:FM} and Proposition~\ref{1/3} again, the first two terms in the big bracket can be absorbed by the $Crh^{2/3}M^{-2}$ term. We get  
 $$
L(r) \geq L(0) - r \frac{Ch^{2/3}}{M^2} + \int_0^r ds \int_0^s 2 M^2 h^{-4/3} L(t)  \, dt.
$$

It is straightforward to check that a comparison principle holds: $L(r)$ is $\geq Z(r)$ where $Z(r)$ satisfies the corresponding equality
$$
Z(r) = Z(0) - r \frac{Ch^{2/3}}{M^2} + \int_0^r ds \int_0^s  M^2 h^{-4/3} Z(t)  \, dt, \quad Z(0) = L(0). 
$$
This we can solve exactly: differentiating twice gives us
$$
\ddot Z(r) = M^2 h^{-4/3} Z(r), \quad Z(0) = L(0), \quad \dot Z(0) = -\frac{Ch^{2/3}}{M^2}.
$$
The solution is 
$$
Z(r) = L(0) \cosh (M h^{-2/3} r) - \frac{C h^{4/3}}{M^3} \sinh  (M h^{-2/3} r). 
$$
We can estimate
$$
L(r) \geq Z(r) \geq \Big( L(0) - \frac{C h^{4/3}}{M^3} \Big) e^{M h^{-2/3} r}.
$$
Now suppose, for a contradiction, that $L(0)$ was bigger than $2Ch^{4/3}M^{-3}$. Then this inequality would tell us that 
$$
L(r) \geq \frac{C h^{4/3}}{M^3}  e^{M h^{-2/3} r}, \quad r \in [0, h^{2/3}].
$$
Integrating this on $[0, h^{2/3}]$ gives 
$$
\int_0^{h^{2/3}} L(r) \, dr \geq \frac{C h^{4/3}}{M^{3}} \Big( \frac{e^{M} - 1}{M h^{-2/3}} \Big)= h^2 \frac{C(e^{M} - 1)}{M^4} ,
$$
which is bigger than $h^2 $ for large $M$, and gives us our contradiction. We conclude that 
$$
L(0) \leq \frac{2Ch^{4/3}}{M^{3}},
$$
proving \eqref{Mbound1}. 

We finally show that $\ang{f_M(P)P u, u}$ is small when $M$ is large. We estimate
\begin{equation}\begin{gathered}
\ang{f_M(P) Pu, u} = \sum_{j=0}^\infty \ang{\big(f_{M 2^j}(P) - f_{M 2^{j+1}}(P)\big)P u, u} \\
\leq \sum_{j=0}^\infty (2M^2 2^{2j} h^{2/3})\ang{f_{M 2^j}(P)  u, u} \\
\leq \sum_{j=0}^\infty (2M^2 2^{2j}h^{2/3})\frac{2Ch^{-2/3}}{M^3 2^{3j}}  \\
= 8C M^{-1} \sum_{j=0}^\infty 2^{-j} = 16C M^{-1}, 
\end{gathered}\end{equation}
proving \eqref{Mbound2}. 
\end{proof}

\begin{remark} Although not relevant to the present paper, we remark that the decay of \eqref{Mbound2} is, in fact, much faster than $M^{-1}$. Using a slightly more elaborate setup, in which we use a cutoff $f_M(t)$ that transitions between $1$ and $0$ as $t$ changes from $-M$ to $-(M+1)$, and an induction on $M \in \NN$, then we can show exponential decay of \eqref{Mbound2}. Actually, we conjecture that the decay in $M$ is even better than this, perhaps as rapid as the decay of the Airy function, i.e. $ \sim e^{-M^{3/2}}$. 

This estimate is closely related to the exterior mass estimate in \cite{MR3340369}. There, it is shown that (at least for interior hypersurfaces) if one cuts off at distance $\sim h^\delta$ from the classically allowed region, $0 < \delta < 2/3$, then the error term is $O(h^\infty)$. The method of the present paper applies to the interior hypersurface case too, and provides an endpoint result for this exterior mass estimate, in which we cut off at distance $M h^{2/3}$ and show decay as $M \to \infty$. 
\end{remark}

%%%%%%%%%%%%%%%%%%%%%%%%%%%%%%%%%%%%%%%%%%%%%%%%%%%%%%%%%%%%%%%%%%%%
%%%%%%%%%%%%%%%%%%%%%%%%%%%%%%%%%%%%%%%%%%%%%%%%%%%%%%%%%%%%%%%%%%%%%

\section{Lower bound}\label{sec:lowerbound}

To prove the lower bound in Theorem~\ref{Neu-est}, we return to the assumption that $\Omega$ is a Euclidean domain with smooth boundary. 

\begin{prop}\label{lowerbound} Let $\Omega$ be a Euclidean domain with smooth boundary. Assume that $u \in C^3(\overline{\Omega})$ satisfies the Neumann boundary condition and is normalized in $L^2(\Omega)$. Let $w = (h^2 \Delta - 1) u$. Then there exists $\delta > 0$ and $c > 0$ depending only on $\Omega$ such that, provided 
\begin{equation}
h^{-1} \| w \| + \| \nabla w \| \leq \delta, 
\label{delta} \end{equation}
there is a uniform lower bound  
\begin{equation}
\| G_h(1 - h^2 \Deltab) \ud \|_{L^2(\partial \Omega)} \geq c,
\label{eq:lowerbound}
\end{equation}
where $G_h$ is as in \eqref{Gh}. In particular, the lower bound holds for Neumann eigenfunctions with eigenvalue $h^{-2}$. 
\end{prop}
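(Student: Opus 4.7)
The plan is to derive a Rellich-type multiplier identity — available because $\Omega$ is a Euclidean domain — and combine it with the spectral decomposition of $P:=1-h^2\Deltab$ together with the exterior mass estimate (Proposition~\ref{prop:extmass}). Multiplying the equation $(h^2\Delta - 1)u = w$ by the classical Morawetz multiplier $(x-x_0)\cdot\nabla u$ for some convenient $x_0 \in \RR^n$, integrating over $\Omega$, and integrating by parts using the Neumann condition $d_n u = 0$ on $\dOmega$, standard Pohozaev/Rellich manipulations yield an identity of the form
\begin{equation*}
2\,\|u\|^2_{L^2(\Omega)}\;=\;\int_{\dOmega}\bigl((x-x_0)\cdot\nu\bigr)\bigl(u^2 - h^2|\nabla_{\dOmega} u|^2\bigr)\,dS\;+\;R(u,w),
\end{equation*}
where $\nu$ is the outward unit normal and $R(u,w)$ bundles pairings of $w$ (and $\nabla w$) against $u$ and $(x-x_0)\cdot\nabla u$. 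Using the a priori bound $h\|\nabla u\|_{L^2(\Omega)}\lesssim 1$ (obtained by testing the equation against $u$ and applying the Neumann boundary condition), one finds $|R(u,w)| = O(\delta)$ under hypothesis~\eqref{delta}.

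Setting $\psi(y):=(x(y)-x_0)\cdot\nu(y)$, integration by parts on the closed manifold $\dOmega$ rewrites the boundary integral as
\begin{equation*}
\int_{\dOmega}\psi\,\bigl(u^2 - h^2|\nabla_{\dOmega} u|^2\bigr)\,dS \;=\; \langle M_\psi\,P\,\ud,\, \ud\rangle_{L^2(\dOmega)} \;+\; h^2\langle (\nabla_{\dOmega}\psi)\cdot\nabla_{\dOmega} u,\, u\rangle_{L^2(\dOmega)},
\end{equation*}
where $M_\psi$ denotes multiplication by $\psi$. The commutator term is $O(h^{1/3})$ by Proposition~\ref{1/3} together with the companion bound $\|h\nabla_{\dOmega}u\|_{L^2(\dOmega)} \lesssim h^{-1/3}$ deducible from the boundedness of the modified energy $\tilde E(0)$. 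The main work is then to bound $|\langle M_\psi P \ud, \ud\rangle|$ above in terms of $\|G_h(P)\ud\|^2_{L^2(\dOmega)}$ plus small errors. Decompose $P = P_+ - P_-$ via functional calculus. The operator inequality $P_+ \leq 2G_h(P)^2$ (immediate from~\eqref{Gh}) together with commutator estimates $\|[G_h(P), M_\psi]\|_{L^2\to L^2} = O(h^{1/3})$ derived from Helffer--Sj\"ostrand as in Lemma~\ref{lem:F} controls the $P_+$-contribution by $C\|G_h\ud\|^2 + o(1)$. For the $P_-$-contribution, insert a spectral threshold at $P=-M^2h^{2/3}$: on the middle range $P\in[-M^2 h^{2/3},0]$, the operator $P_-$ is bounded by $M^2 h^{2/3}$ while $G_h(P)^2 \geq h^{2/3}$ throughout, giving a contribution of at most $CM^2\|G_h \ud\|^2$; on the far range $P \leq -M^2h^{2/3}$, Cauchy--Schwarz applied to the nonnegative operator $P_- f_M(P)$ combined with Proposition~\ref{prop:extmass} yields a contribution of $O(M^{-1/2})$.

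Assembling all the pieces, with $\|u\|_{L^2(\Omega)}=1$, gives
\begin{equation*}
2 \;\leq\; C_1(1+M^2)\,\|G_h(P)\,\ud\|^2_{L^2(\dOmega)} \;+\; C_2 M^{-1/2} \;+\; O(h^{1/3}) \;+\; O(\delta).
\end{equation*}
Choosing $M$ sufficiently large (so that $C_2 M^{-1/2} < 1/2$) and then $\delta$ and $h$ sufficiently small, we conclude $\|G_h(P)\,\ud\|^2_{L^2(\dOmega)} \geq c > 0$, which is~\eqref{eq:lowerbound}. The main obstacle is the careful management of commutators between the multiplication $M_\psi$ by the non-constant (possibly sign-changing) weight $\psi=(x-x_0)\cdot\nu$ and smooth spectral cutoffs of $P$, since the spectral projections onto $\{P\gtrless 0\}$ are not smooth at the threshold $\sigma=0$. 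The remedy, as in Section~\ref{sec:modE}, is to work with smooth surrogates such as $\chi_1(Ph^{-2/3})^2\,P$ for $P_+$ (and an analogue for $P_-$), and to exploit the Helffer--Sj\"ostrand commutator bounds of Lemma~\ref{lem:F} applied to these $h$-dependent cutoffs.
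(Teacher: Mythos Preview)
Your approach is essentially identical to the paper's: the same Rellich/Pohozaev identity producing the boundary term $\langle (x\cdot n)\,P\,\ud,\ud\rangle$, the same three-region spectral decomposition of $P$ (interior/tangential/exterior), and the same reliance on commutator estimates together with the exterior mass estimate (Proposition~\ref{prop:extmass}). One quantitative point to sharpen: to make the commutator contribution in the $P_+$ region genuinely $o(1)$ you need $\|[G_h(P),M_\psi]\|=O(h^{2/3})$ rather than $O(h^{1/3})$, since it is paired against $\|\ud\|_{L^2(\dOmega)}=O(h^{-1/3})$; this sharper bound is exactly what the paper establishes in Lemma~\ref{lem:comm} via the Helffer--Sj\"ostrand expansion you invoke, so the fix is immediate.
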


\begin{remark} In order to get the lower bound \eqref{eq:lowerbound},  one must take $\delta$ sufficiently small. For example, consider two consecutive Neumann eigenfunctions of the disc with given angular dependence $e^{in\theta}$. These will take the form $u_1 = e^{in\theta} J_n(\lambda'_{n,l}r)$ and $u_2 = e^{in\theta} J_n(\lambda'_{n,l+1}r)$, where $\lambda'_{n,l}$
is the $l$th positive zero of $J_n'$. A suitable linear combination of these two eigenfunctions, say $u_1 + a u_2$, will vanish at the boundary, since each is a multiple of $e^{in\theta}$ there. However, with $h = 1/{\lambda'_{n,l}}$, 
$$
(h^2 \Delta - 1)(u_1 + a u_2) = a \Big( \frac{{\lambda'_{n,l+1}}^{2}}{{\lambda'_{n,l}}^{2}} - 1 \Big)  u_2  = O(h) \text{ in } L^2
$$
for fixed $n$ and $l$ large, since $\lambda_{n,l} \to \infty$ and $\lambda_{n,l+1}- \lambda_{n,l} \to \pi$ as $l \to \infty$. Similarly, we have $\| \nabla (u_1 + a u_2) \| \leq C$. Thus for sufficiently large $\delta$, we can find a sequence of $u_h$, with $h \to 0$, satisfying \eqref{delta} uniformly in $h$ and with $G_h(P) \ud$ vanishing identically. 
\end{remark}

\begin{proof}
We use a Rellich identity, which arises from the commutator of the Euclidean Laplacian with the vector field $X = x \cdot \partial_x$ generating dilations. Since $[\Delta, X] = 2 \Delta$, we find that 
\begin{equation}\begin{gathered}
2 = \| u \|_{L^2(\Omega)}^2 =  \int_{\Omega} u [h^2\Delta -1, X] u - 2 \int_{\Omega} u w  \\
= \int_{\Omega} u (h^2 \Delta - 1) (X u) - u (X w) - 2 \int_{\Omega} u w    \\
= \int_{\Omega} \Big( u (h^2 \Delta - 1) (X u) - (h^2 \Delta - 1) u (X u) \Big) + \int_{\Omega} \Big( w (Xu) - u (Xw) - 2 u w\Big)  \\
= h^2 \int_{\dOmega} \Big( u_r (X u) - u \partial_r (X u)\Big)  + \int_{\Omega} \Big( w (Xu) - u (Xw) - 2 u w\Big) .
\end{gathered}\end{equation}
The first term in the last line is zero, due to the boundary condition. Also, under the assumption \eqref{delta}, 
the integral over $\Omega$ in the last line is $O(\delta)$, following the same reasoning as in Section~\ref{sec:1/3-bound}, and hence can be absorbed in the left hand side for sufficiently small $\delta$. This yields, for $h$ sufficiently small,  
$$
\frac{3}{2} \leq  -h^2 \int_{\dOmega}  u \partial_r (X u).
$$
Consider the differential operator
$\partial_r \circ X$. This is equal to $X \circ \partial_r$ up to first order vector fields, say $V_{tan} + V_{norm}$ where $V_{tan}$ is tangent to $\dOmega$ and $V_{norm}$ is normal. We can discard $V_{norm}$ due to the boundary condition. Also, $X (u_r) = (x \cdot n) u_{rr}$, where $n$ is the incoming unit normal vector, since a tangential vector applied to $u_r$ vanishes at the boundary. Thus, we get 
$$
\frac{3}{2} \leq  h^2 \int_{\dOmega} \Big( - (x \cdot n)u  u_{rr} + u V_{tan} u \Big).
$$
With $P = P(r)$ as in Proposition~\ref{prop:modE}, we notice that we can substitute $-h^2 u_{rr} = P u + h^2 bu + w$ at the boundary. The $h^2 b u + w$ term can be absorbed in the left hand side since $\| u \|_{L^2(\dOmega)} = O(h^{-1/3})$ and $\| w \|_{L^2(\dOmega)} = O(h^{1/2})$  using the same reasoning as in Section~\ref{sec:1/3-bound}.   On the other hand, 
$$
h^2 \int_{\dOmega} u V_{\tan} u = \frac{h^2}{2} \int_{\dOmega} V_{tan}(u^2) = -\frac{h^2}{2} \int_{\dOmega} u^2 (\operatorname{div} V_{tan}) = O(h^{4/3}),
$$
so this term can also be absorbed into the LHS for small $h$. We obtain
\begin{equation}
1 \leq \int_{\dOmega} (x \cdot n) u Pu. 
\label{uPu}\end{equation}
By manipulating the RHS of \eqref{uPu}, we will obtain our lower bound. The idea is quite simple: on the classically allowed region, where $u$ is concentrated, $P$ is nonnegative. If we pretend for a moment that $P$ is nonnegative, and moreover that $P$ commutes with $x \cdot n$, then we could write the RHS as an inner product
$
\langle P^{1/2} \ud, (x \cdot n) P^{1/2} \ud \rangle 
$
which could in turn be estimated by $C \| P^{1/2} \ud \|_{L^2(\dOmega)}^2$, where $C$ is an upper bound for $x \cdot n$. This would essentially give the result as $G_h(1 - h^2 \Deltab)$ is just a regularization of $P_+^{1/2}$. 

We now give the details. 
We insert cutoffs $\Id = \chi_{in}^2(P) + \chi_{tan}^2(P) + \chi_{out}^2(P)$, acting in $L^2(\dOmega)$, where $\chi_{in}^2(t)$ is supported where $t \geq h^{2/3}$, $\chi_{tan}^2(t)$  is supported where $t \in [ - M^2h^{2/3}, h^{2/3}]$, $\chi_{out}^2(t)$ is supported where $t \leq -M^2 h^{2/3}$. For example, we can choose $\chi_{in}^2(P)$ can be of the form $\chi_1(P h^{-2/3})$ in the definition of $G$ in \eqref{Gh}; this means that $G_h(P) = \chi_{in}(P) P^{1/2} + h^{1/3} \chi_2(Ph^{-2/3})$. Notice that $\chi_2(\cdot h^{-2/3}) = \sqrt{\chi_{tan}^2 + \chi_{out}^2}$, and in particular is greater than or equal to $\chi_{tan}$. We will actually find a lower bound on the quantity 
\begin{equation}
\big\| \big( \chi_{in}(P) P^{1/2} + h^{1/3} \chi_{\tan}(P) \big) u \Big\|,
\label{Gstrong}\end{equation}
which is a stronger result than claimed in the Proposition. 

To proceed, we need the following lemma.

\begin{lem}\label{lem:comm} The commutators $[(x \cdot n), \chi_{in}(P) P^{1/2}]$  and $[(x \cdot n), \chi_{out}(P) (-P)^{1/2}]$
have operator norm bounded by $C h^{2/3}$ for small $h$. The commutator $[(x \cdot n), \chi_{tan}(P)P]$ has operator norm bounded by $C h$ for small $h$. 
\end{lem}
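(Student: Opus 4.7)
The plan is to use the Helffer--Sj\"ostrand formula \eqref{HSform}, exactly as in Lemma~\ref{lem:F}, reducing each commutator to a contour integral that can be estimated by exploiting the fact that $[x\cdot n,P]$ gains a factor of $h$.

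First, I would compute $[x\cdot n, P(0)]$ explicitly. Since $P(0) = 1 - h^2 \Deltab + h^2 b'$ and $b'$ commutes with $x\cdot n$, we have $[x\cdot n, P(0)] = -h^2[x\cdot n, \Deltab]$, and the identity $[f,\Deltab] = 2(\nabla f)\cdot \nabla - \Deltab f$ shows that $[x\cdot n, P(0)] = h R$ where $R$ is a semiclassical first-order differential operator on $\dOmega$. Since $2 - P \geq 1$, the semiclassical Sobolev estimate \eqref{sclSob} implies that $R(2-P)^{-1}$ is bounded on $L^2(\dOmega)$ uniformly in $h$, and the resolvent identity $(2-P)(P-z)^{-1} = -1 + (2-z)(P-z)^{-1}$ then yields the key bound
\begin{equation*}
\|(P-z)^{-1} R (P-z)^{-1}\| \leq C |\Im z|^{-2}, \qquad z \in \text{bounded region of } \CC.
\end{equation*}

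Applying the Helffer--Sj\"ostrand formula to each of the three functions $g_1(t) = \chi_{in}(t) t^{1/2}$, $g_2(t) = \chi_{out}(t)(-t)^{1/2}$, $g_3(t) = t\chi_{tan}(t)$ yields
\begin{equation*}
[x\cdot n, g_i(P)] = -\frac{h}{2\pi}\int_{\CC}\dbar G_i(z)\, (P-z)^{-1} R (P-z)^{-1}\, dL(z),
\end{equation*}
and the problem reduces to constructing suitable almost analytic extensions $G_i$ and integrating. For $g_3$, whose derivatives satisfy $|g_3^{(k)}| \leq C h^{2/3 - 2k/3}$ on a support of length $O(h^{2/3})$, the extension $G_3$ can be built with support of area $O(h^{4/3})$ and $|\dbar G_3(z)| \leq C_N h^{-2N/3}|\Im z|^N$; choosing $N=2$ gives $\|[x\cdot n, g_3(P)]\| \leq Ch$, as claimed. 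For $g_1$, which is not compactly supported, I would split $g_1 = g_1^{\mathrm{tr}} + g_1^{\mathrm{sm}}$, where $g_1^{\mathrm{tr}}$ is a cutoff localized in the transition region $[h^{2/3}, Ch^{2/3}]$ (with derivatives $|g_1^{\mathrm{tr},(k)}| \leq C h^{1/3-2k/3}$), and $g_1^{\mathrm{sm}}$ is $h$-independent once truncated above $t = 2$ (permissible because $P \leq 1 + O(h^2)$). The smooth part contributes $O(h)$ by a direct application of the formula, and the transition part contributes $O(h \cdot h^{-5/3}\cdot h^{4/3}) = O(h^{2/3})$ by the same calculation as for $g_3$ but with an extra factor of $h^{-1/3}$ from the larger derivative bounds.

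The main obstacle is $g_2$, because the spectrum of $P$ on $\dOmega$ extends to $-\infty$, so $\chi_{out}(P)(-P)^{1/2}$ itself is unbounded as an operator on $L^2(\dOmega)$. I would handle this by factoring out the growth: write $g_2(t) = \tilde g_2(t)(2-t)$ with $\tilde g_2(t) := \chi_{out}(t)(-t)^{1/2}(2-t)^{-1}$, which is bounded and whose derivatives display the same transition-region behaviour as those of $g_1$. Expanding
\begin{equation*}
[x\cdot n, \tilde g_2(P)(2-P)] = [x\cdot n, \tilde g_2(P)](2-P) + \tilde g_2(P)[x\cdot n, 2-P],
\end{equation*}
the second summand is $O(h)$ directly since $[x\cdot n, 2-P] = -hR$ and $\tilde g_2(P) R$ can be made uniformly bounded by inserting an extra $(2-P)^{-1}$. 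For the first, the right-hand factor $(2-P)$ is absorbed inside the Helffer--Sj\"ostrand integral for $[x\cdot n, \tilde g_2(P)]$ via $(P-z)^{-1}(2-P) = -1 + (2-z)(P-z)^{-1}$, after which the same transition/smooth splitting used for $g_1$ delivers the bound $O(h^{2/3})$.
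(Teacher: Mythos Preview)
Your overall strategy---Helffer--Sj\"ostrand plus the $h$-gain from $[x\cdot n,P]=hR$---matches the paper's, and your treatment of $g_3$ is correct (indeed slightly cleaner than what the paper does). But the handling of $g_1$ (and, via $\tilde g_2$, of $g_2$) has a real gap.

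You claim a splitting $g_1=g_1^{\mathrm{tr}}+g_1^{\mathrm{sm}}$ with $g_1^{\mathrm{tr}}$ supported in $[h^{2/3},Ch^{2/3}]$ and $g_1^{\mathrm{sm}}$ both smooth and $h$-independent. This is impossible: on $(Ch^{2/3},1]$ one has $g_1(t)=t^{1/2}$ and $g_1^{\mathrm{tr}}(t)=0$, so $g_1^{\mathrm{sm}}(t)=t^{1/2}$ there; letting $h\to0$ forces $g_1^{\mathrm{sm}}(t)=t^{1/2}$ on all of $(0,1]$, and no $C^1$ function on $\RR$ extends $t^{1/2}$ past $0$. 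If instead you keep $g_1^{\mathrm{sm}}$ smooth (hence $h$-dependent, with its own transition near $t\sim h^{2/3}$), the ``direct application'' you invoke no longer gives $O(h)$: a standard almost-analytic extension of $g_1^{\mathrm{sm}}$ on $[h^{2/3},2]$ has $\sup|(g_1^{\mathrm{sm}})^{(3)}|\sim t^{-5/2}\big|_{t=h^{2/3}}=h^{-5/3}$ over a support of area $O(1)$, and $h\int h^{-5/3}|\Im z|^2\cdot|\Im z|^{-2}\,dL=O(h^{-2/3})$, far from $O(h^{2/3})$.

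The paper closes this gap differently: rather than splitting $g_1$, it iterates the resolvent/commutator identity inside the Helffer--Sj\"ostrand integral (exactly parallel to Lemma~\ref{lem:F}) to produce
\[
[B,j(P)]=j'(P)[B,P]+j''(P)[[B,P],P]+\cdots+j^{(l)}(P)[\ldots[[B,P],P],\ldots,P]\ \ \text{mod}\ \mathcal{O}_J^{1-l}.
\]
The $l$th term has norm $O(h^{(1+l)/3})$, because $\sup_t|j^{(l)}(t)|\sim h^{(1-2l)/3}$ is paired with an $l$-fold commutator worth $h^l$ times an order-$l$ semiclassical operator; the remainder is handled by the $\mathcal{O}_J^k$ calculus. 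Taking $l=5$ yields $O(h^{2/3})$. Your one-step estimate can be salvaged---for instance by exploiting that $z^{1/2}$ is genuinely analytic on the slit plane, so an almost-analytic extension of $g_1$ can be chosen with $\dbar G_1$ supported \emph{only} near the two cutoffs, or by replacing your two-piece split with a dyadic one on $[h^{2/3},1]$---but some such device must be made explicit. The same repair is needed for $\tilde g_2$; your factoring $g_2=\tilde g_2\cdot(2-t)$ is a sound way to tame the growth at $-\infty$, but it does not by itself cure the transition-region issue.
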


So as not to interrupt the flow of the argument, we postpone the proof of this Lemma until the end of the section. 

We then use this commutator estimate to obtain 
\begin{equation}\begin{aligned}
1 &\leq \ang{ (x \cdot n) Pu, u} \\
&= \bang{(x \cdot n) \big( \chi_{in}^2(P) + \chi_{tan}^2(P) + \chi_{out}^2(P) \big) Pu, u} \\
&= \bang{(x \cdot n) \chi_{in}(P) P^{1/2} u, \chi_{in}(P) P^{1/2} u} + \bang{(x \cdot n) \chi_{tan}(P) Pu, \chi_{tan}(P)u } \\  &\ \ \ - \bang{(x \cdot n)\chi_{out}(P) (-P)^{1/2} u, \chi_{out}(P) (-P)^{1/2} u} \\
&\ \ \ + \bang{[(x \cdot n), \chi_{in}(P) P^{1/2}] \chi_{in}(P) P^{1/2} u, u } + 
\bang{[(x \cdot n), \chi_{tan}^2(P) P] u, u}  \\  & \ \ \ + 
\bang{[(x \cdot n), \chi_{out}(P) (-P)^{1/2}] \chi_{out}(P) (-P)^{1/2} u, u } .
\end{aligned}\label{mess}\end{equation}
Writing $C$ for a bound on the function $|(x \cdot n)|$, we estimate the three commutator terms in the expression above. The first can be estimated, using Lemma~\ref{lem:comm}, by 
\begin{equation}\begin{gathered}
\big\| [(x \cdot n), \chi_{in}(P) P^{1/2}] \big\| \, \| \chi_{in}(P) P^{1/2} u \| \, \| u \| \\
\leq C h^{2/3} \times \sqrt{\ang{\chi_{in}^2(P) P u, u}} \times \| u \| \\
\leq C h^{2/3} \times h^0 \times h^{-1/3}  \\ = O(h^{1/3}). 
\end{gathered}\end{equation}
Here we used Cauchy-Schwarz, Proposition~\ref{1/3} and Proposition~\ref{prop:modE}  to estimate the square root factor. 
The other commutator terms are estimated similarly. So we can estimate the RHS of \eqref{mess} by 
\begin{equation}\begin{aligned}
C &\| \chi_{in}(P) P^{1/2} u \|^2 + C \|  \chi_{tan}(P) Pu \| \|  \chi_{tan}(P) u \| + C \| \chi_{out}(P) (-P)^{1/2} u \|^2 \\
&\leq C \Big(  \| \chi_{in}(P) P^{1/2} u \|^2 +  M^2 h^{2/3}  \|  \chi_{tan}(P) u \|^2 \Big) +O(M^{-1}) 
+ O(h^{1/3}) 
\end{aligned}\end{equation}
where we used the fact that $|t| \leq M^2 h^{2/3}$ on the support of $\chi_{tan}$, and  Proposition~\ref{prop:extmass} for the $\chi_{out}$ term. 

Choosing a sufficiently large value of $M$, and sufficiently small $h$ depending on $M$, we absorb the last two error terms in the LHS,  and we get a positive
lower bound on 
$$
 \| \chi_{in}(P) P^{1/2} u \|^2 +  M^2 h^{2/3}  \|  \chi_{tan}(P) u \|^2.
 $$
 Finally, since $P$ is a positive operator on the support of $\chi_{in} \chi_{tan}$, spectral theory shows that this is less than or equal to \eqref{Gstrong}, so we also get a uniform positive lower bound on \eqref{Gstrong}, which as we already mentioned, is a slightly stronger result than claimed in the statement of the Proposition. 
\end{proof}

\begin{proof}[Proof of Lemma~\ref{lem:comm}] 
Let $B$ denote the operation of multiplication by $x \cdot n$.
We prove the lemma just in the case of the first commutator, $[B, \chi_{in}(P) P^{1/2}]$. As the method of proof is very similar to the proofs of Lemmas~\ref{lem:F} and \ref{lem:fPP}, we only sketch the argument. 
To prove the estimate, we choose a smooth, compactly supported function $j(t)$ equal to $\chi_{in}(t) (1-t)^{1/2}$ 
 for $t \leq 1$, and let $J(z)$ be an almost analytic extension of $j(t)$.   Then we can expand the commutator $[B, j(P)]$ as 
$$
\int_{\CC} \dbar J(z) (P - z)^{-1} [B, P] (P - z)^{-1} dL(z)
$$
$$
= \int_{\CC} \dbar J(z) \Big( (P - z)^{-2} [B, P] + (P-z)^{-2} [[B, P], P] (P-z)^{-1} \Big)
$$
\begin{multline*}
= \int_{\CC} \dbar J(z) \Big( (P - z)^{-2} [B, P] + (P-z)^{-3} [[B, P], P] \\ + (P-z)^{-3} [[[B, P], P], P] (P-z)^{-1} \Big) dL(z) 
\end{multline*}
and so on. Similar to the proof of Lemma~\ref{lem:F}, we define 
$\mathcal{O}_J^k$ be the set of operators $A$ on $L^2(Y_r)$ that are expressible in the following way: 
\begin{equation}
\int_{\CC} \partial^{j_1} \dbar J(z) Q_1(z) \dots Q_l(z) \, dL(z)
\label{Ok}\end{equation}
where 
\begin{itemize}
\item 
each $Q_i$ is either $(P-z)^{-1}$ or a multi-commutator involving $B, P$, with $j_2$ factors of $(P-z)^{-1}$ and $j_3$ commutators, such that
\begin{equation}
j_2 \geq 1 \text{ and } 2(j_1 +j_2) - 3 j_3 \leq k;
\label{jk}\end{equation}
\item
the total differential order of the product $Q_i \dots Q_l$ is nonpositive. 
\end{itemize}
Thus, we can generate an expansion for $[B, \chi_{in}(P) P^{1/2}]$ of the form 
$$j'(P)[B, P] + j''(P)[[B, P], P] + \dots + j^{(l)}(P) [ \dots [[B, P], P] \dots , P] \text{ modulo } \mathcal{O}_J^{1-l}.
$$
As in the proof of Lemma~\ref{lem:F}, we check that the $l$th term in this expansion has operator norm $O(h^{(1+l)/3})$, while any element of $\mathcal{O}_J^{k}$ has operator norm  $O(h^{-2/3-k/3})$. That is,  the successive terms in the expansion of $[B, \chi_{in}(P) P^{1/2}]$, as well as the remainder term, improve by $h^{1/3}$.  By taking a suitable $l$ ($l = 5$ suffices) we see that $[B, \chi_{in}(P) P^{1/2}]$ has operator norm $O(h^{2/3})$. 
%
%
% 
%
%
%The first term is $j'(P) [B, P]$ and it is easy to see that $|j'| \leq C h^{-1/3}$, while the commutator is $O(h)$, so this term is $O(h^{2/3})$. The second term is estimated similarly, and is $O(h)$.  Each successive term is $h^{1/3}$ better. The remainder term gets better too, since each commutator gains $h$, while the extra power of $|\Im z|^{-1}$ loses $h^{2/3}$. 
\end{proof}

\begin{remark} From the microlocal point of view, it is natural to expect that the lower bound holds on a compact Riemannian manifold, provided that the boundary `geometrically controls' the manifold in the sense of \cite{BLR}, that is, that there is a time $T$ such that every billiard trajectory of length $T$ meets the boundary non-tangentially. Indeed, one expects to be able to localize further to any open set in the boundary that geometrically controls the manifold. However, such a result certainly requires the full use of microlocal machinery, which we have elected to avoid in the present work. We hope to return to this question in a future  article. 
\end{remark}

%%%%%%%%%%%%%%%%%%%%%%%%%%%%%%%%%%%%%%%%%%%%%%%%%%%%%%%%%
%%%%%%%%%%%%%%%%%%%%%%%%%%%%%%%%%%%%%%%%%%%%%%%%%%%%%%%%%

\section{Inclusion bound}\label{sec:inclusion}

%Recall from \eqref{tttm-def} the definition of the modified Neumann tension
%$$\tilde{t}_{h}[u]=\frac{\norm{F_{h}(1-h^2\Delta_{\dOmega})(d_{n}u)}_{L^{2}(\dOmega)}}{\norm{u}_{L^{2}(\Omega)}}$$
%where $F_{h}$ is the inverse of $G_{h}$. We now wish to prove upper and lower bounds 
%\begin{equation}c\tilde{t}_{h}[u_{\min}]\leq{}d(h^{-2},\spec_{N})\leq{}C\tilde{t}_{h}[u]. 
%\label{incbound}\end{equation}
%Such a bound is called an inclusion bound in the numerical analysis literature, as it identifies a subset of the real line that includes an eigenvalue. 

\begin{proof}[Proof of \eqref{inclusion-eval}]
We will employ much the same method as used to obtain the inclusion bound in \cite{bnds}. Clearly \eqref{inclusion-eval} holds  when $E$ is in the spectrum. For some $E$ not in the spectrum consider let $Z(E)$ be the solution operator for the Helmholtz problem
$$\begin{cases}
(\Delta-E)u=0\\
d_{n}u=G_{h}(1-h^{2}\Delta_{\dOmega})f\end{cases}, \quad h = E^{-1/2}. $$
That is $Z(E):f\to{}u$. To calculate the $L^{2}(\dOmega)\to{}L^{2}(\Omega)$ norm of $Z$ we seek to maximise the quantity
$$\frac{\norm{Z(E)[f]}_{L^{2}(\Omega)}}{\norm{f}_{L^{2}(\dOmega)}}=\frac{\norm{u}_{L^{2}(\Omega)}}{\norm{F_{h}(1-h^{2}\Delta_{\dOmega})d_{n}u}_{L^{2}(\dOmega)}}$$
but this is the same as maximising $\tilde{t}_{h}[u]^{-1}$. Therefore
$$(\min_{u}\tilde{t}_{h}[u])^{-1}=\norm{Z(E)}_{L^{2}(\dOmega)\to{}L^{2}(\Omega)}=\norm{Z(E)^{\star}}_{L^{2}(\Omega)\to{}L^{2}(\dOmega)}.$$

We will develop expressions for $Z(E)$ and $Z(E)^\star$ in terms of the Neumann eigenfunctions and use these expressions to find lower and upper bounds for $(\min_{u}\tilde{t}[u])^{-1}$. Leting $u=Z(E)[f]$, we decompose $u$ as a sum of the Neumann eigenfunctions 
$$u=\sum_{j}c_{j}u_{j}$$
To calculate the coefficients $c_{j}$ we use Green's identities along with the Helmholtz equation. We have
$$c_{j}=\langle{}u,u_{j}\rangle=\frac{1}{E-E_{j}}\int_{\Omega}(\Delta{}u)u_{j}-u(\Delta{}u_{j})$$
$$=\frac{1}{E-E_{j}}\int_{\dOmega}\ud_{j} d_{n}u-\ud d_{n}u_{j}$$
$$=\frac{1}{E-E_{j}}\langle{}\ud_{j},G_{h}(1-h^{2}\Delta_{\dOmega})f\rangle=\frac{1}{E-E_{j}}\langle{}G_{h}(1-h^{2}\Delta_{\dOmega})\ud_{j},f\rangle$$
so
$$Z(E)=\sum_{j}\frac{u_{j}\langle{}G_{h}(1-h^{2}\Delta_{\dOmega})\ud_{j},\cdot\rangle}{E-E_{j}}$$
and
\begin{equation}
Z(E)^{\star}=\sum_{j}\frac{G_{h}(1-h^{2}\Delta_{\dOmega})\ud_{j}\langle{}u_{j},\cdot\rangle}{E-E_{j}}.
\label{ZEstar}\end{equation}

We will use the operator $Z(E)^{\star}$ to obtain the inclusion bound. First let $u_{J}$ be the eigenfunction corresponding to the closest $E_{J}$ to $E$. Then
$$Z(E)^{\star}u_{J}=\frac{1}{E-E_{J}}G_{h}(1-h^{2}\Delta_{\dOmega})u_{J}^{\dOmega}$$
therefore the lower bound of Section \ref{sec:lowerbound} tells us that
$$\norm{Z(E)^{\star}u_{J}}_{L^{2}(\dOmega)}\geq{}\frac{c}{E-E_{J}}=\frac{c}{d(E,\spec_{N})}.$$
Therefore
$$\frac{1}{(\min_{u}\tilde{t}_{h}[u])}\geq{}\frac{c}{d(E,\spec_{N})}$$
or
$$c\min_{u}\tilde{t}_{h}[u]\leq{}d(E,\spec_{N})$$
which is the lower bound in \eqref{inclusion-eval}. 

To obtain the upper bound, we obtain upper bounds on the operator norm of $Z(E)^{\star}$. To obtain these upper bounds, we notice that $Z^*(E)$ can be written in the form
$$
Z^*(E) = h^2 T (h^2 \Delta^N - 1)^{-1},
$$
where $T u = G_h(1 - h^2 \Deltab) \ud$, that is, $G_h(1 - h^2 \Deltab) $ applied to the restriction of $u$ to the boundary. 
We remind the reader that $\Delta^N$ is the Neumann Laplacian. 
To estimate the norm of $Z(E)$, we use a dyadic decomposition of $L^2(\Omega)$. Let $\Pi_0$ denote the spectral projection onto eigenspaces with $E_j \leq 4E$, and for $j \geq 1$, we let $\Pi_j$ denote the spectral projection onto eigenspaces with $E_j \in (4^j E, 4^{j+1} E]$. Thus $\sum_{j \geq 0} \Pi_j = \Id$. So we write 
$$
Z^*(E) = h^2 T (h^2 \Delta^N -1)^{-1} \Big(   \sum_{j=0}^\infty \Pi_j \Big),
$$
and we denote these pieces by $Z^*_{j}$ (now dropping the $E$ dependence in notation). We also observe that $\Pi_{j}$ commutes with $(h^2 \Delta^N - 1)^{-1}$, so that we can write 
$$
Z^*_{j} = h^2 T (h^2 \Delta^N - 1)^{-1}\Pi_{j}  = h^2 T \Pi_{j} (h^2 \Delta^N - 1)^{-1} = h^2 T \Pi_{j} (h^2 \Delta^N - 1)^{-1}  \Pi_{j} .
$$

Before we begin, we observe that we always have an inequality 
\begin{equation}
d(E, \spec^N) \leq C(\Omega) \sqrt{E}. 
\label{obs}\end{equation}
To see this, choose a smooth function $\phi$, compactly supported in $\Omega$, not identically zero, and choose a unit vector $\omega \in \RR^n$. Then $u = \phi(x) e^{i\sqrt{E} x \cdot \omega}$ is in the domain of $\Delta^N$ and satisfies $\| (\Delta - E)u \|_2 \leq C \sqrt{E} \| u \|_2$, which immediately proves \eqref{obs}.  

We first consider $Z^*_{0}$. 
By Proposition~\ref{prop:upperbound}, it suffices to check that 
\begin{equation}
h^2 \| (h^2 \Delta^N - 1)^{-1} w \|_{L^2(\Omega)} + h \|  w \|_{L^2(\Omega)} + h^2 \| \nabla w \|_{L^2(\Omega)} \leq C d(E, \spec^N)^{-1} \| w \|_{L^2(\Omega)}. 
\label{ddd}\end{equation}
The first term is bounded by $d(E, \spec^N) \| w \|_{L^2(\Omega)}$ by $L^2$ spectral theory. The second term is bounded by the RHS using \eqref{obs}, and for the third term,  we argue as in Section~\ref{sec:upperbound}, showing that 
$h \| \nabla w \|_2 \leq 2 \| w \|_2$ for $u$ in  the range of $\Pi_{0}$. 

Now consider $Z^*_j$. Here, we need to use the sharp form of Proposition~\ref{prop:upperbound}, given by \eqref{Tataru-bound-improv-2}. This tells us that the norm of $Z^*_j w$ is bounded  by $Ch^2$ times 
\begin{equation}\begin{gathered}
 \| (h^2 \Delta^N - 1)^{-1} w \|_{L^2(\Omega)} + h^{-1/2} \| (h^2 \Delta^N - 1)^{-1} w \|_{L^2(\Omega)}^{1/4} \| w \|_{L^2(\Omega)}^{3/4} +  \\
 \| (h^2 \Delta^N - 1)^{-1} w \|_{L^2(\Omega)}^{1/2} \|  w \|_{L^2(\Omega)}^{1/4}  \| \nabla w \|_{L^2(\Omega)}^{1/4} \\
 + \| (h^2 \Delta^N - 1)^{-1} w \|_{L^2(\Omega)}^{3/8} \| w \|_{L^2(\Omega)}^{3/8}  \| \nabla  w \|_{L^2(\Omega)}^{1/4}.
\end{gathered} %\ , \ w = (h^2 \Delta - 1) u .
\label{Tataru-bound-improv-3}\end{equation}
On the range of $\Pi_j$, we can estimate the norm of $(h^2 \Delta - 1)^{-1} w$ by $C 4^{-j} \| w \|_{L^2(\Omega)}$ and the norm of $\nabla w$ by $C h^{-1} 2^{j} \| w \|_{L^2(\Omega)}$. Using this, and \eqref{Tataru-bound-improv-3}, we find that the operator norm of $Z^*_j $ is bounded  by 
$$
Ch^2 \Big( 4^{-j} + h^{-1/2} 2^{-j/2} + 2^{-j} h^{-1/4} 2^{j/4} + 2^{-3j/4} h^{-1/4} 2^{j/4} \Big) \leq C h^{3/2} 2^{-j/2}.
$$
Summing over $j$, we find that the norm of $\sum_{j \geq 1} Z_j$ is bounded by $C h^{3/2}$, which by \eqref{obs} is bounded by $C d(E, \spec^N)^{-1}$. (In this estimate we have  half a power of $h$ to spare. But the key point here is not the power of $h$, but the summability in $j$.)

Combining this with the estimate for $Z_0$ we find that the norm of $Z(E)^*$ is bounded by a constant times $ d(E, \spec^N)^{-1}$, which completes the proof. 
\end{proof}

\begin{proof}[Proof of \eqref{inclusion-efn}]
Using similar algebraic manipulations as above, we find that 
$$\begin{gathered}
u - \Pi_J u = \sum_{j \neq J} (u, u_j) u_j \\
= \sum_{j \neq J} \frac{\langle d_n u, \ud_j  \rangle  }{E - E_j} u_j \\
= \sum_{j \neq J} \frac{\big\langle F_h(1 - h^2 \Deltab)(d_n u) , G_h(1 - h^2 \Deltab) \ud_j \big\rangle }{E - E_j} u_j \\
= ( \Id - \Pi_J) \circ Z(E) \Big(F_h(1 - h^2 \Deltab)(d_n u)\Big).
\end{gathered}$$
Therefore,
\begin{equation}\begin{aligned}
\frac{\| u - \Pi_J u \|_{L^2(\Omega)}}{\| u \|_{L^2(\Omega)}} &\leq \big\| ( \Id - \Pi_J) Z(E) \big\|_{L^2(\dOmega) \to L^2(\Omega)} \frac{ \| F_h(1 - h^2 \Deltab)(d_n u) \|_{L^2(\dOmega)}}{\| u \|_{L^2(\Omega)}} \\ &=  \big\| Z^*(E) ( \Id - \Pi_J) \big\|_{L^2(\Omega) \to L^2(\dOmega)} \tttm[u].
\end{aligned}\label{opnormest}\end{equation}
So it remains to estimate the operator norm of $Z^*(E) ( \Id - \Pi_J)$. But this is done just as above. Because we have eliminated the term with $j=J$, we get the estimate above but with the distance from $E$ to the nearest point in the spectrum, $E_J$, replaced by the next nearest point, $E_*$. Thus we get 
$$
\big\| Z^*(E) ( \Id - \Pi_J) \big\|_{L^2(\Omega) \to L^2(\dOmega)}  \leq \frac{C}{|E - E_*|},
$$
and together with \eqref{opnormest} this completes the proof of \eqref{inclusion-efn}. 

\end{proof}

\begin{remark}\label{melissa}
Note that if $E$ is significantly closer (by a factor dependent on the constants in Proposition \ref{prop:upperbound} but not on $h$) to one $E_{J}$ than any other, the norm of $Z^{\star}(E)$ will be dominated by the norm of the $J$th term of the sum \eqref{ZEstar}. Indeed, in this case we have, with $E = h^{-2}$, 
$$d(E, \spec^N) \ll C(\Omega) \sqrt{E}. 
$$
Also if $P^{\perp}_{J}$ is the projection onto  the orthogonal complement of the $J$th eigenspace (i.e. projection onto the other eigenspaces)
$$
h^{2}\norm{(h^{2}\Delta^{N}-1)^{-1}P^{\perp}_{J}}_{L^{2}(\Omega)\to{}L^{2}(\Omega)} \ll d(E, \spec^N)^{-1}.$$
Combining with \eqref{ddd} this then gives
$$
\norm{Z^{\star}(E)P^{\perp}_{J}}_{L^{2}(\Omega)\to{}L^{2}(\partial\Omega)} \ll d(E, \spec^N)^{-1} $$
which, along with the lower bound implies that the $J$th term dominates the norm of $Z^{\star}(E)$. Consequently, near $E_J$, the minimum modified tension
$\tttm[\umin]$
is approximately $c_J|E - E_J|$, where the magnitude of the slope is
$c_J = \| G_h(1 - h^2 \Deltab) \ud_J \|^{-1}$.
This local `absolute value' functional form
is apparent in Figure \ref{f:tsweep}, along with the similarity of the
slope magnitudes $c_J$ ensured by Theorem~\ref{Neu-est}.
\end{remark}

% NNNNNNNNNNNNNNNNNNNNNNNNNNNNNNNNNNNNNNNNNNNNNNNNNNNNNNNNNNNNNNNNNNNNNNNN
\section{Numerical application of inclusion bounds}
\label{s:num}

% Qus: is n too confusing as basis func index?

Given a trial function $u$ and energy $E=h^{-2}$
such that the Helmholtz equation $(\Delta - E)u =0$ holds
in the domain $\Omega$,
the upper bound in Theorem~\ref{MPS-Neu}
gives a bound on the distance of $E$ to the true Neumann spectrum of the domain.
%If this distance is small,
One may interpret $E$ as an approximation to a Neumann eigenvalue $E^N_j$,
and the bound $C\tilde{t}_h[u]$ as an upper
bound on the numerical error in this approximation.
Note that such a trial pair $(u,E)$ can be produced by
a variety of `global approximation' methods including the method of particular solutions
(MPS)
\cite{fhm,EnNeu,mps,Antunes2d,gsvd} and potential theoretic (boundary integral equation)
representations
\cite{coltonkress,Duran01,ungerbook,helsing_axi,Akhmet15}.
We now present and test a high-accuracy MPS algorithm
based on the modified tension \eqref{tttm-def}, which
uses boundary data alone to handle the interior norm.

\begin{figure}[t] % fffffffffffffffffffffffffffffffffffffffffffffffffffffffff
\includegraphics[width=3in]{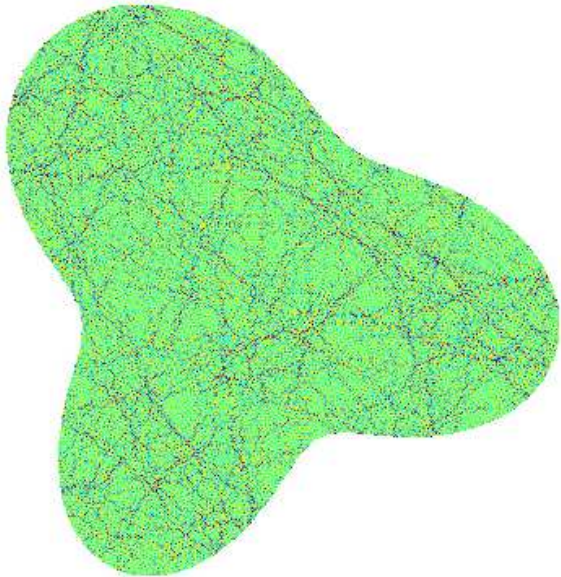}
\caption{Neumann eigenmode of the nonsymmetric smooth planar domain
of Fig.~\ref{f:intro}(b),
with eigenfrequency $\sqrt{E_j} = 405.003269518228\cdots$.
This is around the 42612th eigenvalue of the domain.
As before, the colour scale is
red positive, green zero, and blue negative.
\label{f:high}
}
\end{figure}

\subsection{High-accuracy implementation of the method of particular solutions}
\label{s:mps}

We consider the smooth planar domain shown in Fig.~\ref{f:intro}(b),
whose boundary is given in polar coordinates by
$r(\theta) = 1 + 0.3 \cos[3(\theta+ 0.2\sin\theta)]$.
We parametrize using $\theta$, ie the boundary curve in $\RR^2$ is
$x(t) = (r(t)\cos t, r(t)\sin t)$ for $t\in[0,2\pi)$.
The `method of fundamental solutions' (MFS) \cite{Bo85,Antunes2d,mfs}
provides a convenient and accurate representation for $E$-Helmholtz solutions
with smooth boundary data in a domain with smooth boundary.
Namely, we approximate
\be
u(x) \;\approx\; \sum_{n=1}^N \alpha_n \phi_n(x), \qquad x\in\Omega ~,
\label{basis}
\ee
where the fundamental solution (point source) basis functions are
\be
\phi_n(x) = Y_0(\sqrt{E}|x-y_n|), \qquad n=1,\dots,N
\label{mfs}
\ee
where $Y_0$ is the irregular Bessel function of order zero
(we choose real fundamental solutions for numerical speed).
The points $\{y_n\}_{n=1}^N$ must lie outside of $\Omega$, and their location
is reasonably important if high accuracy is desired \cite{mfs}.
We assign these points via a negative imaginary shift by $\tau$
of the boundary parametrization (a simplification of a method in \cite{mfs}):
%Specifically,
associating $\RR^2$ with $\CC$ and analytically continuing $x(t)$
off the real $t$ axis, then $y_n =  x(2\pi n/N - i\tau)$, $n=1,\dots,N$.

At a given energy $E$ we seek the minimum modified tension
\eqref{tttm-def} over trial functions of the form \eqref{basis},
namely
\be
\tilde{t}_{h,\tbox{min}} \;:=\;
\min_{u\in \Span \{ \phi_j\}} %u \not\equiv 0 \mbox{ in } \Omega}
\tttm[u] \;:=\;
\min_{u\in \Span \{ \phi_j\}}
\frac{ \| F_h(1-h^2 \Deltab)d_n u \|_{L^2(\dOmega)}}{ \| u \|_{L^2(\Omega)}}.
\label{tttmin}
\ee
For this we will adapt the GSVD method of Betcke \cite[Sec.~6]{gsvd}
to include both the spectral weight and the correct interior norm.

We first need a quadrature rule on $\pO$:
we choose the $M$-node periodic trapezoid rule
\be
\int_\pO f\, ds \; \approx \; \sum_{m=1}^M w_m f(x_m)
\label{ptr}
\ee
with boundary nodes $x_m = x(2\pi m/M)$ and weights
$w_m = 2\pi|x'(2\pi m/M)|/M$, which is
exponentially convergent for $f$ periodic and analytic \cite{davis59};
in practice for oscillatory integrands around 6 points per wavelength
are needed for full machine accuracy.
Then, given a vector $\alpha := \{\alpha_n\}_{n=1}^N$ of basis coefficients
in \eqref{basis},
$\| F_h(1-h^2 \Deltab)d_n u \|_{L^2(\dOmega)} \approx \| F A_\tbox{nor} \alpha\|_{l^2}$
where $A_\tbox{nor}$ is a $M$-by-$N$ matrix with elements
$$
(A_\tbox{nor})_{mn} = \sqrt{w_m} \, d_n \phi_n(x_m)~,
$$
while $F$ is an $M$-by-$M$ matrix that approximates the action of the operator
$F_h(1-h^2 \Deltab)$ on a boundary function sampled at the nodes.

We fill $F$ as follows.
Since the smoothness of cutoffs in \eqref{Gh} is not necessary
for the upper bound, we use a simpler choice of inverse spectral
weight function, $F_h(\sigma) = 1/\max[ \sigma^{1/2}_+, h^{1/3}]$.
At high boundary frequencies $|\xi'|>1$ , $F_h(1-\xi'^2)$ acts as
$h^{-1/3} \Id$, and we use projection onto a Fourier basis to
approximate the lower frequencies.
Let $P_{mn} = e^{2\pi i n s_m/L}$ for $m=1,\ldots,M$, $|n|\le n_\tbox{max} = M/4$,
be the matrix whose columns are the Fourier basis on the boundary,
where $L = |\pO|$,
and  $s_m$ is a spectral approximation to the arclength function at the
nodes, which can be computed simply using the discrete Fourier transform
of the ``speed'' vector $\{|x'(2\pi m/M)|\}_{m=1}^M$.
The $n$th Fourier mode has scaled frequency $\xi'_n = 2\pi n h / L$.
Then we evaluate $F$ via filtering of the Fourier series coefficients,
$$
F \; =\; h^{-1/3} I_M  \;+\; 
P \cdot \diag\{F_h(1-(\xi'_n)^2) - h^{-1/3}\}_{|n|\ge n_\tbox{max}} \cdot P^\ast \cdot
\diag\{w_m/L\}_{m=1}^M
~,
$$
where $I_M$ is the $M$-by-$M$ identity matrix.
The result is spectrally accurate.

In the literature, the denominator in \eqref{tttmin} is often
crudely approximated via
interior points \cite{mps,gsvd,Akhmet15}. We propose that
it is much more accurately
evaluated on the boundary using the identity \cite[Lemma~3.1]{que}
which applies to any $E$-Helmholtz function $u$,
\be
\| u\|^2_{L^2(\Omega)} \; = \; \frac{1}{2E}
\int_\pO (x\cdot n) \big(E|u|^2 + |d_n u|^2 - |d_t u|^2\big)
+ 2 \Re (x \cdot \nabla \overline{u}) d_n u \; ds
~,
\label{neuintnrm}
\ee
where $d_t$ denotes tangential derivative.
Since it is a bilinear form, $\| u\|^2_{L^2(\Omega)} = \alpha^\ast H \alpha$
for some matrix $H$.
Applying the quadrature \eqref{ptr} to \eqref{neuintnrm} gives
$$
H \approx \frac{1}{2E} \left( E A^\ast X_\tbox{nor} A +
A_\tbox{nor}^\ast X_\tbox{nor} A_\tbox{nor} - A_\tbox{tan}^\ast X_\tbox{nor} A_\tbox{tan}
+ A_\tbox{dil}^\ast A_\tbox{nor} + A_\tbox{nor}^\ast A_\tbox{dil}
\right)~,
$$
where the elements of the basis matrices are
$$
A_{mn} = \sqrt{w_m}\phi_n(x_m)~,
\quad (A_\tbox{tan})_{mn} = \sqrt{w_m}d_t \phi_n(x_m)~,
\quad (A_\tbox{dil})_{mn} = \sqrt{w_m}x_m \cdot \nabla \phi_n(x_m)
$$
and the diagonal matrix
$X_\tbox{nor} = \diag \{x_m\cdot n_m \}_{m=1}^M$, 
where $n_m$ is the normal at the $m$th node,
encodes the boundary function $x\cdot n$.

In order to apply the GSVD method, a `square-root matrix' $B$ such that
$H = B^\ast B$ is needed, so that $\|u\|_{L^2(\Omega)}\approx \|B\alpha\|_{l^2}$.
We build such a $B$ as follows.
$H$ is formally positive definite, but round-off error
in its construction means that this does not hold numerically.
Thus we diagonalize $H = V \Lambda V^\ast$, where $\Lambda = \diag\{\lambda_j\}_{j=1}^N$, then remove the columns of $V$ and $\Lambda$ for which
$\lambda_j < \eps_H \lambda_1$, where $\lambda_1$ is the largest eigenvalue,
and $\eps_H=10^{-12}$ is a cut-off not too close to machine error.
Finally we set $B = \sqrt{\Lambda} V^\ast$.

With the above matrices filled, and writing $A = F A_\tbox{nor}$,
we have for any $u$ given by a coefficient vector $\alpha$ the
tension approximation
\be
\tilde{t}_h[u] \;\approx \; \frac{\|A \alpha\|_{l^2}}{\|B \alpha\|_{l^2}}~.
\label{ratio}
\ee
Minimization of this ratio over $\alpha$ is performed by taking the
{\em generalized singular value decomposition} (GSVD)
\cite[Sec.~8.7]{golubvanloan} \cite{gsvd}.
However, since both $A$ and $B$ are usually rank-deficient, the following
regularization is needed (a simplification of that in \cite[Sec.~6]{gsvd}).
At each $E$, one first takes the SVD of $[A;B]$
(this notation indicating $A$ stacked on top of $B$),
which factorizes $[A;B] = Q \Sigma W^\ast$, where
as usual $Q$ and $W$ are unitary and $\Sigma = \diag \{\sigma_j\}_{j=1}^N$.
One keeps only the first $r_\eps$ columns of $Q$, $\Sigma$, and $W$,
where the numerical rank is $r_\eps := \#\{ j : \sigma_j \ge \eps\sigma_1\}$;
we choose the cut-off $\eps = 10^{-14}$ close to machine precision.
$Q$ is now an orthonormal basis for the numerical column space of $[A;B]$.
Splitting $Q = [Q_A;Q_B]$, we then take%
\footnote{In fact, since $Q$ has orthonormal columns, only the CS decomposition part of the GSVD is needed \cite[Sec.~8.7]{golubvanloan}.}
the GSVD of the pair $(Q_A,Q_B)$,
which decomposes $\tilde{U}^\ast Q_A X = C$ and $\tilde{V}^\ast Q_B X = S$,
where $\tilde U$, $\tilde V$ are unitary,
$C = \diag \{c_j\}_{j=1}^N$, and $S = \diag \{s_j\}_{j=1}^N$, with $c_j^2+s_j^2 =1$,
while the coefficient matrix $X$ is $r_\eps$-by-$r_\eps$ and nonsingular.
The generalized singular values are $|c_j/s_j|$;
let the index $j$ for which this is a minimum be
$j_\tbox{min}$.
Then, at this $E$, the minimum tension $\tilde{t}_h[u_\tbox{min}]$
is approximated by
\be
%\tilde{t}_h[u_\tbox{min}] \;\approx\;
\tilde{t}_{h,\tbox{min}}(E) := 
\min_{\alpha\in\CC^N, \alpha\neq 0} \tilde{t}_h[u] \;\approx \;
\min_{\alpha\in\CC^N, \alpha\neq 0} \frac{\|A \alpha\|}{\|B \alpha\|}
\; = \; \frac{|c_{j_\tbox{min}}|}{|s_{j_\tbox{min}}|}
~.
\label{thmin}
\ee
The $j_\tbox{min}$th column of $X$ contains%
\footnote{Note that the definition of $X$ in MATLAB is the inverse
transpose of that in \cite[Sec.~8.7]{golubvanloan}.}
the vector
$\beta_\ast$ which minimizes $\|Q_A \beta\|/\|Q_B \beta\|$.
Transforming back to the original basis gives the
minimizing vector $\alpha_\ast = W \Sigma^{-1} \beta_\ast$.

Fig.~\ref{f:tsweep} shows resulting graphs of the function
$\tilde{t}_{h,\tbox{min}}(E)$ whose near-zeros must now be found
in order to locate approximate Neumann eigenvalues.
The graph takes the form of a (slightly rounded) absolute-value function
in the neighbourhood of each minimum.
Thus we minimize
%of \eqref{thmin} is now done as a function of $E$,
using a combination of evaluation on a coarse grid
and iteratively fitting a parabolic approximation
to the square of $\tilde{t}_{h,\tbox{min}}(E)$ at three ordinates,
until convergence in $E$ is achieved,
as described in \cite[App.~B]{sca}.
%by iteratively fitting a surrogate
%parabola function to the square of the tension,
%replacing the most distant ordinate by the location of the
%parabola's minimum.
Typically, 10--15 evaluations of \eqref{thmin}
%(iterations)
are needed to locate each isolated minimum in $E$
to 14-digit accuracy.
At each minimum, the coefficient vector $\alpha_\ast$ is
inserted into \eqref{basis} to evaluate a trial function $u$.

\begin{remark}
We observe that, numerically,
the $E$ at which the tension is minimized and the resulting trial function $u$
depend very little on the choice of tension function used,
so it is possible to use, say, the classical tension \eqref{tclas}
for minimization, and switch to the modified tension \eqref{tttm-def} solely
for computing the final inclusion bound.
However, since it produces more uniform slopes of the tension graphs,
and requires very little extra effort to compute, we used
the modified tension throughout.
\end{remark}

\begin{table}[t] % tttttttttttttttttttttttttttttttttttttttttttttttttttttttttttt
\begin{tabular}{|llllllll|}
\hline
frequency $\sqrt{E_j}$ & $M$ & $N$ & $\tau$ & evals & time &
$\eclas/E$ & $\enew/E$
\\
\hline
40.5128219950085 & 700 & 350 & 0.025 & 13 & 8 s &
$7.4\times 10^{-12}$ &
$3.2 \times 10^{-15}$
\\
405.003269518228 & 5000 & 2500 & 0.004 & 11 & 625 s &
$1.0\times 10^{-9}$ &
$1.0\times 10^{-14}$
\\
\hline
\end{tabular}
\vspace{3ex}
\caption{Parameters, timing and error bounds
for two Neumann eigenvalues $E_j$ computed as in Section~\ref{s:mps}.
The first row relates to the mode shown in Fig.~\ref{f:intro}(b), 
the second to the mode in Fig.~\ref{f:high}.
$M$, $N$ and $\tau$ are numerical parameters. ``evals'' is the number of
evaluations of \eqref{thmin} used to iteratively
find the minimum, taking the total CPU time shown.
$\eclas$ is the classical error bound \eqref{eclas}
for the distance from $E$ to the Neumann spectrum $\spec^N$, whereas
$\enew$ is the new proposed bound \eqref{enew},
using the constant $C=C_\tbox{est}=1.6$ in Theorem~\ref{MPS-Neu}.
Thus the last two columns show relative error bounds in eigenvalue.
\label{tbl}
}
\end{table} % tttttttttttttttttttttttttttttttttttttttttttttttttttttttttttttttt

\subsection{Eigenvalue inclusion results}

We now turn to numerical results, summarized in Table~\ref{tbl}.
All computations were performed on a laptop with a quad-core i7-3720QM
2.6 GHz processor and 16 GB RAM,
running ubuntu linux, mostly in MATLAB (version R2012a).
Numerical parameters $M$, $N$, and $\tau$ giving sufficient accuracy
were found by
trial and error (with convergence in $M$ verified for fixed $N$ and $\tau$),
although it would not be hard to make these choices adaptive.
The table has two rows, corresponding to a medium and a high frequency.
The iterative minimization was started from 5 values equally
spaced in frequency intervals $\sqrt{E_j} \in [40.50, 40.55]$
and $\sqrt{E_j} \in [405, 405.005]$ respectively;
each interval is of width roughly the mean eigenfrequency spacing,
and contains a single eigenvalue.

For each trial parameter $E$ and trial function $u$ found, we compare
the classical Neumann inclusion bound to our proposed inclusion bound.
For the former, the best seem to be those of Ennenbach \cite[Thm.~7]{EnNeu},
which do not place restrictions on $u$.
This requires computing the `classical' Neumann tension
\be
\tilde{t}[u] = \frac{\| d_n u\|_{L^2(\pO)}}{\|u\|_{L^2(\Omega)}}
~,
\label{tclas}
\ee
which can be done as in Sec.~\ref{s:mps} except setting $F=I_M$.
Since $u$ is Helmholtz,
in the limit of vanishing $\|d_n u\|_{L^2(\pO)}$, Ennenbach's result simplifies to
\be
d(E,\spec^N) \;\le\; C_\tbox{Ennenbach} E \tilde{t}[u]
\;:=\; \eclas[u,E] ~.
\label{eclas}
\ee
Bounds for $C_\tbox{Ennenbach}$ are given in \cite{EnNeu}
in terms of the 2nd eigenvalue of a certain biharmonic
Stekloff problem, and geometric properties of $\Omega$.
Since our domain is star-shaped, the Stekloff eigenvalue
is bounded by geometric constants as in \cite[Sec.~3]{EnNeu},
giving, after a somewhat involved calculation, $C_\tbox{Ennenbach} = 7.4$.

From Theorem~\ref{MPS-Neu}, our proposed new upper bound is
\be
d(E,\spec^N) \;\le\; C \tttm[u] \;:=\; \enew[u,E] ~.
\label{enew}
\ee

\begin{remark}
Unlike in the Dirichlet case \cite{bnds},
the analysis in the present paper does not provide any explicit values for the
constant $C$ in Theorem~\ref{MPS-Neu}.
However, the measured slope magnitudes in the graphs of Fig.~\ref{f:tsweep}
lie in the very narrow range $[0.646, 0.676]$, strongly suggesting that
$C_\tbox{est} = 1.6$ (being an upper bound on the reciprocal) is
a valid choice for the constant. Thus we use this constant for our quoted
bounds.
\end{remark}

In the final two columns of Table~\ref{tbl} we compare
the {\em relative} eigenvalue errors implied by the two inclusion
bounds, i.e.\ $\eclas/E$ and $\enew/E$.
In the first row of Table~\ref{tbl} (matching the mode of Fig.~\ref{f:intro}(b)),
$E_j$ is around $1.6 \times 10^3$, corresponding
to eigenvalue number around $405$ (estimated via the 2-term Weyl
asymptotic).
The new bound improves over the classical bound by over 3 digits,
improving the relative error in $E_j$ from around 11 to close to 15 digits.
The high frequency case (second row, matching the mode of Fig.~\ref{f:high})
has $E_j$ around $10^2$ times higher,
and the new bound improves over the classical one by 5 digits,
improving the relative error in $E_j$ from 9 digits to 14 digits.
All digits for $\sqrt{E_j}$ given in the table are believed to be correct.

\begin{remark}
We observe that the error function $d_n u$
evaluated on the boundary nodes is dominated by uncorrelated
`noise', indicating that rounding error is the limiting factor
rather than limitations of the MFS basis representation.
Thus the frequency content of $d_n u$ falls mostly outside
of the classically allowed region, so the numerator of the tension
\eqref{tttm-def} 
is essentially $h^{-1/3} \|d_n u\|_{L^2(\pO)}$.
Note that by introducing a domain-dependent constant into the
second term of \eqref{Gh}, one might be able to improve the constant
in the bounds.
\end{remark}

Fig.~\ref{f:high} shows the corresponding
eigenmode in the high frequency case.
This is evaluated from the coefficient vector $\alpha_\ast$
by summing the basis representation \eqref{basis}--\eqref{mfs}
on a grid of 819555
points lying inside $\Omega$ in only 12.5 seconds
using the Helmholtz fast multipole (FMM) implementation
of Gimbutas--Greengard \cite{HFMM2D}.

% Discussion / future:

Finally, we remind the reader that there are several global approximation
methods that, since they produce Helmholtz trial functions $u$,
can benefit from the inclusion bounds of Theorem~\ref{MPS-Neu}.
This includes the MPS, boundary integral equations, and
recent work that involves a hybrid of the two
for mixed boundary conditions \cite{Akhmet15}.
Much faster global approximation methods exist for the Dirichlet case,
by exploiting Fredholm determinants \cite{zhaodet},
or (for star-shaped domains) Neumann-to-Dirichlet operators
\cite{v+s,que,sca}.
These methods have yet to be adapted to the Neumann case.
It would be interesting to try to extend the
present analysis to domains with corners, mixed or Robin boundary conditions,
and to domain decomposition methods such as \cite{timodd}.

% could get bnds on norm of difference of u from corresp u_j^N in
% non-degenerate case, or eigenspace.  We did this in Dir case.

Code implementing the algorithms of this section,
and producing all figures in this paper,
are available in the
\verb+examples/neumann_inclusion+ directory of the
{\tt MPSpack} toolbox for MATLAB,
which can be downloaded at at the following URL:

\url{https://github.com/ahbarnett/mpspack}

% BBBBBBBBBBBBBBBBBBBBBBBBBBBBBBBBBBBBBBBBBBBBBBBBBBBBBBBBBBBBBBBB
\bibliographystyle{abbrv} 
\bibliography{alex}
% Alex's bib key:
% GL -> gerard93
% HaZe -> hassell
% Ta -> tataru

\end{document}